\newtheorem{theorem}{Theorem}[section]
\newtheorem{prop}[theorem]{Proposition}
\newtheorem{question}[theorem]{Question}
\newtheorem{definition}[theorem]{Definition}
\newtheorem{cor}[theorem]{Corollary}
\DeclareMathOperator{\C}{\mathbb C}
\DeclareMathOperator{\R}{\mathbb R}
\begin{document}

\title[From Smooth to Almost Complex]{From Smooth to Almost Complex}

\author{Weiyi Zhang}
\address{Mathematics Institute\\  University of Warwick\\ Coventry, CV4 7AL, England}
\email{weiyi.zhang@warwick.ac.uk}

\begin{abstract} 
This article mainly aims to overview the recent efforts on developing algebraic geometry for an arbitrary compact almost complex manifold. 

We review the results obtained by the guiding philosophy that a statement for smooth maps between smooth manifolds in terms of  Ren\'e Thom's transversality should also have its counterpart in pseudoholomorphic setting without requiring genericity of the almost complex structures.  
These include intersection of compact almost complex submanifolds, structure of pseudoholomorphic maps, zero locus of certain harmonic forms, and eigenvalues of Laplacian. In addition to reviewing the compact manifolds situation, we also extend these results to orbifolds and non-compact manifolds. Motivations, methodologies, applications, and further directions are discussed.

The structural results on the pseudoholomorphic maps lead to a notion of birational morphism between almost complex manifolds. This motivates the study of various birational invariants, including Kodaira dimensions and plurigenera, in this setting.  Some other aspects of almost complex algebraic geometry in dimension $4$ are also reviewed. These include cones of (co)homology classes and subvarieties in spherical classes.

\end{abstract}
 \maketitle

\tableofcontents
\section{Introduction}
An almost complex structure $J$ on a smooth manifold $M$ is a linear complex structure on each tangent space, which varies smoothly on $M$. In other words, $J: TM\rightarrow TM$ is an endomorphism of the tangent bundle such that $J^2=-1$. The pair $(M, J)$ is called an almost complex manifold. The geometry of almost complex manifolds has been an important subject of differential geometry since early last century. It contains many outstanding problems, from the famous question on the (non-)existence of complex structures on $S^6$ and Yau's question whether an almost complex manifold admits an integrable complex structure when complex dimension is greater than two \cite{Yau}, Kodaira-Spencer's question on almost Hermitian Hodge numbers that are independent of the metric \cite{Hir} \footnote {This question is recently solved by Tom Holt and the author in \cite{HZ}.}, and Goldberg's conjecture that every complex almost K\"ahler-Einstein manifold is K\"ahler \cite{Gold}, to more recently Donaldson's tamed to compatible question in dimension  $4$ \cite{Don06}. Geometry of almost complex manifolds constitutes one out of three sections in Hirzebruch's 1954 problem list on differentiable and complex manifolds \cite{Hir}. 

A pseudoholomorphic curve, first studied in \cite{NW},  is a smooth map from a Riemann surface into an almost complex manifold $(M, J)$ that satisfies the Cauchy-Riemann equation. Gromov \cite{Gr} first introduced this notion as a fundamental tool to study global properties of symplectic manifolds. It has since revolutionized the field of symplectic topology and greatly influenced many other areas such as algebraic geometry, string theory, and $4$-manifolds. In his Abel Prize interview \cite{RS}, Gromov says that the introduction of pseudoholomorphic curves is unquestionably the particular result he is most proud of.  

The theory of pseudoholomorphic curves behaves very much like the curves theory for algebraic varieties, in particular when $J$ is tamed by a symplectic form or almost K\"ahler, {\it e.g.} in the Gromov-Witten theory. One key reason for that is almost complex structures are rigid objects and pseudoholomorphic curves are governed by an elliptic equation. Hence, it is reasonable to expect theories of almost complex manifolds being parallel to complex algebraic geometry, at least when we have tameness or almost K\"ahler assumption. However, this connection is not so straightforward, as in fact people used to view objects in these two sides from different perspectives. 

Majority of research in the theory of pseudoholomorphic curves,  like the Gromov-Witten theory, takes the ``mapping into" viewpoint.  On the other hand, in the complex setting, most classical research takes the  ``mapping out" viewpoint: namely, a complex submanifold or subvariety is by definition locally the zero locus of analytic functions in terms of complex coordinates. However, this does not work for a general almost complex manifold $(M, J)$, as the image of a $J$-holomorphic curve (we call it a $J$-holomorphic $1$-subvariety) is in general not the zero locus of pseudoholomorphic sections of any complex line bundle over $M$, even locally. 

Hence, we define $J$-holomorphic subvarieties using the ``mapping into" viewpoint.  A $J$-holomorphic subvariety in an almost complex manifold $(M, J)$ is a finite set of pairs $\{(V_i, m_i), 1\le i\le m\}$, where each $V_i$ is an irreducible $J$-holomorphic subvariety and each $m_i$ is a positive integer. Here an irreducible $J$-holomorphic subvariety is the image of a somewhere immersed pseudoholomorphic map $\phi: X\rightarrow M$ from a compact connected smooth almost complex manifold $X$. In particular, a zero-dimensional subvariety is a collection of finitely many points with positive weights (or multiplicities). We call a subvariety an $n$-subvariety if all $V_i$ have Hausdorff dimension $n$. We write the push forward of the fundamental classes of $V_i$ under $\phi_i$ as $e_{V_i}$ (or $[V_i]$ if there is no confusing). 

However, to develop algebraic geometry for almost complex manifolds, we also need to study the ``mapping out" approach. As we have seen in the complex setting, it is essentially the intersection theory of almost complex submanifolds. 

In the smooth category, we  have intersection theory, but it behaves best only when we assume Thom's transversality. Smooth and complex intersection theories are different from each other in two aspects. First, in smooth category, things work under certain genericity assumptions, one most important version of these is Thom's transversality. Meanwhile, in complex analytic setting, the theory works for an arbitrary complex structure. Second, intersection theory in complex category has ``positivity". Namely, the intersection of any two complex subvarieties is a complex subvariety (possibly containing many components with different dimensions) with the right orientation. However, this is not true for intersection of two smooth submanifolds even if the intersection is a smooth manifold. The simplest example is that the zeros of a holomorphic function have positive multiplicities, while the multiplicity of an isolated zero of a smooth function could be any integer. 

Almost complex category shares both features with complex category when we look at intersections of pseudoholomorphic curves in a $4$-dimensional almost complex manifold. It is a fundamental result in the theory of pseudoholomorphic curves that the intersection of distinct irreducible $J$-holomorphic curves is constituted of isolated intersection points with positive local intersection index  \cite{Gr, McD, MW}. 

The above circle of thinking motivates the following philosophy \cite{ZCJM}

\medskip

{\it a statement for smooth maps between smooth manifolds in terms of  R. Thom's transversality should also have its counterpart in pseudoholomorphic setting without requiring the transversality or genericity, but using the notion of pseudoholomorphic subvarieties, in particular when such a statement is available in complex analytic setting.}

\medskip

\noindent 

This paper will review a few, among many more, such directions guided by this philosophy, as well as some applications of these results. These directions include intersection of compact almost complex submanifolds, structure of pseudoholomorphic maps, zero locus of certain harmonic forms, and eigenvalues of Laplacian. To give the readers a comprehensive introduction, we explain the motivations, the connections with other fields, and further directions.

Some other aspects of almost complex algebraic geometry are also reviewed. The structural results on the pseudoholomorphic maps suggest degree one pseudoholomorphic maps are birational morphisms between almost complex manifolds. As a step to develop birational geometry for almost complex manifolds, we define and study of various birational invariants, including Kodaira dimensions and plurigenera, in this setting.  Some other aspects of almost complex algebraic geometry for almost complex $4$-manifolds are also reviewed. These include Mori's cone theorem, the Nakai-Moishezon and the Kleiman type dualities, and subvarieties in sphere classes.  We also give some very preliminary discussions on the  connections with Ruan's symplectic birational geometry \cite{Ru, LR}. 

Another goal of this paper is to provide a toolbox and a taster how these tools are working together to study algebraic geometry of almost complex manifolds. Hence, in addition to introducing results, we also explain the main ideas in the proofs and how these can be used to tackle other problems.  Although our original papers, {\it e.g.} \cite{ZCJM, CZ, CZ2, BonZ}, mainly focus on the compact manifolds without boundary (except in the intersection theory the ambient manifold is allowed to be non compact), our techniques apply also to orbifolds and non-compact manifolds. Some of these generalizations are discussed, although in many cases we do not attempt to phrase the most optimal statements. 

The author would like to thank the referee for suggestions to improve the exposition of the paper.

\section{Intersection theory of almost complex submanifolds}
We start this section with a toy model already mentioned in the introduction, {\it i.e.} the multiplicity of zeros for a continuous function $u:D^2\rightarrow \mathbb R^2$ from the open unit disk $D^2$ that generalizes the multiplicity of zeros of a holomorphic function. First, to make sense of it, we need to assume $u$ has isolated zeros, or more generally, $u$ is admissible, which means $\overline{u^{-1}(0)}\cap \partial D^2=\emptyset$. For example, the function $u(x, y)=x$ is not admissible, while all non-trivial (anti-)holomorphic functions and the function $u(z)=|z|^2$ are admissible. For an admissible function $u$, we can define the ``sum" of multiplicities of its zeros in $D^2$, $I(u)$. 

The multiplicity $I(u)$ should satisfy the following $5$ properties.

\begin{itemize}
\item {\bf Zero}: $I(u)=0$ if $u(a)\ne 0, \forall a\in D^2$.

\smallskip
\smallskip

\item {\bf Invariance under homotopy}: If $u_0, u_1: D^2\rightarrow \R^2$ are admissible and homotopic via an admissible family $u_t$, then $I(u_0)=I(u_1)$.

\smallskip
\smallskip

\item {\bf Degree $k$ map}: If $\theta: D^2\rightarrow D^2$ is a proper degree $k$ map, then $I(u\circ \theta)=k\cdot I(u)$. 
\smallskip
\smallskip

\item {\bf Additivity}: If all the zeros are included in disjoint union $\cup_iD_i \subset D^2$ where each $D_i=\theta_i(D^2)$ with embedding $\theta_i: D^2\rightarrow D^2$, then $I(u)=\sum I(u\circ \theta_i)$.

\smallskip
\smallskip

\item {\bf Generalization}: If $u$ is holomorphic, $I(u)$ is the usual multiplicity of zeros for holomorphic functions. 
\end{itemize}
  
We can define a version of such $I(u)$ using Jacobian $ J_u=\begin{pmatrix} \frac{\partial u^1}{\partial x}&  \frac{\partial u^1}{\partial y} \\
\frac{\partial u^2}{\partial x} & \frac{\partial  u^2}{\partial y}
\end{pmatrix}.$
 We perturb $u$ to an admissible function $\tilde u: D^2 \rightarrow \mathbb R^2$ such that the Jacobian of each zero of $\tilde u$ is non-degenerate. 
 Then $I(u)$ is defined as the sum of the signs of $\det J_{\tilde u}$ at each zero of $\tilde u$. The multiplicity $I(u)$ is independent of the choice of the perturbation $\tilde u$. In fact, this is the only possible $I(u)$ which satisfies all the five natural properties above (see \cite{BonZ}).  
 
It is easy to check that for the function $u(z)=|z|^2$, the multiplicity $I(u)=0$. 

From the above discussion, the zero locus of a function $u$, which could be viewed as the intersection of the graph of $u$ in the trivial rank $2$ bundle with its zero section, is a perfect toy model for our philosophy. First, for a continuous function, only a generic function has isolated zeros. In fact, by Whitney extension theorem, any closed subset is the zero set of a smooth function. For these functions with isolated zeros, the multiplicity of each zero is well defined. While for a non-trivial holomorphic function, the zeros are isolated and the multiplicity is positive, thus a $0$-subvariety. In the almost complex category, we can endow any almost complex structure on the total space $D^2\times \R^2$ where $D^2\times \{0\}$ is an almost complex submanifold.  Then an ``almost complex" function is just a pseudoholomorphic curve intersecting each fiber uniquely. By positivity of intersection for pseudoholomorphic curves, the zero locus is a $0$-subvariety.

The following proof of Liouville's theorem that any holomorphic function on a compact Riemann surface is  constant is a nice example how the positivity of intersection would lead to uniqueness results by applying the five properties of multiplity $I(u)$.

Denote the compact Riemann surface by $S$. The graph $\Gamma_f$ of a holomorphic function $f$ is a complex submanifold in $S\times \C$. 
This is homotopic to the graph of any constant function $\Gamma_c$.  On one hand, the intersection number $\iota(\Gamma_f, \Gamma_0)$ of $\Gamma_f$ and the zero section $\Gamma_0$ is nonnegative and it is positive when $f$ has a zero. On the other hand, $\iota(\Gamma_f, \Gamma_0)=\iota(\Gamma_c, \Gamma_0)=0$ for any $c\ne 0$. 
This implies $\Gamma_f\cap \Gamma_0=\emptyset$ unless $f$ is identically $0$. Similarly, $\Gamma_f\cap \Gamma_c=\emptyset$ unless $f$ is a constant $c$ for $\forall c\in \C$. This would imply $f$ has to be a constant function.

In the almost complex setting, Gromov \cite{Gr} has observed a positivity of intersection for curves and divisors. In our language, it is 

\begin{theorem}[Gromov: dim 2 $\cap$ codim 2]\label{Gromov}
Suppose that $Q$ is a compact codimension two $J$-holomorphic submanifold of the almost complex manifold $(M, J)$, and let $u: D^2\rightarrow (M, J)$ be a $J$-holomorphic curve such that $u(0)\in Q$ and $u(D^2)\subset Q$. Then $u^{-1}(Q)$ is a $0$-dimensional subvariety. 
\end{theorem}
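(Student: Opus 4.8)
The plan is to reduce the statement to the isolatedness and positivity of zeros of a scalar function obeying a perturbed Cauchy–Riemann equation, to which the Carleman similarity principle applies. Throughout I take $u(D^2)\not\subset Q$ (otherwise $u^{-1}(Q)=D^2$ and there is nothing to prove); since the conclusion is local I would analyse a single intersection point $s=0$ with $u(0)=p\in Q$ and then repeat the argument at every point of $u^{-1}(Q)$. First I would choose local complex coordinates $(z,w)\in\C\times\C^{n-1}$ near $p$ (with $n=\dim_\C M$) in which $Q=\{z=0\}$, $p=0$, and $J(p)=J_0$ is the standard structure; straightening $Q$ is harmless and $J(p)=J_0$ is achieved by a linear change. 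Writing $u(s)=(a(s),b(s))$, we have $u^{-1}(Q)=a^{-1}(0)$ near $s=0$ and $a\not\equiv 0$.

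The key step is to extract from $J(u)\circ du=du\circ j$ a closed equation for the normal component $a$. Extending $du$ complex linearly, $J$-holomorphicity gives $\partial_{\bar s}u:=du(\partial_{\bar s})\in T^{0,1}_{u}M$, so pairing with $dz$ yields $\partial_{\bar s}a=(dz)^{0,1}(\partial_{\bar s}u)$, where $(dz)^{0,1}=\tfrac12(dz+i\,dz\circ J)$ is the anti-$J$-linear part of $dz$. Here the hypothesis that $Q$ is $J$-holomorphic enters decisively: because $J$ preserves $TQ$, the space $T_qQ\otimes\C$ splits into $\pm i$–eigenspaces for $q\in Q$, and since $dz|_{TQ}=0$ a direct computation shows $(dz)^{0,1}$ annihilates $T^{0,1}_qQ$ for every $q\in Q$. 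Taking $\theta^0:=(dz)^{1,0}$ and a $(0,1)$–coframe $\bar\theta^0,\dots,\bar\theta^{n-1}$ dual to a frame $\bar Z_0,\dots,\bar Z_{n-1}$ with $\bar Z_1,\dots,\bar Z_{n-1}$ spanning $T^{0,1}Q$, the coefficients $c_k=(dz)^{0,1}(\bar Z_k)$ satisfy $c_k|_Q=0$ for $k\ge 1$, hence $c_k=O(|z|)=O(|a|)$ along $u$, while $c_0(0)=0$ because $(dz)^{0,1}$ vanishes at $p$. Expanding $\partial_{\bar s}u=\overline{\partial_s u}$ in this frame and using $\theta^0(\partial_s u)=\partial_s a$, the normal component of $\partial_{\bar s}u$ equals $\overline{\partial_s a}$, and one arrives at a reduced equation of Vekua type with an extra Beltrami term
\[
\partial_{\bar s}a=\beta(s)\,\overline{\partial_s a}+r(s),\qquad \beta=c_0,\ \ |\beta(s)|<1,\ \ |r(s)|\le C|a(s)|,
\]
valid on a small disk since $\beta$ is continuous with $\beta(0)=0$.

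With the reduced equation in hand, the conclusion follows from the similarity principle. I would first remove the Beltrami term $\beta\,\overline{\partial_s a}$ by a quasiconformal change of the disk coordinate, solvable because $\|\beta\|_\infty<1$ and an orientation-preserving homeomorphism, after which $a$ satisfies $\partial_{\bar s}a=\mu a+\nu\bar a$ with $\mu,\nu\in L^\infty$. The Carleman similarity principle then represents $a=\Phi\cdot\sigma$ near $0$, with $\Phi$ continuous and nowhere vanishing and $\sigma$ holomorphic; as $a\not\equiv 0$ we get $\sigma(s)=s^{k}g(s)$ with $g(0)\ne 0$ and $k\ge 1$. Hence the zeros of $a$ are isolated and the local intersection index at each equals the winding number of $a$, namely $k>0$; transporting back by the orientation-preserving coordinate change preserves both isolatedness and positivity. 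Therefore $u^{-1}(Q)=a^{-1}(0)$ is a discrete set of points each carrying a positive multiplicity, i.e. a $0$-dimensional subvariety, and these local indices coincide with the multiplicity $I(u)$ governed by the five properties of the toy model.

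The main obstacle is the second paragraph: showing that the normal component genuinely decouples into a Vekua–Beltrami equation whose inhomogeneity is controlled by $|a|$ rather than by $|du|$. This is exactly the point where $J$-holomorphicity of $Q$ must be converted into the vanishing of the tangential part of $(dz)^{0,1}$ along $Q$; a naive comparison of $J$ with $J_0$ does not suffice, since $J\neq J_0$ away from $p$ and one cannot arrange $J=J_0$ along $Q$ for non-integrable $J$. Once the reduced equation is established, the similarity principle is standard and the passage from ``isolated zeros of positive winding number'' to ``$0$-dimensional subvariety with positive weights'' is immediate.
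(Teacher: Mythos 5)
The paper does not actually prove this statement: it is quoted as a result of Gromov \cite{Gr} and used later as a black box (for instance in the dimension--reduction step of Theorem \ref{ICdim4}), so there is no in-paper argument to compare against. Your proposal is the standard local-analytic proof of positivity of intersection of a $J$-curve with an almost complex divisor, and its core is sound. The decisive step --- straightening $Q$ to $\{z=0\}$, observing that $J$-invariance of $TQ$ forces $(dz)^{0,1}$ to annihilate $T^{0,1}Q$, and hence that the normal component $a=z\circ u$ satisfies $\partial_{\bar s}a=\beta\,\overline{\partial_s a}+r$ with $|\beta|$ small and $|r|\le C|a|$ --- is exactly where the hypothesis enters, and you have identified it correctly; the frame computation giving $\lambda_0=\overline{\partial_s a}$ and $|c_k|\le C|z|$ for $k\ge 1$ checks out. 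Compared with the paper's axiomatic multiplicity $I(u)$ and the positive-cohomology-assignment machinery, your argument is more elementary and self-contained, and it is the argument that actually underlies the paper's repeated appeals to ``Gromov's positivity of intersections.''

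Three small points to tighten. First, a quasiconformal reparametrization of the domain removes terms of the form $q\,\partial_s a$, not $q\,\overline{\partial_s a}$; to invoke it you must first rewrite $\beta\,\overline{\partial_s a}$ as $\tilde\beta\,\partial_s a$ with a merely measurable $\tilde\beta$ satisfying $\|\tilde\beta\|_\infty\le\|\beta\|_\infty<1$ (setting $\tilde\beta=0$ where $\partial_s a=0$), or alternatively perform the algebraic substitution $\tilde a=a-\beta\bar a$, which is invertible for $|\beta|<1$ and preserves zeros and winding numbers. Second, the assertion that $a\not\equiv 0$ near a given intersection point does not follow directly from $u(D^2)\not\subset Q$; you need the standard clopen argument (the interior of $u^{-1}(Q)$ is open by definition and closed by the similarity-principle dichotomy at boundary points, hence empty). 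Third, your proof yields that $u^{-1}(Q)$ is discrete with positive local indices; finiteness, which the paper's definition of a $0$-dimensional subvariety demands, additionally requires an admissibility condition at $\partial D^2$ --- but discreteness plus positivity is the content that is actually used, so this is a defect of the statement rather than of your argument.
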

In particular, when $u$ is an inclusion, it implies the intersection $D\cap Q$ is a $0$-subvariety. 

When the ambient manifold is of dimension $4$, we have Micallef-White's more general positivity of intersection \cite{MW} which says, in our terminology, that the intersection of two non-identical irreducible pseudoholomorphic subvarieties is a pseudoholomorphic $0$-subvariety.

\begin{theorem}[Micallef-White]\label{MicW}
Let $u_i: D_i\rightarrow M^4$ be  $J$-holomorphic disks with an intersection point $u_1(p_1)=u_2(p_2)$. Suppose  $u_1(D_1)\ne u_2(D_2)$. Then this is an isolated intersection point with positive local intersection index.
\end{theorem}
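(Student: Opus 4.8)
The plan is to localize the problem entirely to a neighborhood of the intersection point, which I normalize to be the origin of $\mathbb{R}^4\cong\mathbb{C}^2$ with $J$ agreeing with the standard complex structure $J_0$ at the origin (achievable by a linear change of coordinates). The local intersection index is best read off topologically as a linking number: choosing a small sphere $S^3_\epsilon$ about the origin, the two disks meet it in two links $L_1,L_2$, disjoint by the hypothesis $u_1(D_1)\neq u_2(D_2)$, and the local index equals $\mathrm{lk}(L_1,L_2)$. The whole theorem then amounts to showing that the intersection is isolated and that this linking number is positive.

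First I would pin down the local asymptotics of a single $J$-holomorphic disk through the origin. Writing the $\bar\partial_J$-equation as a perturbed Cauchy--Riemann equation $\partial_{\bar z}u + Q(u)\partial_z u = 0$ with $Q(0)=0$, the Carleman similarity principle produces a factorization $u(z)=\Phi(z)\sigma(z)$ with $\Phi$ a continuous $\mathrm{GL}$-valued map and $\sigma$ genuinely holomorphic with an isolated zero. This gives an expansion $u(z)=a\,z^{k}+o(|z|^{k})$ with $a\neq 0$ for a positive integer $k$, the local multiplicity, so each disk has a well-defined tangent complex line; because $J$ is homotopic through the chart to $J_0$, the induced orientation on each $L_i$ is the complex one, so each $L_i$ is a positively oriented transverse link.

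The core of the argument, and the step I expect to be the main obstacle, is the simultaneous normalization of the two disks when they are tangent, possibly to high order, at the origin. Here one cannot straighten the two curves independently. The strategy is an inductive blow-up, matching the two asymptotic expansions order by order: at each stage one compares the leading coefficients; if they differ, the proper transforms separate transversally after one blow-up, contributing positively to the linking number, and if they agree one passes to the next order. Since each curve has finite multiplicity, the process terminates, and the hypothesis $u_1(D_1)\neq u_2(D_2)$ guarantees that the expansions must eventually disagree, so the curves genuinely separate. This is exactly the content of the Micallef--White local structure theorem: after a $C^{1,\alpha}$ diffeomorphism of the ambient neighborhood, both disks become complex-analytic curve germs simultaneously, while linking numbers, being $C^1$-invariants, are unchanged.

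Finally, once both disks are realized as honest holomorphic germs $f,g$ in a common chart, the classical theory applies verbatim: two distinct irreducible complex-analytic curve germs meet only at the origin, giving isolatedness, and the local intersection multiplicity $\dim_{\mathbb{C}}\mathcal{O}_0/(f,g)$ is a strictly positive integer that coincides with the linking number computed above. This yields both the isolation of the intersection point and the positivity of the local intersection index, completing the proof.
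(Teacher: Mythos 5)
First, a point of reference: the paper does not prove this statement at all --- it is quoted as Micallef--White's theorem from \cite{MW}, so there is no in-paper argument to compare yours against. Your sketch follows the standard route to positivity of intersections (localize, normalize $J$ to agree with $J_0$ at the point, read the local index as a linking number on a small $S^3_\epsilon$, reduce via the similarity principle and a local normal form to the classical holomorphic case), and at that level it is the right outline.

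The gap is that the step you yourself flag as ``the core of the argument'' is not proved but outsourced: the simultaneous $C^{1,\alpha}$ normalization of two disk germs that are tangent to high order \emph{is} the main theorem of Micallef--White's paper, and the positivity of intersections is its corollary, so invoking ``the Micallef--White local structure theorem'' at that point reduces the statement to itself. Your ``inductive blow-up'' paragraph is a heuristic for that theorem, not an argument. Concretely: (i) the disjointness of $L_1$ and $L_2$ on $S^3_\epsilon$, which you derive from $u_1(D_1)\ne u_2(D_2)$, already presupposes that the intersection point is isolated, which is part of the conclusion; (ii) termination of the order-by-order comparison does not follow from finiteness of the local multiplicities $k_1,k_2$ --- it requires a unique-continuation statement for two \emph{differently parametrized} pseudoholomorphic germs (one cannot apply the similarity principle to $u_1-u_2$ when $k_1\ne k_2$, and handling this is exactly the hard analysis in \cite{MW}); (iii) ``the proper transforms separate transversally after one blow-up'' is not justified --- blow-up is not an off-the-shelf operation in the almost complex category, one must show the proper transforms remain pseudoholomorphic for a controlled structure on the blow-up, and even in the integrable case the transforms can remain tangent or meet with multiplicity greater than one. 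If you intend the proof as a reduction to the Micallef--White local structure theorem, that reduction is correct and cleanly presented; as a self-contained proof it is missing precisely the content of that theorem.
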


To summarize, all the positivity of intersection results obtained so far in pseudoholomorphic setting are about the intersection of a curve and a divisor, {\it i.e.} a pseudoholomorphc subvariety of real codimension two, which are $0$-dimensional subvarieties. 

We now move to submanifolds or subvarieties of other dimensions. In smooth and complex categories, the intersection results work for any dimension. In smooth category, Thom's transversality theorem says if two submanifolds intersect transversely, then the intersection is a smooth manifold. Its complex counterpart is the intersection theory of analytic subvarieties.

The results of the intersection of almost complex submanifolds in \cite{ZCJM} are under the assumption  that there is no ``excess intersection" phenomenon besides the trivial inclusion case. For this reason, we require one of the submanifolds to be of codimension $2$. I believe this is just a technical condition, since excess intersection already appears in complex setting. For example,  projective subspaces of complex dimensions $k$ and $l$ in $\mathbb CP^n$ could share common projective subspaces of any dimension no less than $k+l-n$. Moreover, the intersection could have irreducible components with unequal dimensions.

Taking into account of the above, it is natural to first study the intersection of a pseudoholomotphic subvariety and a divisor, {\it i.e.} no excess intersection case. 
\begin{question}\label{intJcurve}
Suppose $(M^{2n}, J)$ is an almost complex $2n$-dimensional manifold, $Z_1$ is an irreducible $J$-holomorphic subvariety and $Z_2$ is a compact connected almost complex submanifold of (real) codimension $2$. If the intersection $Z_1\cap Z_2$ is not one of $Z_i$, is it a $J$-holomorphic subvariety of dimension $\dim_{\mathbb R} Z_1-2$?
\end{question}

The statement is apparently true if $Z_1$ and $Z_2$ intersect transversely, or the intersection is known to be a smooth manifold. When $n=2$, this is true by positivity of intersection of pseudoholomorphic curves \cite{Gr, McD, MW}. Then the next situation after Theorem \ref{Gromov} is when $\dim Z_1=4$, {\it i.e.} the intersection of a pseudoholomorphic $2$-subvariety and a divisor. The following slightly more general result is obtained in \cite{ZCJM}.

\begin{theorem}\label{ICdim4}
Suppose $(M^{2n}, J)$ is an almost complex $2n$-manifold, and  $Z_2$ is a codimension $2$ compact connected almost complex submanifold. Let $(M_1, J_1)$ be a compact connected almost complex $4$-manifold and $u: M_1\rightarrow M$ a pseudoholomorphic map such that $u(M_1)\nsubseteq Z_2$. Then $u^{-1}(Z_2)$ supports a $J_1$-holomorphic $1$-subvariety in $M_1$.
\end{theorem}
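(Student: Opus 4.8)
The plan is to reduce the statement to the two-dimensional positivity of Theorem \ref{Gromov} by slicing $M_1$ with $J_1$-holomorphic disks, and then to recognize the resulting intersection data as a positive cohomology assignment to the set $Z := u^{-1}(Z_2)$. First I would record the elementary features of $Z$: since $M_1$ is compact and $Z_2$ is closed, $Z$ is compact, and since $u(M_1)\nsubseteq Z_2$ it is a proper subset. Let $N\to Z_2$ be the normal bundle, which is a complex line bundle because $Z_2$ is an almost complex submanifold of real codimension two; fix a tubular neighborhood $U\cong N$ with projection $\pi:U\to Z_2$ and let $\nu:U\to N$ be the normal displacement, so that $\nu^{-1}(0)=Z_2$. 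The composition $s:=\nu\circ u$ is then a section of the pulled back complex line bundle $(\pi\circ u)^*N$ over $u^{-1}(U)$ whose zero set is exactly $Z$. Because $u$ is $J$-holomorphic and $J$ preserves both $TZ_2$ and the normal directions along $Z_2$, the section $s$ obeys a Cauchy-Riemann inequality $|\bar\partial_{J_1}s|\le c\,|s|$ near $Z$. This $s$ is the four-manifold analogue of the displacement function underlying the toy model $I(u)$ of this section.

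The next step is to pin down the size of $Z$. Standard unique continuation applied to $s$ forces the dichotomy that either $s\equiv 0$, which would mean $u(M_1)\subseteq Z_2$ and is excluded, or $Z$ has empty interior. To see that $Z$ is in fact at most two-dimensional, I would foliate a neighborhood of each point of $M_1$ by a two-parameter family of $J_1$-holomorphic disks, whose local existence is guaranteed by Nijenhuis-Woolf \cite{NW}. On each such disk $D$ with $u(D)\nsubseteq Z_2$ the restriction $u|_D$ is a $J$-holomorphic curve meeting $Z_2$, so Theorem \ref{Gromov} shows that $D\cap Z$ is a $0$-dimensional subvariety, a discrete set of points carrying positive local indices. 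Thus $Z$ fibers over a two-dimensional parameter space with discrete fibers, giving $\dim_{\mathbb R}Z\le 2$ together with locally finite two-dimensional measure.

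With the dimension under control, I would assemble the slice-wise counts into a single positive cohomology assignment to $Z$ in the sense used by Taubes. Concretely, to a smooth disk $\sigma:D\to M_1$ with $\sigma(\partial D)\cap Z=\emptyset$ I assign the integer $I(\sigma)$ defined as the intersection multiplicity of the disk $u\circ\sigma$ with $Z_2$, equivalently the zero count of the $\mathbb R^2$-valued admissible map $\nu\circ u\circ\sigma$. The five defining properties of the multiplicity $I(u)$ recorded above, namely vanishing, homotopy invariance, the degree $k$ rule and additivity, translate directly into the axioms of a cohomology assignment, while positivity is exactly the content of Theorem \ref{Gromov}: when $\sigma$ is $J_1$-holomorphic and meets $Z$, the curve $u\circ\sigma$ meets $Z_2$ with strictly positive index, so $I(\sigma)>0$. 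Finally I would invoke Taubes' structure theorem, that a compact subset of an almost complex four-manifold carrying a positive cohomology assignment and having locally finite two-dimensional measure is the support of a $J_1$-holomorphic $1$-subvariety, with the weights $m_i$ recovered from the values of $I$. Applied to $Z=u^{-1}(Z_2)$ this yields the desired conclusion.

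The hard part will be the local analysis along the \emph{bad set} of $M_1$, the points where $u$ fails to be immersed and the points where $u\circ\sigma$ is tangent to $Z_2$ to high order. At such points one must verify both that $Z$ stays thin, so that the locally finite measure hypothesis of Taubes holds, and that the assignment $I$ is still well defined and strictly positive. Here I expect to need the Micallef-White local representation of a pseudoholomorphic curve as a branched model plus a higher order error \cite{MW}, both to bound the intersection multiplicities uniformly and to guarantee that the slicing family of $J_1$-holomorphic disks can be chosen to sweep out a full neighborhood transverse to $Z$. Establishing these uniform local bounds, rather than the formal packaging into a cohomology assignment, is where the real work lies.
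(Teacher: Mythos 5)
Your proposal follows essentially the same route as the paper's proof: establish that $Z=u^{-1}(Z_2)$ has finite two-dimensional Hausdorff measure via a unique continuation argument plus a local foliation by $J_1$-holomorphic disks on which Gromov's positivity (Theorem \ref{Gromov}) applies, then define a positive cohomology assignment by pushing test disks forward to $M$ and counting their intersection with $Z_2$, and finally invoke Taubes' regularity theorem (Theorem \ref{pcaholo}). Your closing remarks correctly locate where the genuine technical work lies, namely the uniform local estimates near the non-immersed and high-order tangency points, which is exactly the content of the detailed analysis in \cite{ZCJM}.
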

A quick corollary of Theorem \ref{ICdim4} which directly answers Question \ref{intJcurve} when $\dim_{\R} Z_1=4$ and suffices for many applications is the following.

\begin{cor}\label{eICdim4}
Suppose $(M^{2n}, J)$ is an almost complex $2n$-dimensional manifold, and $Z_1$, $Z_2$ are compact connected almost complex submanifolds of dimension $4$ and $2n-2$ respectively. Then the intersection $Z_1\cap Z_2$ is either one of $Z_i$, or supports a $J$-holomorphic $1$-subvariety. 
\end{cor}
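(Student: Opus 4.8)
The plan is to deduce Corollary~\ref{eICdim4} directly from Theorem~\ref{ICdim4} by specializing the latter to the case where the source manifold is the $4$-dimensional submanifold $Z_1$ itself and the map $u$ is the inclusion. First I would observe that, since $Z_1$ is a compact connected almost complex submanifold of dimension $4$, its tangent bundle $TZ_1$ is $J$-invariant, so the restriction $J_1 := J|_{TZ_1}$ is a genuine almost complex structure on $Z_1$. Thus $(Z_1, J_1)$ is a compact connected almost complex $4$-manifold, and the inclusion $\iota: Z_1 \hookrightarrow M$ satisfies $d\iota \circ J_1 = J \circ d\iota$, i.e.\ it is a pseudoholomorphic embedding. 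This places us exactly in the hypotheses of Theorem~\ref{ICdim4} with $(M_1, J_1) = (Z_1, J|_{TZ_1})$ and $u = \iota$.

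Next I would split into cases according to whether $\iota(Z_1) = Z_1$ is contained in $Z_2$. If $Z_1 \subseteq Z_2$, which by the dimension count $4 \le 2n-2$ can only occur when $n \ge 3$, then $Z_1 \cap Z_2 = Z_1$ is one of the two submanifolds, and we land in the first alternative of the corollary. Otherwise $u(Z_1) = Z_1 \nsubseteq Z_2$, so Theorem~\ref{ICdim4} applies and yields that $\iota^{-1}(Z_2)$ supports a $J_1$-holomorphic $1$-subvariety in $Z_1$.

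Finally I would transport this conclusion from $Z_1$ back to $M$. Because $\iota$ is a pseudoholomorphic embedding, composing $\iota$ with each somewhere immersed pseudoholomorphic map parametrizing an irreducible component of the $J_1$-holomorphic $1$-subvariety produces again a pseudoholomorphic map into $M$, and this carries the $1$-subvariety (with its multiplicities) to a $J$-holomorphic $1$-subvariety in $M$ whose support is $\iota(\iota^{-1}(Z_2)) = Z_1 \cap Z_2$. This is the second alternative of the corollary, completing the argument.

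I would expect no serious obstacle here, since all the analytic content is absorbed into Theorem~\ref{ICdim4}; the only points requiring care are formal. One must verify that the restricted structure $J_1$ genuinely makes $\iota$ pseudoholomorphic, and check that pushing a $1$-subvariety forward along $\iota$ preserves both the subvariety structure, by functoriality of pseudoholomorphicity under composition, and the identification of its support with $Z_1 \cap Z_2$. One should also keep track of the low-dimensional degenerate situations, for instance when $n = 2$ and $Z_1 = M$, so that $Z_1 \cap Z_2 = Z_2$ is simultaneously ``one of the $Z_i$'' and a $1$-subvariety; these are consistent with the stated dichotomy rather than exceptions to it.
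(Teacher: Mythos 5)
Your proposal is correct and follows exactly the route the paper intends: the corollary is obtained by specializing Theorem~\ref{ICdim4} to $(M_1,J_1)=(Z_1,J|_{TZ_1})$ with $u$ the inclusion, after disposing of the trivial cases where one $Z_i$ contains the other. The formal checks you flag (the restricted $J$ makes the inclusion pseudoholomorphic, and the pushforward of the $1$-subvariety under the embedding has support $Z_1\cap Z_2$) are precisely the only content needed beyond the theorem itself.
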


Notice we do not require our almost complex structures $J$ or $J_1$ to be tamed by a symplectic form. Recall that an almost complex structure $J$ is said to be
tamed by a symplectic form $\omega$ if the bilinear form $\omega(\cdot, J\cdot)$ is positive definite. We say $J$ is tamed if we do not specify such a symplectic form albeit there exists one. An almost complex structure $J$ is compatible with $\omega$ if $J$ is tamed by $\omega$ and $\omega(v, w)=\omega(Jv, Jw)$ for any $v, w\in TM$. We also say such a $J$ is almost K\"ahler if we do not specify a compatible symplectic form. 

In the statement of Theorem \ref{ICdim4}, a set is said to supporting a pseudoholomorphic $1$-subvariety if it is the support $|\Theta|=\cup_{(C_i, m_i)\in \Theta}C_i$ of a pseudoholomorphic $1$-subvariety $\Theta$. In fact, we are also able to determine the homology class of the $J_1$-holomorphic $1$-subvariety in Theorem \ref{ICdim4}.  The homology class $e_{\Theta}=\sum_{(C_i, m_i)\in \Theta} m_i[C_i]$ is calculated by the homology class of the submanifold of $M_1$ that is obtained in a similar manner but using a (smooth) perturbation of $u$ that is transverse to $Z_2$. See \cite{ZCJM} for details.

Notice that the image of $u$ in Theorem \ref{ICdim4} might not be a $J$-holomorphic subvariety of dimension $4$. If $u(M_1)$ is of dimension $0$, then it is a point, and $u(M_1)\cap Z_2=\emptyset=u^{-1}(Z_2)$ since $u(M_1)\nsubseteq Z_2$. If $u(M_1)$ is a $J$-holomorphic $1$-subvariety, since $u(M_1)\nsubseteq Z_2$ and $u(M_1)$ is connected, then $u(M_1)\cap Z_2$ is a collection of finitely many points possibly with multiplicities, {\it i.e.} a $0$-dimensional subvariety. Moreover, the preimage of each such point is a $J_1$-holomorphic $1$-subvariety. If $u(M_1)$ is of dimension $4$, $u(M_1)\cap Z_2$ is the image of $J_1$-holomorphic subvarieties $u^{-1}(Z_2)$. And each irreducible component of $u^{-1}(Z_2)$ is either contracted to a point, or mapped to a $J$-holomorphic curve.

Let us briefly explain the idea of the proof of Theorem \ref{ICdim4}. The strategy is to view the intersecting set as a positive current and then use certain regularity results to show it is indeed a pseudoholomorphic subvariety. Or slightly more precise, we first show that set $A=u^{-1}(Z_2)$ intersects positively with all local $J_1$-holomorphic disks, then we show that $A$ is a $J_1$-holomorphic $1$-subvariety. This basic strategy dates back to \cite{King} at least, where it works in the complex analytic setting. In the pseudoholomorphic situation, this strategy is worked out in \cite{T}. 

It is helpful to first see a way to show a submanifold is almost complex: If $\Sigma \subset \C^2$ is a codimension $2$ submanifold with positive local intersection with all complex lines,  then $\Sigma$ is complex. To show that, we use Taylor's theorem with remainder to represent $\Sigma$ near a point as a graph over its tangent space. If the tangent space is not complex at the given point, then it is easy to write down a complex line which has non-positive intersection number with $\Sigma$ at the point by applying the toy model in the beginning of this section.

However, our set  $A=u^{-1}(Z_2)$ is more complicated, it is certainly not a submanifold in general. Moreover, to formulate such a statement for $A$, we need a notion of intersection number which mimics our definition of multiplicities of a smooth function in the toy model. 
For this sake, Taubes introduces the notion of ``positive cohomology assignment", which plays the role of intersection number of our set $A$ with each local open disk. We assume $(X, J)$ is an almost complex manifold, and $C\subset X$ is merely a subset at this moment. Let $D\subset \mathbb C$ be the standard unit disk. A map $\sigma: D\rightarrow X$ is called {\it admissible} if $C$ intersects the closure of $\sigma(D)$ inside $\sigma(D)$. The following is extracted from section 6.1(a) of \cite{T}.

\begin{definition}\label{PCA}
A positive cohomology assignment to the set $C$ is an assignment of an integer, $I(\sigma)$, to each admissible map $\sigma: D\rightarrow X$. Furthermore, the following criteria have to be met: 
\begin{enumerate}
\item If $\sigma: D\rightarrow X\setminus C$, then $I(\sigma)=0$. 

\item If $\sigma_0, \sigma_1: D\rightarrow X$ are admissible and homotopic via an admissible homotopy (a homotopy $h:[0, 1]\times D\rightarrow X$ where $C$ intersects the closure of Image$(h)$ inside Image$(h)$), then $I(\sigma_0)=I(\sigma_1)$.

\item Let $\sigma: D\rightarrow X$ be admissible and let $\theta: D\rightarrow D$ be a proper, degree $k$ map. Then $I(\sigma\circ \theta)=k\cdot I(\sigma)$.

\item Suppose that $\sigma: D\rightarrow X$ is admissible and that $\sigma^{-1}(C)$ is contained in a disjoint union $\cup_iD_i\subset D$ where each $D_i=\theta_i(D)$ with $\theta_i: D_i\rightarrow D$ being an orientation preserving embedding. Then $I(\sigma)=\sum_iI(\sigma\circ \theta_i)$.

\item If $\sigma: D\rightarrow X$ is admissible and a $J$-holomorphic embedding with $\sigma^{-1}(C)\ne \emptyset$, then $I(\sigma)>0$.
\end{enumerate}
\end{definition}

We can see that these are exactly the same $5$ properties when we define the multiplicity of a continuous function in the beginning of this section.

In this terminology, the simple current argument to show a submanifold is almost complex is generalized by the following result of Taubes \cite{T}.

\begin{theorem}\label{pcaholo}
Let $(X, J)$ be a $4$-dimensional almost complex manifold and let $C\subset X$ be a closed set with finite $2$-dimensional Hausdorff measure and a positive cohomology assignment. Then $C$ supports a compact $J$-holomorphic $1$-subvariety. 
\end{theorem}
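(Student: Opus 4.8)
The plan is to realize $C$ as the support of a positive, locally boundaryless $2$-current whose approximate tangent planes are $J$-complex lines, and then to invoke a local regularity theorem that identifies such currents with $J$-holomorphic $1$-subvarieties. This is the almost complex analogue of King's structure theorem \cite{King} in the integrable setting, and the point is that the positive cohomology assignment plays throughout the role that intersection multiplicity plays in the complex case.

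First I would build the current. Since $\mathcal H^2(C)<\infty$, the restriction $\mu$ of $\mathcal H^2$ to $C$ is a finite Radon measure. Using the axioms of Definition \ref{PCA}, I would extract at $\mu$-almost every point $p$ an approximate tangent plane $\Pi_p$ together with an integer weight: the weight is read off from $I(\sigma)$ for small embedded disks $\sigma$ meeting $C$ near $p$, and axioms (3)--(4) guarantee that this reading is well defined and additive. This produces a rectifiable $2$-current $T$ with integer multiplicities supported on $C$. Second, I would show that $T$ is positive of bidimension $(1,1)$. At $\mu$-a.e.\ $p$, if $\Pi_p$ were not a $J$-complex line, then --- exactly as in the toy model at the beginning of this section --- one can write down a small $J$-holomorphic embedded disk $\sigma$ through $p$ for which the local intersection, as measured by $I(\sigma)$, is zero or negative, contradicting axiom (5). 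Hence every approximate tangent plane is a complex line, and orienting it by $J$ makes $T$ positive. The homotopy invariance (2) together with the vanishing axiom (1) forces $T$ to have no interior boundary, so $T$ is a positive current of finite mass and locally without boundary.

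The decisive and hardest step is the local regularity: to conclude that near each point $C$ is a finite union of images of $J$-holomorphic disks, carrying the weights of $T$ as multiplicities. In the integrable case this follows at once from King's theorem for positive closed $(1,1)$-currents. In the merely almost complex setting there are no holomorphic coordinates and no Weierstrass preparation, so the argument must instead combine three ingredients: a monotonicity formula for the $J$-area, to bound the number of local branches and to force the densities to be positive integers; the Carleman similarity principle, i.e.\ unique continuation for the Cauchy--Riemann operator, to show that each branch is graphical over its complex tangent line and is in fact parametrized by a $J$-holomorphic disk; and the four-dimensional positivity of intersection (Theorems \ref{Gromov} and \ref{MicW}), to keep all local multiplicities positive and to ensure that distinct branches meet only in isolated points. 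Assembling the local disks into maximal connected pieces yields irreducible $J$-holomorphic subvarieties $C_i$, and the locally constant weights supply the multiplicities $m_i$, producing a $J$-holomorphic $1$-subvariety $\Theta$ with $|\Theta|=C$.

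I expect the main obstacle to lie entirely in this last step, and specifically in controlling the singular set of $C$ without the rigidity afforded by integrability. The difficulty is to upgrade the soft measure-theoretic and current-theoretic data --- finite mass, complex tangent planes, and a positive cohomology assignment --- into the hard conclusion that $C$ is locally a finite union of honest pseudoholomorphic branches. This is precisely where the almost complex analysis (the similarity principle, the monotonicity formula, and the positivity of intersection) does the real work, and it is the reason the hypothesis $\dim_{\R} X = 4$ cannot be relaxed at this stage.
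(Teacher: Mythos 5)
First, a point of reference: the paper does not actually prove Theorem \ref{pcaholo} --- it is quoted from Taubes \cite{T} (Section 6.1) --- and the survey only records the governing strategy, namely to realize $C$ as a positive closed current and then invoke a regularity theorem in the spirit of King \cite{King}. Your outline follows exactly that strategy, so at the level of architecture you are aligned with the source.

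That said, there are two genuine gaps. The first is at the very start: you restrict $\mathcal H^2$ to $C$ and assert that at $\mu$-almost every point one can extract an approximate tangent plane $\Pi_p$ and an integer weight from the axioms of Definition \ref{PCA}. A closed set of finite $2$-dimensional Hausdorff measure need not be rectifiable --- it can carry a purely unrectifiable part of positive measure admitting no approximate tangent planes anywhere --- so the existence of $\Pi_p$ is not free; it is precisely what the positive cohomology assignment has to be used to establish. In Taubes' argument this is done not pointwise but by slicing: one foliates a neighbourhood of each point of $C$ by families of $J$-holomorphic disks in complementary directions, uses axioms (3)--(5) to show that the resulting integer-valued intersection function is nonnegative and essentially locally constant in the parameter, and integrates over the parameter space to build the current; finiteness of $\mathcal H^2(C)$ bounds the mass and homotopy invariance (2) gives closedness. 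Your ``weight read off from $I(\sigma)$ for small embedded disks near $p$'' is not well defined without this foliation machinery, since different disks through $p$ could a priori return different answers. The second gap is the one you flag yourself: the local regularity step, upgrading a positive closed current with $2$-dimensional support to an honest $1$-subvariety, is the bulk of the work in \cite{T}, and naming the monotonicity formula, the similarity principle and positivity of intersection as ingredients does not amount to carrying it out --- in particular the passage from ``complex tangent planes $\mu$-a.e.'' to ``locally a finite union of $J$-holomorphic branches with positive integer multiplicities'' needs a density lower bound and a compactness/limit argument for pseudoholomorphic disks that your sketch omits. So the proposal is a faithful roadmap of the correct strategy, but both its first and its last steps conceal the actual analytic content of the theorem.
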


By virtue of this, the proof of Theorem \ref{ICdim4} is divided by two parts. The first step is to show $A=u^{-1}(Z_2)$ has finite $2$-dimensional Hausdorff measure. To establish this, I first prove a generalization of unique continuation of pseudoholomorphic curves to higher dimensions. This prevents $A$ from being an open subset of $M_1$, thus reduces our discussion to a small open neighborhood of any point in $M_1$. Then we use a dimension reduction argument. By Gromov's positivity of intersections, a dimension $2$ and a codimension $2$ connected almost complex submanifolds intersect at isolated points positively. This could be used to show the $2$-dimensional Hausdorff dimension of the intersection $A=u^{-1}(Z_2)$ is finite with the help of a local smooth foliation of $J_1$-holomorphic disks on $M_1$. Then the coarea formula would imply the finiteness of the $2$-dimensional Hausdorff measure of $A$.

The second part is to find the positive cohomology assignment when $A$ is considered as a subset in $4$-manifold $M_1$. The idea is, instead of using the set $A$ directly, one assigns the intersection number of the image of our test disk in $M$ with the submanifold $Z_2$ in the ambient manifold $M$. This ends the sketch of the proof of Theorem \ref{ICdim4}.

\subsection{Further discussions}
If we insert Theorem \ref{MicW} into our argument to replace Theorem \ref{Gromov}, we have the following stronger result when the ambient space is of dimension $4$. 

\begin{theorem}\label{ICdim4MW}
Suppose $(M^{4}, J)$ is an almost complex $4$-manifold, and  $Z_2$ is a $J$-holomorphic $1$-subvariety. Let $(M_1, J_1)$ be a closed connected almost complex $4$-manifold and $u: M_1\rightarrow M$ a pseudoholomorphic map such that $u(M_1)\nsubseteq Z_2$. Then $u^{-1}(Z_2)$ supports a $J_1$-holomorphic $1$-subvariety in $M_1$.
\end{theorem}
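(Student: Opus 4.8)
The plan is to rerun the two-step argument sketched for Theorem~\ref{ICdim4} in the special case $n=2$, replacing every appeal to Gromov's positivity (Theorem~\ref{Gromov}) by Micallef-White's positivity (Theorem~\ref{MicW}). The key structural observation is that when $\dim_{\mathbb R}M=4$ and $Z_2$ is a $J$-holomorphic $1$-subvariety, the local model for ``a $J$-holomorphic disk meeting $Z_2$'' is exactly ``two $J$-holomorphic curves in a $4$-manifold''. Indeed, near any point $p\in Z_2$ the structure theory of pseudoholomorphic curves writes $Z_2=\bigcup_j W_j$ as a finite union of branches, each $W_j$ being the image of a $J$-holomorphic disk carrying a multiplicity. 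Thus any positivity we previously extracted from Theorem~\ref{Gromov} for the smooth divisor can now be extracted branch by branch from Theorem~\ref{MicW}.

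Set $A=u^{-1}(Z_2)\subset M_1$, which is closed since $Z_2$ is closed and $u$ is continuous. First I would show that $A$ has finite $2$-dimensional Hausdorff measure. As in Theorem~\ref{ICdim4}, the higher-dimensional unique continuation for $u$ shows that $A$ has empty interior and reduces the estimate to a neighbourhood of each point of $M_1$, where I choose a smooth foliation by $J_1$-holomorphic disks. For a leaf $L$ with $u|_L$ non-constant and $u(L)\nsubseteq Z_2$, the image $u(L)$ is a $J$-holomorphic curve in $M^4$; intersecting it against each branch $W_j$ of $Z_2$ and invoking Theorem~\ref{MicW} shows that $A\cap L=(u|_L)^{-1}(Z_2)$ is discrete. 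The leaves on which $u$ degenerates or maps into $Z_2$ are controlled by unique continuation, and the coarea formula then upgrades leafwise discreteness to finiteness of $\mathcal H^2(A)$.

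Next I would equip $A$ with a positive cohomology assignment in the sense of Definition~\ref{PCA}, taking $X=M_1$ and $C=A$. For an admissible $\sigma\colon D\to M_1$ I define $I(\sigma)$ to be the intersection number in $M$ of $u\circ\sigma$ with the $1$-subvariety $Z_2$, counted with the multiplicities $m_i$ of $Z_2$; this is well defined because admissibility forces $(u\circ\sigma)^{-1}(Z_2)$ to be a compact subset of the open disk, and it is a homotopy invariant, so properties (1)--(4) follow formally just as for the toy model at the start of the section. The decisive property (5) is where Theorem~\ref{MicW} enters: if $\sigma$ is a $J_1$-holomorphic embedding with $\sigma^{-1}(A)\neq\emptyset$, admissibility rules out $\sigma(D)\subseteq A$ (otherwise the closure of $\sigma(D)$ would meet $A$ along $\sigma(\partial D)$), so $u\circ\sigma$ is a non-constant $J$-holomorphic disk whose image is not contained in $Z_2$; applying Theorem~\ref{MicW} to $u\circ\sigma$ against each branch of $Z_2$ shows every point of $(u\circ\sigma)^{-1}(Z_2)$ contributes a strictly positive local index, whence $I(\sigma)>0$. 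With the finite Hausdorff measure and this assignment in hand, Taubes' regularity result Theorem~\ref{pcaholo} concludes that $A$ supports a compact $J_1$-holomorphic $1$-subvariety.

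The main obstacle, and the only genuinely new point beyond the proof of Theorem~\ref{ICdim4}, is the singular locus of $Z_2$: where several branches $W_j$ collide, or where $u\circ\sigma$ is tangent to or has high order contact with a branch, one must ensure that both the finiteness estimate and the positivity are unaffected. This is precisely what Theorem~\ref{MicW} is built to deliver, since its conclusion that any intersection of two distinct irreducible $J$-holomorphic curves is isolated with strictly positive local index is insensitive to tangency and to the number of colliding branches; one simply applies it to each unordered pair of branches and sums the positive contributions. A secondary technical care is to define the intersection number $I(\sigma)$ with the possibly singular $Z_2$ consistently, for which one uses the homology class $e_{Z_2}=\sum_i m_i[C_i]$ together with a transverse smooth perturbation of $u\circ\sigma$, exactly as in the class computation underlying Theorem~\ref{ICdim4}.
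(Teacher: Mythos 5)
Your proposal is correct and follows essentially the same route the paper indicates: the paper's proof of Theorem~\ref{ICdim4MW} is precisely to rerun the two-step argument for Theorem~\ref{ICdim4} (finite $2$-dimensional Hausdorff measure via unique continuation, foliation by $J_1$-holomorphic disks and the coarea formula, then a positive cohomology assignment given by the intersection number with $Z_2$ in the ambient manifold, followed by Taubes' Theorem~\ref{pcaholo}), with Micallef--White's Theorem~\ref{MicW} substituted for Gromov's Theorem~\ref{Gromov} as the source of leafwise positivity. Your branch-by-branch treatment of the singular locus of $Z_2$ and the definition of $I(\sigma)$ via the class $e_{Z_2}$ and a transverse perturbation are consistent elaborations of the same argument.
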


We certainly expect Theorem \ref{ICdim4} would hold in this generality for higher dimensional ambient spaces as well.

To say a bit more on the proof, the overdeterminedness of the equations satisfied by almost complex submanifolds of complex dimension $>1$ (though it is already the case in the complex setting) was one of the main reasons preventing people from studying these submanifolds. The other reason is that there are no classical technical tools since we are out of the boundary of elliptic equations. The above described foliation-by-disks technique is essentially ``foliating" an overdetermined system by elliptic systems, where each leaf has positivity of intersection (by Gromov's positivity of intersection of a curve and a divisor, Theorem \ref{Gromov}, or Micallef-White's Theorem \ref{MicW}). On the other hand, it is possible to choose some natural parametrizing spaces of almost complex structures to make it an elliptic system as a whole. The intersection theory of each ``fiber" is of fundamental importance.

This technique of foliating a neighborhood by $J$-holomorphic disks could be generalized to higher ambient dimensions ({\it e.g.} Lemma 3.10 of \cite{ZCJM}). It implies that the first half of Theorem \ref{ICdim4} still holds when $\dim M_1>4$. Precisely, if we replace $(M_1, J_1)$ in the statement by a compact connected almost complex manifold of dimension $2k$, then $u^{-1}(Z_2)\subset M_1$ is a closed set with finite $(2k-2)$-dimensional Hausdorff measure and a positive cohomology assignment. 
Hence, it is reduced to show the following generalization of Theorem \ref{pcaholo}.
\begin{question}\label{pcaholoco2}
Let $(X, J)$ be a closed $2k$-dimensional almost complex manifold and let $C\subset X$ be a closed set with finite $(2k-2)$-dimensional Hausdorff measure and a positive cohomology assignment. Does $C$ support a $J$-holomorphic subvariety of complex dimension $k-1$?
\end{question}
 
 So far, our intersection theory in the almost complex category always has a divisor as one side of the intersecting object. To go beyond this codimension two restriction, we will encounter the excess intersection, as mentioned above. The relevant situation of lowest dimension to study is the intersection of two $4$-dimensional almost complex submanifolds in a $8$-dimensional almost complex manifold. This is a case where Taubes' Theorem \ref{pcaholo} could still be applied. The difficulty here is to find a positive cohomology assignment. One should be able to establish the excess intersection formula \cite{Ful} at the same time. Such a formula is known when the intersection is clean (see {\it e.g.} \cite{CG}). 

Lastly, we remark that the intersection theory of almost complex submanifolds could be used to study symplectic birational geometry, which is initiated by Yongbin Ruan in 1990s to generalize algebraic birational geometry to symplectic category. Some very preliminary applications to this subject could be found in \cite{ZCJM}. It is believed to be useful in realizing Ruan's idea of symplectic supporting divisors which transplant ideas of big and nef divisors in algebraic geometry. 

\subsection{Orbifolds}
We can also generalize our results to (smooth) orbifolds. We follow the notation of \cite{WC} where an introduction to the orbifold theory on symplectic geometry and pseudoholomorphic curves is provided. An $n$-dimensional orbifold structure on a paracompact Hausdorff space $X$ is given by an atlas of local charts $\mathbb U$, where $\mathbb U=\{U_i|i\in I\}$ is an open cover of $X$ such that for any $p\in U_i\cap U_j, U_i, U_j\in \mathbb U$, there is a $U_k\in \mathbb U$ with $p\in U_k\subset U_i\cap U_j$. Moreover, 
 \begin{itemize}
 \item for each $U_i\in \mathbb U$, there exists a triple $(V_i, G_i, \pi_i)$, called a local uniformizing system, where $V_i$ is an $n$-dimensional smooth manifold, $G_i$ is a finite group acting smoothly and effectively on $V_i$, and $\pi_i: V_i\rightarrow U_i$ is a continuous map inducing a homeomorphism $U_i\cong V_i/G_i$;
\item for each pair $U_i, U_j\in \mathbb U$ with $U_i\subset U_j$, there is a set $Int(U_i, U_j)=\{(\phi, \lambda)\}$, whose elements are called injections, where $\phi: V_i\rightarrow V_j$ is a smooth open embedding and $\lambda: G_i\rightarrow G_j$ is an injective homomorphism, such that $\phi$ is $\lambda$-equivariant and satisfies $\pi_i=\pi_j\circ\phi$, and $G_i\times G_j$ acts transitively on $Inj(U_i, U_j)$ by $$(g, g')\cdot (\phi, \lambda)=(g'\circ\phi\circ g^{-1}, Ad(g')\circ\lambda\circ Ad(g^{-1});$$ 
\item the injections are closed under composition for any $U_i, U_j, U_k\in \mathbb U$ with $U_i\subset U_j\subset U_k$. 
\end{itemize}

The almost complex structure and symplectic structure on orbifolds are natural extensions of the ones for manifolds. For example, an almost complex structure on an orbifold $X$ is an endomorphism $J: TX\rightarrow TX$ with $J^2=-1$, which is given by a family of endomorphisms $\{J_i: TV_i\rightarrow TV_i\}$ with $J_i^2=-1$ for each local uniformizing system $(V_i, G_i, \pi_i)$, which is equivariant under the local group actions and compatible with respect to the injections. Pseudoholomorphic maps considered in this category are maps from a compact almost complex orbifold $X$ to another almost complex orbifold $M$, $f: X\rightarrow M$, such that if $f(z)=p$ and $(D_z, G_z)$, $(V_p, G_p)$ be local uniformizing systems at $z, p$ respectively, then $f$ determines a germ of pairs $(\hat h_z, \rho_z)$, where $\rho_z: G_z\rightarrow G_p$ is an injective homomorphism and $\hat h_z: D_z\rightarrow V_p$ is $\rho_z$-equivariant and $J$-holomorphic. Then our pseudoholomorphic subvarieties could be defined correspondingly as the positively weighted image of pseudoholomorphic maps as well. 

The adjunction formula and the intersection formula, in particular, the positivity of intersection for pseudoholomorphic curves, are extended in orbifold setting, as can be found in \cite{WC}. The results in \cite{T} are also extended to orbifolds in \cite{WCs}. In particular, the proof of Taubes' Theorem \ref{pcaholo} could be extended to the orbifold setting without any essential change. As rest of our argument for Theorem \ref{ICdim4} (and Theorem \ref{ICdim4MW}) is local, these could be worked on local uniformizing systems of orbifolds, hence the results in this section could be extended to orbifolds setting. 

\begin{theorem}\label{ICdim4orb}
Suppose $(M^{2n}, J)$ is an almost complex $2n$-orbifold, and  $Z_2$ is a codimension $2$ compact connected almost complex sub-orbifold. Let $(M_1, J_1)$ be a compact connected almost complex $4$-orbifold and $u: M_1\rightarrow M$ a pseudoholomorphic map such that $u(M_1)\nsubseteq Z_2$. Then $u^{-1}(Z_2)$ supports a $J_1$-holomorphic $1$-subvariety in $M_1$.

When $n=2$, we can take $Z_2$ to be a $J$-holomorphic $1$-subvariety in the assumption. 
\end{theorem}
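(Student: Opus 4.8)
The plan is to run the two-step argument behind Theorem \ref{ICdim4} essentially verbatim on local uniformizing systems, invoking two facts that have already been upgraded to the orbifold category: the orbifold positivity of intersection for pseudoholomorphic curves from \cite{WC} (Gromov's Theorem \ref{Gromov}, together with Micallef--White's Theorem \ref{MicW} for the $n=2$ refinement), and Taubes' regularity result Theorem \ref{pcaholo} in the orbifold form of \cite{WCs}. Fix a point $z\in M_1$ with local uniformizing system $(D_z,G_z,\pi_z)$, and let $(V_p,G_p,\pi_p)$ uniformize $M$ near $p=u(z)$. By the definition of an orbifold pseudoholomorphic map, $u$ is represented near $z$ by a $\rho_z$-equivariant $J$-holomorphic germ $\hat h_z:D_z\rightarrow V_p$, and $Z_2$ lifts to a $G_p$-invariant codimension-two almost complex submanifold $\widetilde Z_2\subset V_p$. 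Thus upstairs the relevant set is $\hat h_z^{-1}(\widetilde Z_2)$, and everything reduces to the smooth situation already handled in \cite{ZCJM}.

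First I would establish that $A=u^{-1}(Z_2)$ has finite $2$-dimensional Hausdorff measure. Since both the higher-dimensional unique continuation argument and the foliation of neighbourhoods by $J_1$-holomorphic disks used in \cite{ZCJM} are entirely local, I would carry them out for $\hat h_z$ on the smooth $4$-manifold $D_z$: unique continuation keeps $\hat h_z^{-1}(\widetilde Z_2)$ from containing an open set, and foliating a neighbourhood of each point of $D_z$ by $J_1$-holomorphic disks, each of which meets $\widetilde Z_2$ in isolated points by the (orbifold) positivity of intersection, bounds $\mathcal H^2(\hat h_z^{-1}(\widetilde Z_2))$ via the coarea formula. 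Because $\pi_z$ is a finite branched cover of order $|G_z|$ whose branch locus has lower dimension, the Hausdorff measure of $A\cap U_z$ is comparable to that of $\hat h_z^{-1}(\widetilde Z_2)$; covering the compact orbifold $M_1$ by finitely many charts then gives $\mathcal H^2(A)<\infty$.

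Second, I would produce a positive cohomology assignment to $A$, viewed as a closed subset of the $4$-orbifold $M_1$, in the orbifold sense of \cite{WCs}. Exactly as in the manifold proof, to an admissible test disk $\sigma:D\rightarrow M_1$ I assign the intersection multiplicity, in the sense of the toy-model invariant $I(\cdot)$ from the start of this section, of $u\circ\sigma$ with $Z_2$ inside $M$; concretely one composes with a local projection onto the rank-two normal bundle of $Z_2$ to obtain a map $D\rightarrow\R^2$ whose zero set is $\sigma^{-1}(A)$ and takes $I$ of it. When $\sigma$ meets the singular locus of $M_1$ one lifts $\sigma$ to the uniformizing chart and computes upstairs, and the five axioms of Definition \ref{PCA} are then verified on uniformizing systems precisely as before, with axiom (5) supplied by Theorem \ref{Gromov}; compatibility under the injections guarantees that these local assignments glue to a genuine assignment on $M_1$. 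With finite Hausdorff measure and a positive cohomology assignment in hand, the orbifold form of Theorem \ref{pcaholo} from \cite{WCs} yields that $A$ supports a $J_1$-holomorphic $1$-subvariety. For the $n=2$ statement, $Z_2$ is only a $J$-holomorphic $1$-subvariety, so I would simply replace Gromov's Theorem \ref{Gromov} by the orbifold Micallef--White Theorem \ref{MicW} of \cite{WC}, both in the Hausdorff-measure bound and in verifying axiom (5), exactly mirroring the passage from Theorem \ref{ICdim4} to Theorem \ref{ICdim4MW}.

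The hard part, beyond bookkeeping, will be the behaviour at the orbifold (isotropy) points: one must check that the local foliations and the cohomology assignment can be chosen $G_z$-equivariantly, or equivalently that the assignment computed on the uniformizing cover descends unambiguously when a test disk passes through a singular point, and that the order-$|G_z|$ branching corrupts neither the finiteness of $\mathcal H^2$ nor the positivity in axiom (5). Since each of these is a statement about a finite group acting on a smooth chart, and the two black-box theorems are already available equivariantly, I anticipate no essential new difficulty, consistent with the remark preceding the statement that the remainder of the argument is purely local.
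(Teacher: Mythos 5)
Your proposal is correct and follows essentially the same route as the paper: the author likewise reduces to the local uniformizing systems, cites the orbifold extensions of the positivity of intersection from \cite{WC} and of Taubes' regularity theorem from \cite{WCs}, and observes that the remainder of the proof of Theorem \ref{ICdim4} (and of Theorem \ref{ICdim4MW} for the $n=2$ case) is purely local. Your added attention to equivariance and descent at isotropy points is a more careful spelling-out of what the paper compresses into ``without any essential change,'' not a different argument.
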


\section{Maps between almost complex manifolds and birational morphisms}
The results in this section are built on the intersection theory of almost complex submanifolds. We start by motivating these through our smooth to almost complex philosophy. 

For a smooth map $u: X\rightarrow M$, the {\it singularity subset} $S_u$ of $u$ is the set of points $p\in X$ such that the differential $du_p: T_pX\rightarrow T_{u(p)}M$ is not of full rank. There are two classical results on singularities of differential maps.
 
\begin{theorem}[Sard's theorem] 
Suppose $u: X\rightarrow M$ is a $C^k$ map. If $k\ge \max\{\dim X-\dim M+1, 1\}$, then the image $u(\mathcal S_u)$ has measure zero as a subset of $M$.
\end{theorem}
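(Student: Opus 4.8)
The plan is to reduce to a local Euclidean statement and then run the classical Morse--Sard induction on $\dim X$. Writing $n=\dim X$ and $m=\dim M$, and covering $X$ and $M$ by countably many charts (a countable union of null sets is null), it suffices to treat a $C^k$ map $u\colon U\to\mathbb{R}^m$ on an open set $U\subseteq\mathbb{R}^n$, with $S_u\subseteq U$ its set of non-full-rank points. When $n<m$ the hypothesis is just $k\ge 1$, and here even $u(U)\supseteq u(S_u)$ is null: subdividing a cube into $N^n$ subcubes, a $C^1$ map sends each into a box of $m$-volume $O(N^{-m})$, for total volume $O(N^{n-m})\to 0$. So I may assume $n\ge m$, where full rank means surjectivity of $du$, and induct on $n$ (the case $n=0$ being vacuous).

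For the inductive step I stratify the critical set by the order of vanishing of derivatives: set $C_i=\{x\in U:\ D^ju(x)=0\ \text{for all}\ 1\le j\le i\}$, so $S_u\supseteq C_1\supseteq C_2\supseteq\cdots$, and for a threshold $s$ to be chosen I write $u(S_u)=u(S_u\setminus C_1)\cup\bigl(\bigcup_{1\le i<s}u(C_i\setminus C_{i+1})\bigr)\cup u(C_s)$ and show each piece is null.

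Three estimates drive the argument. First, near a point of $S_u\setminus C_1$ some first-order partial of some component of $u$ is nonzero, so after a local diffeomorphism $u$ takes the form $(x_1,x')\mapsto(x_1,v(x_1,x'))$; a point is then critical for $u$ exactly when it is critical for the slice map $v(x_1,\cdot)\colon\mathbb{R}^{n-1}\to\mathbb{R}^{m-1}$, so the inductive hypothesis (here $\dim$ and codimension both drop by one, so the required smoothness is unchanged) makes every slice of $u(S_u\setminus C_1)$ null and Fubini promotes this to $u(S_u\setminus C_1)$ being null. Second, near a point of $C_i\setminus C_{i+1}$ some order-$i$ partial $w$ of a component vanishes on $C_i$ yet has $\nabla w\ne 0$, so $\{w=0\}$ is locally a smooth hypersurface containing $C_i$; since $du$ vanishes on $C_i\subseteq C_1$, every point of $C_i$ is critical for the restriction $u|_{\{w=0\}}$, and the inductive hypothesis on this $(n-1)$-dimensional map kills $u(C_i\setminus C_{i+1})$. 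Third, on $C_s$ all derivatives up to order $s$ vanish, so Taylor's theorem on a compact cube yields $|u(y)-u(x)|\le \eta\,|y-x|^{s}$ for $x\in C_s$, with $\eta$ governed by the uniform continuity of $D^su$ and its vanishing on $C_s$; subdividing into $N^n$ subcubes places $u(C_s)$ inside boxes of total $m$-volume $O(\eta^m N^{\,n-sm})$, which tends to $0$ once $sm\ge n$.

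The genuine difficulty --- and the step I expect to be the main obstacle --- is the accounting of differentiability orders so that the \emph{sharp} hypothesis $k\ge\max\{n-m+1,1\}$ is enough and no extra smoothness is silently consumed. The slicing estimate is free of cost, but the hypersurface restriction in the second estimate loses one derivative for each unit of stratum depth ($u|_{\{w=0\}}$ is only $C^{k-i}$), while the covering estimate wants a high vanishing order $s$ with $sm\ge n$; balancing these by a single fixed threshold $s$ overshoots the sharp bound, so one must organise the induction more carefully, exploiting that each descent lowers both $n$ and the order needed and that the deep strata can be reached without paying separately for every intermediate derivative. Pinning down the indices so the recursion closes at exactly $k=\max\{n-m+1,1\}$, rather than at the cruder value adequate for the $C^\infty$ case, is where essentially all the work lies; that the bound cannot be relaxed is shown by Whitney's $C^1$ function on $\mathbb{R}^2$ whose critical values fill an interval.
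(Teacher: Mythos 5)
The paper offers no proof of this statement: Sard's theorem is quoted there as a classical result (with the Hausdorff-measure refinement attributed to Sard's 1965 paper), purely as motivation for the pseudoholomorphic analogues that follow. So your proposal can only be measured against the classical literature, not against anything in the paper.

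Your outline is the standard Morse--Sard induction --- reduction to charts, the $n<m$ volume estimate, Fubini on $S_u\setminus C_1$, hypersurface restriction on the intermediate strata $C_i\setminus C_{i+1}$, and Taylor plus cube-counting on $C_s$ --- and each of the three estimates is correctly set up. But, as you yourself concede, the argument as organised does not close at the stated hypothesis $k\ge\max\{n-m+1,1\}$, and that concession is fatal: the sharp regularity is the entire content of the theorem as quoted. Concretely, in your second estimate the hypersurface $\{w=0\}$ is only $C^{k-i}$ (since $w$ is an order-$i$ partial of $u$), so $u$ restricted to it is only $C^{k-i}$, while the inductive hypothesis in dimension $n-1$ demands $C^{k'}$ with $k'\ge\max\{n-m,1\}$; with the sharp $k=n-m+1$ this admits only $i=1$, whereas the deep-stratum estimate needs the threshold $s\ge n/m$. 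Already for $(n,m)=(5,2)$ one has $k=4$, $s=3$, and the stratum $C_2\setminus C_3$ is covered by neither mechanism. What you have actually proved is the $C^\infty$ (or large-$k$) version of the theorem; closing the gap requires a genuinely different treatment of the intermediate strata, as in Sard's original argument or its modern streamlinings, which replace the restriction-to-a-hypersurface step by a direct covering estimate that exploits the order of vanishing of the derivatives \emph{along} each stratum, so that no derivative of $u$ beyond order $k$ is ever taken. Identifying the obstacle is not the same as overcoming it, so the proposal as written does not establish the stated theorem.
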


A more general form of  Sard's theorem \cite{Sa2} says that the image of the set of points $x\in X$ such that the differential $du_x$ has rank strictly less than $r$ has $r$-dimensional Hausdorff measure zero, under the same regularity assumption on $u$.

Another result is Thom's and Boardman's fundamental work \cite{Th, Boa}. Although it would need more space to phrase its precise form, the core statement could be summarized as the following. 

\begin{theorem}[Thom and Boardman] 
For generic smooth maps between smooth manifolds, the singularity subsets with given degeneracy data are  submanifolds of the domain. 
\end{theorem}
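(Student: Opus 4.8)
The plan is to reduce the statement to Thom's jet transversality theorem together with the fact that the Thom--Boardman loci are submanifolds of an appropriate jet bundle. Fix smooth manifolds $X$ and $M$ and let $J^k(X,M)$ denote the bundle of $k$-jets of maps from $X$ to $M$; this is a smooth finite-dimensional manifold fibred over $X\times M$, and any smooth map $u$ has a canonical $k$-jet extension $j^k u\colon X\to J^k(X,M)$. The degeneracy data of the theorem will be encoded by a non-increasing symbol $I=(i_1,\dots,i_k)$, and the corresponding singularity subset of $u$ will be recovered as the preimage $(j^k u)^{-1}(\Sigma^I)$ of a universal locus $\Sigma^I\subset J^k(X,M)$.

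First I would treat the first-order symbol. The corank-$i$ locus $\Sigma^i\subset J^1(X,M)$, consisting of $1$-jets whose differential has kernel of dimension exactly $i$, is a submanifold: fibrewise it is the set of matrices of a fixed rank, which is the standard smooth determinantal stratum, and its codimension in $J^1(X,M)$ is $i(\dim M-\dim X+i)$. By the preimage theorem, whenever $j^1u$ is transverse to $\Sigma^i$ the set $S^i_u:=(j^1u)^{-1}(\Sigma^i)$ is a submanifold of $X$; and transversality holds for a residual (hence generic) set of $u$ by Thom's jet transversality theorem.

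The main obstacle is the inductive, higher-order part, which is exactly the content of Boardman's analysis. Naively one wants to form the locus $\Sigma^{i_1,i_2}(u)$ by restricting $u$ to $S^{i_1}_u$ and taking the corank-$i_2$ locus of that restriction, and so on; but this recipe is map-dependent and not obviously a well-defined submanifold. The key step I would carry out, following Boardman, is to make the construction intrinsic at the level of jets: one introduces the \emph{intrinsic derivative} along the kernel distribution and uses it to cut out $\Sigma^I\subset J^k(X,M)$ by a nested sequence of rank conditions, proving by induction on the length of $I$ that each $\Sigma^I$ is a locally closed submanifold of a computable codimension (the Boardman codimension formula). This is where essentially all the work lies, since one must verify that the successive symbol conditions meet in the expected transverse fashion so that the stratum is smooth.

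Granting that each $\Sigma^I$ is a submanifold, the theorem follows by a single application of the jet transversality theorem: for a residual set of smooth maps $u$ the jet extension $j^k u$ is simultaneously transverse to all the finitely many nonempty strata $\Sigma^I$. The preimage theorem then identifies each singularity subset $S^I_u=(j^k u)^{-1}(\Sigma^I)$ as a submanifold of $X$ of codimension equal to that of $\Sigma^I$, which is the assertion. A mild point to keep track of is that genericity must be arranged for all symbols at once and that the strata fit together into the Thom--Boardman stratification of $J^k(X,M)$; this is handled by Baire category applied to the resulting family of transversality conditions.
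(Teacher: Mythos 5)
The paper does not prove this statement: it is quoted as classical background, in deliberately informal form (``the core statement could be summarized as the following''), with the proof delegated entirely to the cited works of Thom and Boardman. So there is no in-paper argument to compare against, and the only question is whether your outline is a correct rendering of the standard proof. It is. The two-step structure you describe --- (i) exhibit the Thom--Boardman loci $\Sigma^I$ as locally closed submanifolds of the $k$-jet bundle $J^k(X,M)$ of known codimension, and (ii) apply Thom's jet transversality theorem plus Baire category to make $j^ku$ simultaneously transverse to the finitely many nonempty strata, so that each $S^I_u=(j^ku)^{-1}(\Sigma^I)$ is a submanifold by the preimage theorem --- is exactly the classical argument. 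You also correctly identify the genuine difficulty: the naive iterative recipe (restrict $u$ to $S^{i_1}_u$, take the corank-$i_2$ locus of the restriction, etc.) is map-dependent and circular, and Boardman's contribution was to replace it with an intrinsic jet-level construction via the intrinsic derivative along the kernel distribution.

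The one caveat is that step (i) --- proving smoothness of the $\Sigma^I$ and the codimension formula --- is where essentially all the content of Boardman's paper lives, and your proposal describes this step rather than carries it out. As a self-contained proof it is therefore incomplete; as a faithful account of how the cited theorem is established, and at the level of precision at which the paper itself states the result, it is accurate and I see no incorrect step in it.
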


The two theorems together describe a nice structure for the singularities of a generic differential map. We look for the corresponding results of singularities for an arbitrary pseudoholomorphic map between two almost complex manifolds. We start from the simplest case, {\it i.e.} when $X=\Sigma$ is a Riemann surface. In this situation, we have the well known fact that for non-constant pseudoholomorphic curves $u: \Sigma\rightarrow (M, J)$, $\mathcal S_u$ is a set of isolated points.

In \cite{ZCJM}, we obtain the following finer description of the structure of pseudoholomorphic maps between closed almost complex $4$-manifolds. The first part corresponds to Thom and Boardman's theorem and the second part is a finer version of Sard's theorem. 

\begin{theorem}\label{sing4to4}
Let $u: (X, J)\rightarrow (M, J_M)$ be a somewhere immersed pseudoholomorphic map between closed connected almost complex $4$-manifolds. Then 
\begin{itemize}
\item the singularity subset of $u$ supports a $J$-holomorphic $1$-subvariety; 
\item other than finitely many points $x\in M$, where $u^{-1}(x)$ is the union of a $J$-holomorphic $1$-subvariety and finitely many points, the preimage of each point is a set of finitely many points.
\end{itemize}
\end{theorem}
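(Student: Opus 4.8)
The plan is to treat the two bullet points separately, with the first (Thom--Boardman type) statement providing the structural backbone for the second (Sard type) one. For the singularity subset $S_u$, the idea is to realize it as the zero locus of a natural section and then feed it into Taubes' Theorem~\ref{pcaholo}. Since $u$ is pseudoholomorphic, $du_p\colon T_pX\to T_{u(p)}M$ is complex linear, so its top exterior power gives a smooth section $s=\det_{\C}(du)$ of the complex line bundle $L=K_X\otimes u^*K_M^{-1}$, whose zero set is exactly $S_u$ (the locus where $du$ has complex rank $\le 1$, i.e.\ real rank $\le 2$). In the integrable case $s$ would be a holomorphic section and $S_u$ an honest divisor; the whole point is to recover this conclusion without holomorphicity of $s$. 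To apply Theorem~\ref{pcaholo} I must check that $S_u$ is closed with finite $2$-dimensional Hausdorff measure and carries a positive cohomology assignment in the sense of Definition~\ref{PCA}.

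Closedness is immediate, and finiteness of the Hausdorff measure I would obtain exactly as in the sketch of Theorem~\ref{ICdim4}: a unique continuation statement for pseudoholomorphic maps shows that $S_u$ has empty interior (here is where \emph{somewhere immersed} enters, ruling out $s\equiv 0$), after which one foliates a neighbourhood of each point of $X$ by $J$-holomorphic disks, bounds the number of zeros of $s$ on each leaf using Gromov's positivity (Theorem~\ref{Gromov}), and applies the coarea formula. The positive cohomology assignment is where the toy model of Section~2 is meant to be used directly: to an admissible disk $\sigma\colon D\to X$ I assign the toy-model multiplicity $I(\sigma)$ of the zeros of the pulled-back section $\sigma^*s$, computed after a smooth trivialisation of $L$ over $\sigma(D)$ (independent of the trivialisation since $D$ is contractible). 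Properties (1)--(4) of Definition~\ref{PCA} are then inherited verbatim from the corresponding four properties of the toy-model multiplicity.

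The \textbf{main obstacle} is property (5): when $\sigma$ is a $J$-holomorphic embedding meeting $S_u$, one must show $I(\sigma)>0$. This is the step where genuine positivity, rather than mere count-invariance, has to be produced, and it cannot come from the toy model alone, since $s$ is not pseudoholomorphic. My plan is to analyse $s$ restricted to a $J$-holomorphic disk $C$: writing a complex frame $e_1,e_2$ of $T_{\C}X$ along $C$ with $e_1$ tangent to $C$, one has $s=du(e_1)\wedge du(e_2)$, and the first-order (Cauchy--Riemann) nature of the pseudoholomorphic equation should force the leading term of $s|_C$ at a zero to be of positive winding type, the higher-order antiholomorphic corrections being harmless. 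The cleanest way to package this is to compare the zeros of $s|_C$ with honest intersections of auxiliary $J$-holomorphic curves in $M$ and invoke the positivity of intersection of Theorem~\ref{Gromov} (or Theorem~\ref{MicW}); making this comparison precise, uniformly over all embedded $J$-holomorphic test disks, is the technical heart of the argument. Granting property (5), Theorem~\ref{pcaholo} yields that $S_u$ supports a $J$-holomorphic $1$-subvariety.

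For the second bullet point I would exploit the structure just obtained. Away from $S_u$ a pseudoholomorphic map between $4$-manifolds has $du$ of full rank, hence is a local diffeomorphism, so for every $x\notin u(S_u)$ the fibre $u^{-1}(x)$ is discrete and, by compactness of $X$, finite. Any positive-dimensional piece of a fibre $u^{-1}(x)$ is a set on which $u$ is constant, so $du$ has nontrivial kernel there and the piece lies in $S_u$; since $S_u$ supports a $1$-subvariety, such a piece is a curve, namely a union of irreducible components of that subvariety which are \emph{contracted} by $u$ to the single point $x$. As that subvariety has only finitely many irreducible components, only finitely many points $x\in M$ can arise as images of contracted components, and these are precisely the finitely many exceptional points of the statement. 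For such an $x$, the contracted components form a $J$-holomorphic $1$-subvariety, while the remaining points of $u^{-1}(x)$ are isolated in the fibre -- coming either from the local-diffeomorphism region or from the non-contracted components of $S_u$, each of which meets $u^{-1}(x)$ in finitely many points -- and hence finite by compactness. Beyond Part~1, I expect the only delicate point to be verifying that the non-curve part of an exceptional fibre is genuinely $0$-dimensional, which again follows from the immersion/positivity picture near the non-contracted components of $S_u$.
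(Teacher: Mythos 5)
Your overall strategy for the first bullet is the right one -- realize $\mathcal S_u$ as the zero locus of the complex determinant of $du$ and feed it into Taubes' machinery -- and you have correctly located the crux: property (5) of Definition~\ref{PCA}, the genuine positivity. But that is exactly the step you do not supply. Your stated hope, that ``the first-order (Cauchy--Riemann) nature of the pseudoholomorphic equation should force the leading term of $s|_C$ at a zero to be of positive winding type,'' is not an argument: the section $s=\det_{\C}(du)$ of $K_X\otimes u^*K_M^{-1}$ is merely smooth, and a smooth section restricted to a $J$-holomorphic disk can a priori have zeros of negative index, so nothing forces positivity without additional structure. The paper closes this gap with a specific device you are missing: it works in the determinant line bundle $\mathcal L$ of the $1$-jet bundle $J^1(X,M)$ over $X\times M$, equipped with the \emph{canonical almost complex structure} $\mathcal J$ of Gauduchon and Lempert--Sz\"oke. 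That structure is engineered precisely so that the prolongation $u_{\mathcal L}(x)=(x,u(x),\det(du_x)_{\C})$ is a genuinely \emph{pseudoholomorphic} map $(X,J)\to(\mathcal L,\mathcal J)$, and then $\mathcal S_u=u_{\mathcal L}^{-1}(X\times M\times\{0\})$ is the preimage of a codimension-two almost complex submanifold under a pseudoholomorphic map, so Theorem~\ref{ICdim4} applies wholesale -- Hausdorff measure bound, cohomology assignment, and positivity all come for free from the already-established intersection theory. Without this (or an equivalent substitute) your property (5) remains unproven, and it is the technical heart of the theorem, not a detail.

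For the second bullet your outline is close to the paper's (``the first part plus a careful discussion on a stratification of $X$''), and your observation that a positive-dimensional fibre piece must consist of irreducible components of $\mathcal S_u$ contracted by $u$ is correct and gives the finiteness of the exceptional set of points $x$. One point you wave at but do not settle: after removing the contracted components, the remaining part of an exceptional fibre is an open, not closed, subset of the compact fibre, so ``all points isolated'' does not immediately give finiteness -- you must rule out a sequence of isolated fibre points accumulating onto a contracted component. This is what the stratification argument in the paper is for, and it deserves more than the single sentence you give it.
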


We would like to explain how our intersection theory is applied to show the singularity subset $\mathcal S_u$ supports a $J$-holomorphic $1$-subvariety. The idea is to apply the intersection theory Theorem \ref{ICdim4} to the determinant bundle $\mathcal L$ of the $1$-jets bundle of pseudoholomorphic mappings $J^1(X, M)$. The manifold $J^1(X, M)$ is the  total space of the complex vector  bundle $\mathcal E$ over $X\times M$, whose fiber is the complex vector space of all complex linear maps $L: T_xX\rightarrow T_mM$ regarding the almost complex structures $J|_x$ and $J_M|_m$. Its determinant bundle $\mathcal L$ is a complex line bundle over $X\times M$ obtained by replacing fibers of $\mathcal E$ by $\det L: \Lambda_{\mathbb C}^2T_xX \rightarrow \Lambda^2_{\mathbb C}T_mM$. The bundle $\mathcal L$ has a canonical almost complex structure $\mathcal J$ induced from any of the canonical almost complex structures on $J^1(X, M)$ as in \cite{Gau} or \cite{LS}. Moreover, the map $u$ induces a pseudpholomorphic map  $u_{\mathcal L}(x)=(x, u(x), \det (du_x)_{\mathbb C})$ from $(X, J)$ to $(\mathcal L, \mathcal J)$. 

Under this description, the singularity subset $\mathcal S_u=u_{\mathcal L}^{-1}(X\times M\times \{0\})$. Apply Theorem \ref{ICdim4} to the codimension two almost complex submanifold $X\times M\times \{0\}$ in the ambient space  $(\mathcal L, \mathcal J)$ and because $u$ is somewhere immersed we have $u_{\mathcal L}(X)\nsubseteq Z_2$.  Hence, the singularity subset $\mathcal S_u$ supports a $J$-holomorphic $1$-subvariety.  The second part of Theorem \ref{sing4to4} is built on the first part plus a careful discussion on a stratification of $X$.

A closer study at degree one pseudoholomorphic maps between almost complex $4$-manifolds shows that they are eventually birational morphisms in pseudoholomorphic category. First, Zariski's main theorem still holds for pseudoholomorphic maps. Moreover, we have a very concrete description of the exceptional set. It is summarized in the following.

\begin{theorem}\label{introphblowdown}
Let $u: (X, J) \rightarrow (M, J_M)$ be a degree one pseudoholomorphic map between closed connected almost complex $4$-manifolds such that $J$ is almost K\"ahler. Then there exists a subset $M_1\subset M$, consisting of finitely many points, with the following significance:
\begin{enumerate}
\item The restriction $u|_{X\setminus u^{-1}(M_1)}$ is a diffeomorphism. 
\item At each point of $M_1$, the preimage is an exceptional curve of the first kind.  
\item $X\cong M\#k\overline{\mathbb CP^2}$ diffeomorphically, where $k$ is the number of irreducible components of the $J$-holomorphic $1$-subvariety $u^{-1}(M_1)$.
\end{enumerate}
\end{theorem}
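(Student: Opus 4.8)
The plan is to read off the structure of $u$ from the singularity analysis of Theorem~\ref{sing4to4} and then upgrade the resulting topological picture to a sequence of blow-downs using positivity of intersections and the almost K\"ahler hypothesis. First I would take $M_1$ to be precisely the finite set of points of $M$ supplied by the second bullet of Theorem~\ref{sing4to4}, namely those $x$ for which $u^{-1}(x)$ contains a positive-dimensional part. By the first bullet, the singularity subset $\mathcal S_u$ supports a $J$-holomorphic $1$-subvariety $C$; the crucial point is that \emph{every} irreducible component of $C$ is contracted by $u$ to a point of $M_1$. This is where the degree one hypothesis enters: since $u$ is pseudoholomorphic between almost complex $4$-manifolds all local mapping degrees are positive, so a global degree of $1$ forces $u$ to be generically injective. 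If some component $V\subset \mathcal S_u$ were mapped \emph{onto} a curve $D\subset M$ rather than contracted, then $u$ would ramify along $V$ and the local degree over a generic point of $D$ would be at least $2$, contradicting generic injectivity. Hence $\mathcal S_u\subset u^{-1}(M_1)$ and $u$ is a local diffeomorphism on $X\setminus u^{-1}(M_1)$.

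Part (1) then follows by a covering-space argument: $u\colon X\setminus u^{-1}(M_1)\to M\setminus M_1$ is a proper local diffeomorphism between connected manifolds (connectedness of the target is immediate, as removing a finite set from a connected $4$-manifold leaves it connected), hence a covering map, and a covering of degree one is a diffeomorphism. To handle the fibers over $M_1$ I would invoke the pseudoholomorphic Zariski main theorem recorded above to conclude that each $E_p:=u^{-1}(p)$, $p\in M_1$, is connected; together with Theorem~\ref{sing4to4} this presents $E_p$ as a connected compact $J$-holomorphic $1$-subvariety collapsed to the single smooth point $p\in M$.

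For part (2) the task is to identify each $E_p$ as an exceptional curve of the first kind. Here the almost K\"ahler hypothesis is essential: it provides a symplectic form $\omega$ taming $J$, so that the components of $E_p$ are $\omega$-symplectic $J$-holomorphic curves to which the adjunction formula and positivity of intersections (Theorems~\ref{Gromov} and~\ref{MicW}) apply. Since contracting $E_p$ via $u$ yields the smooth manifold point $p$, a local model analysis combined with the adjunction formula should force each irreducible component to be a smoothly embedded sphere of self-intersection $-1$ and the whole configuration to be blow-downable; the symplectic structure lets me appeal to the theory of symplectic $(-1)$-spheres and symplectic blow-down to make the contraction genuine rather than merely topological. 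Finally, part (3) follows by induction on the number of exceptional components: blowing down one symplectic $(-1)$-sphere at a time realizes $X$ as an iterated blow-up of $M$, each blow-up contributing exactly one $\overline{\mathbb{CP}^2}$ summand and exactly one irreducible component of $u^{-1}(M_1)$, so that $X\cong M\# k\,\overline{\mathbb{CP}^2}$ with $k$ the total number of components.

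The hard part will be part (2): in the integrable case the contractibility of $E_p$ is Castelnuovo's criterion, but in the almost complex category we have no algebraic contraction at our disposal, so the self-intersection and genus bookkeeping must be extracted purely from the adjunction formula and positivity of intersections, and one must rule out higher-genus, singular, or multiply-covered components and show the configuration is exactly a tree of $(-1)$-spheres. Matching the count $k$ of irreducible components with the number of $\overline{\mathbb{CP}^2}$ summands is the subtle end of this bookkeeping, and it is precisely here that the interplay between the intersection theory of Section~2 and the almost K\"ahler structure does the decisive work.
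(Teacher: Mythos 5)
Your outline of parts (1) and (3) is broadly in line with the paper: $M_1$ is extracted from the second bullet of Theorem~\ref{sing4to4}, the degree one hypothesis rules out branching, $u$ is a diffeomorphism off $u^{-1}(M_1)$ by a properness/covering argument, fibers over $M_1$ are connected by the pseudoholomorphic Zariski main theorem, and the count $k$ comes from blowing down one component at a time. The genuine gap is exactly where you flag it, in part (2), and it is not merely a matter of ``bookkeeping with adjunction and positivity of intersections.'' Those tools are local and curve-by-curve: they can control the genus of individual components once you know their self-intersections, but they give you no handle on the decisive global input, namely that the intersection matrix of the irreducible components of $E_p=u^{-1}(p)$ is \emph{negative definite} (Grauert's criterion). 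Without negative definiteness you cannot conclude blow-downability, and your stronger claim that each component is an embedded $(-1)$-sphere is in fact false: an exceptional curve of the first kind is in general a tree of rational curves containing components of self-intersection $\le -2$ (blow up a point on an exceptional divisor to see a $(-2)$-curve plus a $(-1)$-curve).

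The paper's mechanism for negative definiteness is a symplectic cut-and-paste, and this is the real reason the almost K\"ahler hypothesis appears. One takes a $J_M$-convex ball neighborhood $\mathcal N_p$ of $p\in M_1$; its boundary carries the tight contact structure on $S^3$, and $u^{-1}(\partial\mathcal N_p)$ is contactomorphic to it. Capping $\mathcal N_p$ by the concave neighborhood of a $+1$-sphere in $\mathbb CP^2$ and gluing to $X$ along this $S^3$ via McCarthy--Wolfson produces a closed symplectic $4$-manifold containing a symplectic $+1$-sphere $S$, hence a rational surface with $b^+=1$ by McDuff. Since $E_p$ is disjoint from $S$, the intersection form restricts as $Q_{E_p}\oplus(1)$ inside a form of signature $(1,k)$, forcing $Q_{E_p}$ to be negative definite. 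Only then does one invoke the classification of negative definite configurations of rational curves (with $S^3$ plumbing boundary) in \cite{LM, Sha} to identify $E_p$ as an exceptional curve of the first kind. Your proposal contains no substitute for this step, so as written part (2) does not go through.
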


The degree one condition guarantees there are no branchings described in part two of Theorem \ref{sing4to4}. Moreover, a delicate study can find the subset $M_1\subset M$ where the preimage of any point in it is a connected $J$-holomorphic $1$-subvariety, which should be understood as a version of Zariski's main theorem in our setting.

Roughly, a connected $J$-holomorphic $1$-subvariety is called an exceptional curve of the first kind if its configuration is equivalent to the empty set through topological blowdowns.  In particular, it is a connected $J$-holomorphic $1$-subvariety whose irreducible components are rational curves and the dual graph is a tree. This latter statement could be shown by a topological argument relating the neighborhoods of a point in $M_1$ and its preimage.

To complete the second part, we need to establish Grauert's criterion for exceptional set, {\it i.e.} the intersection matrix of the irreducible components of the exceptional set is negative definite. We will use a symplectic argument. Let $Y$ be a $3$-dimensional separating submanifold of a symplectic $4$-manifold $(X, \omega_X)$, suppose $Y$ admits a fixed point free circle action whose orbits lie in the null direction of $\omega_X|_Y$. Then we let $X^-$ be the piece for which $Y$ is the $\omega_X$-convex boundary and $X^+$ the other piece. 

For any point $m\in M_1$, we take an open ball neighborhood $\mathcal N_m$ whose boundary is $J_M$-convex and $\mathcal N_m\cap M_1=\{m\}$. We can choose $\mathcal N_m$ such that it is contained in a neighborhood of $m$ such that there exists a symplectic form compatible with $J_M$. The induced contact structure on $\partial \mathcal N_m$ is the unique tight contact structure on $S^3$. This is our $Y$ whose fixed point free $S^1$-action is induced by Reeb orbits. Hence, $\mathcal N_m$ could be capped (by a concave neighborhood of $+1$-sphere) to a symplectic $\mathbb CP^2$. This is our $X_2$. 

We take $X_1$ to be our $X$ with the $J$-compatible symplectic form $\omega_X$. The preimage $u^{-1}(\partial\mathcal N_m)$ is diffeomorphic to $S^3$. Moreover, since $u$ is pseudoholomorphic and $u|_{u^{-1}(\partial\mathcal N_m)}$ is a diffeomorphism, the $J$-lines provide a contact structure on $u^{-1}(\partial\mathcal N_m)$ which is contactomorphic to the one on $\partial \mathcal N_m$ induced by $J_M$-lines. Hence, we can apply McCarthy-Wolfson's gluing theorem \cite{McW} to obtain a symplectic manifold $\tilde X=X_1^-\cup_Y X_2^+$. This is why the almost K\"ahler condition comes into the statement, although I think this assumption should be removable. 

Since $X_2^+$ contains a symplectic sphere $S$ of self-intersection $1$, the symplectic $4$-manifold $\tilde X$ is diffeomorphic to $\mathbb CP^2\# k\overline{\mathbb CP^2}$, {\it i.e.} a rational $4$-manifold \cite{McD90}. In particular, $b^+(\tilde X)=1$. Since $C_m=\pi^{-1}(m)$ is disjoint from $S$, we have $Q_{C_m\cup S}=Q_{\C_m}\oplus (1)$ as (a sub matrix of) the intersection matrix of $\tilde X$. It implies the matrix $Q_{C_m}$ corresponding to $C_m$ is negative definite.  Finally, to show it is an exceptional curve of first kind, we apply the classification of these type of curve configurations in \cite{LM, Sha}.

\subsection{Some further discussions}
First, the resolution of Question \ref{pcaholoco2} would generalize the statement of Theorem \ref{sing4to4} to somewhere immersed pseudoholomorphic maps between closed connected almost complex $2n$-manifolds, where $1$-subvarieties would be replaced by $(n-1)$-subvarieties correspondingly. However, the argument of Theorem \ref{introphblowdown} is not extended to higher dimensions as it uses some peculiar properties in dimension $4$, {\it e.g.} any rational surface has $b^+=1$. Meanwhile, to study the singularity subset of a pseudoholomorphic maps between manifolds of different dimensions, we need the intersection theory of excess intersection. 

These higher dimensional generalizations will be very important in Ruan's symplectic birational geometry. As a crucial step in the program, we need to classify symplectic birational morphisms. Li and Ruan had thought certain class of degree one maps might serve such a role. As observed in \cite{Zadv}, a plain differentiable degree one map will not preserve the birational class in general. Thus these degree one maps must regard the symplectic structures. A potential candidate of a symplectic birational morphism is a degree one $(J_1, J_2)$-holomorphic map $u: (X, \omega_X)\rightarrow (M, \omega_M)$ where $J_1$ and $J_2$ are tamed by $\omega_X$ and $\omega_M$. Hence, any work on degree one pseudoholomorphic maps would lead to applications in (this version of) symplectic birational morphisms. 

In the definition of (irreducible) pseudoholomorphic subvarieties, we require the maps to be somewhere immersed. It is natural to ask whether the image of an arbitrary pseudoholomorphic map $f: (X, J_X)\rightarrow (M, J)$ from a closed almost complex manifold $X$ is a $J$-holomorphic subvariety. By a classical result of Remmert \cite{Rem}, this is true when both $J_X$ and $J$ are integrable. In fact, only the integrability of $J$ is needed, as observed in \cite{CZ2}. When there is no integrability in hand, Micallef and I are able to show the following. 

\begin{theorem}
Let  $f: X\rightarrow M$ be a pseudoholomorphic map from a connected closed almost complex manifold $(X, J_X)$ to a connected almost complex manifold $(M, J)$. If $\min\{\dim X, \dim M\} \le 4$, then $f(X)$ is an irreducible $J$-holomorphic subvariety.    
\end{theorem}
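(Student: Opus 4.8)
The plan is to organise the argument around the maximal complex rank of $df$ and to reduce every case to a regularity statement already provided in Sections 2 and 3. Set $k:=\max_{x\in X}\mathrm{rank}_{\mathbb C}\,df_x$. Since $f$ is pseudoholomorphic, $df_x$ is complex linear, so $\mathrm{rank}_{\mathbb C}\,df_x\le\min\{\dim_{\mathbb C}X,\dim_{\mathbb C}M\}$, and the hypothesis $\min\{\dim X,\dim M\}\le 4$ forces $k\le 2$. Let $U=\{x:\mathrm{rank}_{\mathbb C}\,df_x=k\}$, which is open. The first thing I would record is that $X\setminus U$ has real codimension at least two: the loci where the rank drops are cut out inside the total space of the determinant bundle $\mathcal L$ by vanishing of the relevant determinants of $u_{\mathcal L}$, so, exactly as in the proof of Theorem \ref{sing4to4} via Theorem \ref{ICdim4}, they support $J_X$-holomorphic subvarieties of positive codimension. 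Consequently $U$ is connected, $f|_U$ has constant rank $k$, the image $f(U)$ is a connected immersed $J$-holomorphic submanifold of complex dimension $k$, and $f(X)=\overline{f(U)}$. This connectedness is what will upgrade ``supports a subvariety'' to ``irreducible'' at the end of each case.

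Three cases are immediate. If $k=0$ then $f$ is constant on the connected $X$ and $f(X)$ is a single point, an irreducible $0$-subvariety. If $k=\dim_{\mathbb C}X$ then $df$ is injective at a maximal-rank point, so $f$ is somewhere immersed and $f(X)$ is by definition an irreducible $J$-holomorphic subvariety; this covers $\dim_{\mathbb C}X=1$ (non-constant curves) and $\dim_{\mathbb C}X=2,\ k=2$. If $k=\dim_{\mathbb C}M$, then $f|_U$ is a submersion, so $f(U)$ is open in $M$, while $\partial f(X)\subseteq f(X\setminus U)$ lies in the image of the rank-$<k$ locus, which by the refined Sard theorem has $\,2k$-dimensional (i.e. $(\dim_{\mathbb R}M)$-dimensional) Hausdorff measure zero and hence Hausdorff dimension at most $\dim_{\mathbb R}M-2$; a closed set of such dimension cannot locally disconnect the connected manifold $M$, forcing $f(X)=M$ (and $M$ compact), which is its own irreducible subvariety. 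After these, the only remaining possibility is $k<\dim_{\mathbb C}X$ and $k<\dim_{\mathbb C}M$, which together with $k\le 2$ and $\min\{\dim_{\mathbb C}X,\dim_{\mathbb C}M\}\le 2$ forces $k=1$ and $\min\{\dim_{\mathbb C}X,\dim_{\mathbb C}M\}=2$.

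In this remaining collapsing case I would split according to which side carries the bound. When $\dim_{\mathbb C}M=2$, the image $C:=f(X)$ is a closed subset of an almost complex $4$-manifold, and I would verify the two hypotheses of Taubes' Theorem \ref{pcaholo}. Finiteness of the $2$-dimensional Hausdorff measure of $C$ follows from compactness of $X$ together with the coarea formula applied to the constant-rank map $f|_U$, the energy $\int_X|df|^2$ bounding the area of $C$ counted with multiplicity. A positive cohomology assignment $I(\sigma)$ on admissible test disks $\sigma\colon D\to M$ is then defined by assigning to $\sigma$ the intersection number of $\sigma(D)$ with $C$ computed inside $M$; properties (1)--(4) of Definition \ref{PCA} are formal consequences of homotopy invariance and functoriality of this intersection number, and the crucial positivity (5) is precisely the positivity of intersection of pseudoholomorphic curves in dimension four supplied by Theorems \ref{Gromov} and \ref{MicW}. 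Theorem \ref{pcaholo} then shows $C$ supports a $J$-holomorphic $1$-subvariety, and connectedness of $f(U)$ makes it irreducible.

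When instead the bound is on the source, $\dim_{\mathbb C}X=2$ with $M$ possibly high-dimensional, Taubes' theorem is unavailable because its ambient manifold must be four-dimensional, so I would normalise on the source side. Here $f(U)$ is a finite-area immersed $J$-holomorphic curve in $M$, while $f(X)\setminus f(U)\subseteq f(X\setminus U)$ has zero $1$-dimensional Hausdorff measure by the refined Sard theorem; a removable-singularity argument for finite-area pseudoholomorphic curves then presents $f(X)$ as the image of a pseudoholomorphic map $g\colon\Sigma\to M$ from a compact connected Riemann surface, again an irreducible $1$-subvariety. The main obstacle of the whole proof is concentrated in these two collapsing cases: establishing positivity (5) of the cohomology assignment in the target-side case and the removability across the (a priori Cantor-type) zero-length singular set in the source-side case. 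Both amount to converting the purely set-theoretic image into genuine pseudoholomorphic regularity, and it is exactly here that the positivity of intersection in dimension four and the finite-energy structure of pseudoholomorphic curves carry the argument, with the jet-bundle determinant of Section 3 furnishing the indispensable codimension-two control of $X\setminus U$.
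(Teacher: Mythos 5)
The paper does not actually prove this theorem; it is stated as joint work with Micallef and referred to \cite{CZ2}, so your argument can only be judged on its own terms. Your skeleton is reasonable: stratifying by the generic complex rank $k\le 2$, disposing of $k=0$, $k=\dim_{\mathbb C}X$ (where $f$ is somewhere immersed and there is nothing to prove) and $k=\dim_{\mathbb C}M$, and isolating the collapsing case $k=1$ with $\min\dim_{\mathbb C}=2$ correctly locates where the content lies. But the two analytic steps you need in the collapsing case are not proved, and the justifications offered for them are either false or circular.

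First, finiteness of $\mathcal{H}^2(f(X))$. The principle ``energy bounds area with multiplicity'' holds for maps from \emph{surfaces}; for a rank-one map from a $2n$-manifold the coarea formula compares $\int_X|df|^2$ to an integral over $f(U)$ weighted by the volumes of the $(2n-2)$-dimensional fibres, so extracting a bound on $\mathcal{H}^2(f(U))$ would require a positive \emph{lower} bound on fibre volumes, which fails as one approaches $X\setminus U$. The constant rank theorem makes $f(U)$ locally an embedded $J$-holomorphic disk, but nothing you say excludes infinitely many sheets of uniformly positive area accumulating in $M$; ruling this out is one of the genuine difficulties and, in the methodology of this paper, is handled by foliation-by-disks counting arguments, not energy. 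Second, the positive cohomology assignment: at this stage $C=f(X)$ is only a closed set, so ``the intersection number of $\sigma(D)$ with $C$ computed inside $M$'' is undefined. In Theorem \ref{ICdim4} the assignment is the intersection number of the pushed-forward disk with the honest submanifold $Z_2$, which has a normal bundle; here there is nothing on the other side of the pairing, and constructing $I(\sigma)$ (say, from a local degree of $f$ over the sheets of $C$) together with its homotopy invariance and positivity is essentially the theorem itself, not a formal consequence. The source-side case $\dim_{\mathbb C}X=2$ is worse: ``a removable-singularity argument presents $f(X)$ as the image of a compact connected Riemann surface'' restates the conclusion, since removable singularity theorems apply to finite-energy maps from punctured disks, not to the closure of an immersed curve of unknown total area along an $\mathcal{H}^1$-null set. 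Two further points need care: your codimension-two control of the rank-drop locus via the determinant/degeneracy-locus machinery is only established in the paper for four-dimensional domains and for the top minor (the higher-dimensional subvariety structure rests on the open Question \ref{pcaholoco2}), and connectedness of $f(U)$ does not by itself give irreducibility of the limiting subvariety, since connected nodal configurations are reducible.
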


It is also important to study pseudoholomorphic rational maps $f: X\dashrightarrow Y$, {\it i.e.} a proper pseudoholomorphic map $\phi: X\setminus B\rightarrow Y$ where $B$ has Hausdorff codimension at least two, for example, when it is a pseudoholomorphic subvariety. 
For example, it is crucial in obtaining higher dimensional Hartogs extension in almost complex setting and studying the properties of almost complex Kodaira dimensions (see Section \ref{aKod} and \cite{CZ}). Even when $Y$ is complex, this is not totally understood, however we know the image $\phi(X\setminus B)$ has the structure of analytic subvariety locally near any point in the image following from \cite{CZ2}. By abuse of notation, we call the image an open analytic subvariety. It is fundamental to know the following. 

\begin{question}
Are the closure of $f(X\setminus B)$ and complement of its image subvarieties in $Y$?
\end{question}

Finally, the discussions regarding the singular subset of a pseudoholomorphic map between two manifolds could be extended to degeneracy loci of maps between vector bundles in a strait-forward manner. For any complex vector bundles $E$ and $F$ over an almost complex manifold $X$ endowed with bundle almost complex structures, and a pseudoholomorphic bundle morphism $u$ between them,  we can define the degeneracy locus of rank $r$ of $u$ to be $D_r(u)=\{x\in X| \hbox{rank} u(x)\le r\}$ for any integer $r<\min\{\hbox{rank} E, \hbox{rank}  F\}$.

In particular, the proof of Theorem \ref{sing4to4} leads to the following without any extra difficulty. 

\begin{theorem}
Let $ (E, \mathcal J_E)$ and $(F, \mathcal J_F)$ be vector bundles over closed connected almost complex $4$-manifolds $(X, J)$ with $\hbox{rank}E=\hbox{rank} F=r$ and bundle almost complex structures $\mathcal J_E$ and $\mathcal J_F$. Let $u: (E, \mathcal J_E)\rightarrow (F, \mathcal J_F)$ be a  pseudoholomorphic bundle map. 
Then the degeneracy locus $D_{r-1}(u)$ supports a union of $J$-holomorphic a $1$-subvariety and a $0$-subvariety in $X$.
\end{theorem}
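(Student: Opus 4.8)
The plan is to reduce the statement to the codimension-two intersection theory of Theorem~\ref{ICdim4}, mirroring the determinant-bundle reduction used for the first part of Theorem~\ref{sing4to4}. Because $\mathrm{rank}\,E=\mathrm{rank}\,F=r$, a point $x$ lies in $D_{r-1}(u)$ exactly when $u_x\colon E_x\to F_x$ fails to be invertible, i.e. when $\det u_x=0$. I would therefore assemble the fiberwise determinant into a single section of a line bundle: set $L=\mathrm{Hom}(\det E,\det F)$ over $X$ with the bundle almost complex structure $\mathcal J_L$ induced by $\mathcal J_E$ and $\mathcal J_F$, and let $(L,\mathcal J_{\mathrm{tot}})$ denote its total space equipped with the canonical almost complex structure in the sense of \cite{Gau,LS}. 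The fiberwise determinant $A\mapsto\det A$ is an algebraic, hence holomorphic, map $\mathrm{Hom}(E_x,F_x)\to L_x$, so it defines a section $\det u$ of $L$; writing $u_L(x)=(x,\det u_x)$ gives a map $u_L\colon X\to L$ with $D_{r-1}(u)=u_L^{-1}(X_0)$, where $X_0\subset L$ is the zero section, a codimension-two compact connected almost complex submanifold.

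First I would verify that $u_L$ is pseudoholomorphic. The fiberwise determinant intertwines the parallel transport of the connections on $E$ and $F$ with that of the induced connection on $L$, hence carries horizontal subspaces to horizontal subspaces; together with its fiberwise holomorphicity this makes $\det\colon\mathrm{Hom}(E,F)\to L$ pseudoholomorphic, so that $u_L$ — the composite of the pseudoholomorphic section $x\mapsto u_x$ of $\mathrm{Hom}(E,F)$ with $\det$ — is pseudoholomorphic. Excluding the trivial case $\det u\equiv 0$ (which, as $X$ is connected, would force $D_{r-1}(u)=X$), the section $\det u$ is somewhere nonzero, so $u_L(X)\nsubseteq X_0$. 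Theorem~\ref{ICdim4}, applied with $M_1=X$, ambient manifold $M=L$, and $Z_2=X_0$, then shows that $D_{r-1}(u)=u_L^{-1}(X_0)$ supports a $J$-holomorphic $1$-subvariety; this is the main content. The additional $0$-subvariety in the statement is extracted from the deeper stratum $D_{r-2}(u)=\{\,\mathrm{rank}\,u\le r-2\,\}$, of expected complex codimension $4>\dim_{\mathbb{C}}X$, by the stratification argument behind the second part of Theorem~\ref{sing4to4}, which isolates its points as a $0$-dimensional subvariety.

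The hard part will be the pseudoholomorphicity step above: the determinant is a degree-$r$ \emph{nonlinear} fiberwise operation, so, unlike a linear bundle map, it is not automatic that it respects the canonical total-space almost complex structure $\mathcal J_{\mathrm{tot}}$, whose horizontal distribution is corrected by the Nijenhuis tensor of $J$. One must confirm that $\det$ intertwines the canonical structure $\mathcal J_{\mathrm{Hom}}$ on the total space of $\mathrm{Hom}(E,F)$ with $\mathcal J_{\mathrm{tot}}$ to the order needed for Theorem~\ref{ICdim4} — equivalently, that $u_L^{-1}(X_0)$ has finite $2$-dimensional Hausdorff measure and carries a genuine positive cohomology assignment, so that Taubes' Theorem~\ref{pcaholo} applies. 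This is precisely the computation already carried out for the $1$-jet determinant bundle $\mathcal L$ in the proof of Theorem~\ref{sing4to4}; since it is local on $X$ and insensitive to the rank $r$, it transfers here without change, which is why the result follows with no extra difficulty.
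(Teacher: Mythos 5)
Your proposal follows exactly the route the paper indicates: it proves the theorem by the same determinant-bundle reduction used for Theorem \ref{sing4to4}, replacing the $1$-jet determinant bundle $\mathcal L$ by $\mathrm{Hom}(\det E,\det F)$ and applying Theorem \ref{ICdim4} to the preimage of the zero section. Your identification of the extra $0$-subvariety with the deeper degeneracy stratum, and your flagging of the pseudoholomorphicity of the fiberwise determinant as the one point needing verification, are consistent with the paper, which gives no further detail beyond asserting that the proof of Theorem \ref{sing4to4} carries over ``without any extra difficulty.''
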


Finally, the argument for Theorems \ref{sing4to4} and \ref{introphblowdown} could be applied to state theorems for pseudoholomorphic maps $u: X\rightarrow M$ from an almost complex $4$-manifold $X$ to almost complex $4$-orbifold $M$, which could be thought as resolution of almost complex orbifold singularities. The statement of Theorem \ref{sing4to4} does not need essential modification. For the corresponding version of Theorem \ref{introphblowdown}, the degree one condition ensures there are only finitely many orbifold points in $M$. Moreover, our Grauert criterion still holds and the preimage of these orbifold points under the map $u$ are equivalent to curve configurations of type (N) listed in \cite{LM} in the sense of topological blow-ups. In other words, we have

\begin{theorem}\label{resorb}
Let $u: (X, J) \rightarrow (M, J_M)$ be a degree one pseudoholomorphic map from a closed connected almost complex $4$-manifold $X$ to a closed connected almost complex $4$-orbifold $M$ such that $J$ is almost K\"ahler. Then there exists a subset $M_1\subset M$, consisting of finitely many points, with the following significance:
\begin{enumerate}
\item The restriction $u|_{X\setminus u^{-1}(M_1)}$ is a diffeomorphism. 
\item At each smooth point in $M_1$, the preimage is an exceptional curve of the first kind.  
\item  At each orbifold point in $M_1$, the preimage is equivalent to one of the configuration of type (N). In particular, each irreducible component is a smooth rational curve and there are no cycles enclosed by irreducible components.
\end{enumerate}
\end{theorem}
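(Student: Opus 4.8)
The plan is to follow the proof of Theorem \ref{introphblowdown} as closely as possible, isolating exactly where the orbifold points of $M$ force a modification. First I would apply the orbifold analog of Theorem \ref{sing4to4} to $u$: since $u$ is degree one it is somewhere immersed, so its singularity subset supports a $J$-holomorphic $1$-subvariety and away from finitely many points of $M$ the fibers are finite. The degree one hypothesis rules out the branching in the second bullet of Theorem \ref{sing4to4}, so the fiber over a generic point is a single point. I would then set $M_1\subset M$ to be the finite set of points whose preimage is a positive-dimensional connected $J$-holomorphic $1$-subvariety; this is the Zariski's main theorem step, and over $X\setminus u^{-1}(M_1)$ the map $u$ is a proper local diffeomorphism. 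Connectedness of $M\setminus M_1$ together with $\deg u=1$ upgrades this to a genuine diffeomorphism, giving (1).

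The observation feeding the rest of the argument is that the orbifold locus of $M$ must be contained in $M_1$: on $M\setminus M_1$ the map $u$ exhibits $M$ as the diffeomorphic image of the smooth manifold $X\setminus u^{-1}(M_1)$, so $M$ is smooth there. In particular $M$ has only finitely many orbifold points, each lying in $M_1$, and we may split $M_1=M_1^{\mathrm{sm}}\sqcup M_1^{\mathrm{orb}}$ into smooth and orbifold points. Around a smooth point $m\in M_1^{\mathrm{sm}}$ the entire local analysis of Theorem \ref{introphblowdown} applies verbatim, since it only ever uses a small ball neighborhood of $m$ inside the smooth locus; the fiber $u^{-1}(m)$ is therefore an exceptional curve of the first kind, proving (2).

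The substance of the theorem is (3). Fix $m\in M_1^{\mathrm{orb}}$ with isotropy group $\Gamma\subset U(2)$ and choose a small $J_M$-convex neighborhood $\mathcal N_m$ with $\mathcal N_m\cap M_1=\{m\}$, carrying a symplectic form compatible with $J_M$; its boundary link is the spherical space form $S^3/\Gamma$ with the contact structure induced by the $J_M$-lines. Because $\partial \mathcal N_m$ lies in the smooth locus, $u^{-1}(\partial\mathcal N_m)$ is a smooth $3$-manifold and $u$ restricts there to a contactomorphism onto $S^3/\Gamma$. I would then glue the piece $X_1^-$ lying over $\mathcal N_m$ to a standard symplectic concave cap of $S^3/\Gamma$ via \cite{McW}, producing a closed symplectic orbifold in which $C_m=u^{-1}(m)$ sits disjointly from the capping locus. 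Controlling the intersection form of this capped object — the orbifold analog of the step in Theorem \ref{introphblowdown} where one recognizes a rational $4$-manifold via \cite{McD90} and reads off $b^+=1$ — would force the matrix $Q_{C_m}$ to be negative definite, which is Grauert's criterion. Feeding negative definiteness together with the contact and topological data of $S^3/\Gamma$ into the classification of \cite{LM, Sha} then identifies $C_m$, up to topological blow-up, with a type (N) configuration: a tree of smooth rational curves with no enclosed cycles, proving (3).

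The main obstacle is the orbifold capping in (3). In the smooth case one caps $S^3$ with its unique concave filling to obtain $\mathbb CP^2$ and invokes \cite{McD90} to conclude the glued manifold is rational with $b^+=1$; for $S^3/\Gamma$ one needs the correct concave symplectic filling — for cyclic $\Gamma=\mathbb Z_p$ this is modeled on the cyclic quotient singularity $\mathbb C^2/\mathbb Z_p$ and its Hirzebruch--Jung resolution — together with an orbifold version of the rationality and $b^+=1$ control. Matching the Reeb dynamics on $S^3/\Gamma$ to the contact structure pulled back by $u$, and extracting negative definiteness of $Q_{C_m}$ from a capped symplectic orbifold rather than a smooth rational manifold, is the delicate technical core; once this is in place, the classification of \cite{LM} supplies the type (N) conclusion with no further work.
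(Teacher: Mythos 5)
Your proposal follows the paper's argument essentially verbatim: the paper likewise reduces to the orbifold versions of Theorems \ref{sing4to4} and \ref{introphblowdown}, notes that the degree one condition confines the finitely many orbifold points of $M$ to the set $M_1$, and establishes (2) and (3) via the Grauert criterion (symplectic convex neighborhoods, McCarthy--Wolfson gluing against a concave cap, negative definiteness of $Q_{C_m}$) together with the classification of type (N) configurations in \cite{LM, Sha}. The orbifold capping of $S^3/\Gamma$ that you flag as the delicate technical core is precisely the step the paper itself leaves as an assertion (``our Grauert criterion still holds''), so your sketch matches, and in places exceeds, the level of detail of the paper's own proof.
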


\section{From harmonic forms to $J$-anti-invariant forms}
The third set of results motivated from our smooth to almost complex philosophy is on the study of zero locus of $J$-anti-invariant forms \cite{BonZ}.

 For any Riemannian metric $g$ on a $4$-manifold, we have the well-known self-dual, anti-self-dual splitting of the bundle of real $2$-forms, 
\begin{equation} \label{formtypeg}
\Lambda^2=\Lambda_g^+\oplus \Lambda_g^-,
\end{equation}
since the Hodge star $*_g$ is an involution on $\Lambda^2$. Each $\Lambda_g^{\pm}$ is a rank $3$ real bundle. A section of $\Lambda_g^{\pm}$ is an (anti)-self-dual $2$-form, namely $*_g\alpha=\pm\alpha$. It is clear that a closed (anti)-self-dual $2$-form is harmonic. 

By transversality theorem, the zero set of a generic section of a  rank $3$ bundle over $4$-manifold is a $1$-manifold. This could be applied to the bundles $\Lambda_g^{\pm}$. Moreover, if we require the section to be harmonic when viewed as $2$-form, there is a well known folklore theorem around  the 1980s.

\begin{theorem}
For a generic Riemannian metric on a closed $4$-manifold, the zero set of a non-trivial self-dual harmonic $2$-form is a finite number of embedded circles.
\end{theorem}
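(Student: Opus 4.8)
The plan is to view a self-dual harmonic $2$-form as a section of the rank-$3$ bundle $\Lambda_g^+$ and to prove that, for a generic metric $g$, every nonzero such section vanishes transversally. Since the zero set of a section of a rank-$3$ bundle over a closed $4$-manifold that is transverse to the zero section is a closed $1$-submanifold, and a closed $1$-manifold is a finite disjoint union of embedded circles, the theorem reduces to a parametric transversality statement. The reason this is not immediate is exactly the rigidity emphasised throughout the paper: harmonic forms are not free sections but fill out only the finite-dimensional space $\mathcal H_g^+$ of self-dual harmonic forms, of dimension $b^+$, so one must use the metric itself as the perturbation parameter.

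First I would record the local model at a zero $p$ of a self-dual harmonic $\omega$. Because $*_g\omega=\omega$, the identity $d^*\omega=-*_g d *_g\omega=-*_g d\omega$ shows that on self-dual forms closedness and co-closedness coincide, so the only constraint on the $1$-jet at $p$ coming from harmonicity is $d\omega(p)=0$. Writing the leading term in a standard oriented frame as $\omega\approx\sum_{i=1}^{3}(\alpha_i\cdot x)\,\omega_i$, with $\omega_1,\omega_2,\omega_3$ the standard self-dual basis, one checks that $d\omega(p)=0$ imposes exactly four independent linear relations on the twelve entries of the $3\times 4$ matrix $A=(\alpha_{ij})$, so the admissible $1$-jets form an $8$-dimensional space $W$. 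The zero germ is $\bigcap_i\ker\alpha_i$, which is a smooth $1$-dimensional germ precisely when $\operatorname{rk}A=3$, and the degenerate set $\{\operatorname{rk}A\le 2\}$ has codimension $2$ inside $W$. Hence nondegenerate (circle) zeros are the generic local behaviour and there is no local obstruction.

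Next I would set up the universal zero locus. Working in suitable Sobolev or $C^\infty$ completions, consider
\[
\mathcal Z=\{(g,\omega,p): g\in\mathcal M,\ 0\ne\omega\in\mathcal H_g^+,\ \omega(p)=0\}
\]
over the space $\mathcal M$ of metrics, together with the jet section whose vanishing records a \emph{degenerate} zero. The heart of the matter is the \textbf{key lemma}: at any point of $\mathcal Z$ the linearization in the metric direction already surjects onto the relevant fibre, i.e. an arbitrary infinitesimal change of $\big(\omega(p),\nabla\omega(p)\big)$, modulo the forced relation $d\omega(p)=0$, is realized by some variation $\dot g$. Concretely, $\dot g$ changes the Hodge star $*_g$, hence both the splitting $\Lambda^2=\Lambda_g^+\oplus\Lambda_g^-$ and the Green's operator, so the self-dual harmonic form moves by a computable $\dot\omega$; localizing $\dot g$ in a small ball around $p$, one must show that $\dot\omega(p)$ and its first jet sweep out all of $\Lambda_g^+|_p$ and enough of $W$ to make the rank condition transverse. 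This surjectivity is the step I expect to be the main obstacle, because it requires controlling the \emph{nonlocal} harmonic projection well enough to see that its pointwise first-order response to a localized metric change is unconstrained beyond $d\omega(p)=0$.

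Granting the key lemma, $\mathcal Z$ and its degenerate sublocus are Banach manifolds, the projection to $\mathcal M$ is Fredholm, and Sard--Smale produces a comeager set of regular values. When $b^+=1$ the conclusion is then immediate: $\mathcal H_g^+$ is a single line, and carrying $p$ as the only extra parameter the degenerate locus has Fredholm index $4-3-2=-1<0$ over $\mathcal M$, so a generic metric admits no degenerate zeros and the unique self-dual harmonic form has circle zeros. For $b^+>1$ one carries $\omega$ projectively as an additional parameter, and the analogous count gives expected fibre dimension $(b^+-1)+4-3-2=b^+-2$; this is negative only for $b^+\le 1$, so for larger $b^+$ the naive index count does not rule out degenerate zeros for \emph{every} form and an additional argument (showing the bad locus in $\mathbb P(\mathcal H_g^+)$ is swept out over a meagre set of metrics) is needed, although the transversality above already yields that a generic form has circle zeros. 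Finally, upgrading the residual set to a genuine generic statement and passing from a transverse zero set to a finite union of circles are routine.
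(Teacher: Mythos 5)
First, a point of comparison: the paper does not actually prove this statement --- it is quoted as a folklore theorem from the 1980s, serving only as the smooth-category motivation for Theorem \ref{4ZJhol} and Taubes' program, so there is no proof in the paper to measure yours against. Your framework is the standard one behind the folklore result: treat a self-dual harmonic form as a section of the rank-$3$ bundle $\Lambda_g^+$, use the metric as the perturbation parameter (since harmonicity confines the section to the finite-dimensional space $\mathcal H_g^+$), and run Sard--Smale. Your local jet analysis is also correct: for a self-dual form $d^*\omega=-*_gd\omega$, so harmonicity imposes exactly the four conditions $d\omega(p)=0$ on the $1$-jet at a zero, the admissible jet space $W$ is $8$-dimensional, and the rank-$\le 2$ locus inside $W$ does have codimension $2$ (this can be checked by writing $d\omega(p)=*\bigl(\sum_i I_i\,df_i\bigr)$ for the standard triple of complex structures $I_i$ and counting).

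That said, there are two genuine gaps. The first is that your ``key lemma'' --- surjectivity of $\dot g\mapsto\bigl(\dot\omega(p),\nabla\dot\omega(p)\bigr)$ onto $\Lambda_g^+|_p\oplus W$ --- is the entire analytic content of the theorem, and you only name it as an obstacle rather than prove it. The missing mechanism is unique continuation: a self-dual harmonic form satisfies an elliptic system and so cannot vanish on an open set, hence near any zero $p$ one may support $\dot g$ where $\omega\ne 0$; if the jet-evaluation map failed to be surjective, a nonzero annihilating functional would, fed through the Green's operator, yield a harmonic object vanishing to infinite order at $p$, contradicting unique continuation. Without this step the proposal is a scheme, not a proof. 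The second gap is one your own index count exposes: for $b^+\ge 2$ the degenerate locus in $\{(g,[\omega],p)\}$ has nonnegative expected fibre dimension $b^+-2$ over the space of metrics, so Sard--Smale cannot make it empty for a generic $g$, and the ``additional argument'' you defer to should not be expected to exist. What your method actually establishes is the stated theorem for $b^+=1$, and for $b^+\ge 2$ only that a \emph{generic} (in particular, some) nonzero self-dual harmonic form has transverse circle zeros --- consistent with the existential phrasing the paper itself adopts two paragraphs later (``for a generic Riemannian metric there is a closed self-dual $2$-form which is near-symplectic''), but strictly weaker than the statement as literally written for every non-trivial form.
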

This is the starting point of Taubes' program, {\it e.g.} \cite{Tsd}, to generalize the identification of Seiberg-Witten invariants and Gromov invariants for symplectic $4$-manifolds to general compact oriented $4$-manifolds. The form is symplectic off these circles, and they appear as the boundary of the pseudoholomorphic curves in the correspondence.

For the corresponding almost complex version, we start with a less well known decomposition of the $2$-form bundle $\Lambda^2$. For almost complex manifold $(M, J)$, $J$ acts on $\Lambda^2$ 
as an
 involution, $\alpha(\cdot, \cdot) \rightarrow \alpha(J\cdot,
J\cdot)$. This involution induces the splitting into bundles of $J$-invariant and
$J$-anti-invariant 2-forms
\begin{equation*} \label{formtypeJ}
\Lambda^2=\Lambda_J^+\oplus \Lambda_J^-
\end{equation*}
corresponding to the eigenspaces of eigenvalues $\pm 1$ respectively. The sections of $\Lambda^{\pm}$ are called  $J$-invariant and
$J$-anti-invariant 2-forms respectively. The bundle  $\Lambda^-_J$ inherits an almost complex structure, still denoted by $J$, from $J\alpha(X, Y)=-\alpha(JX, Y)$. $\Lambda_J^-$ is a rank $2$ bundle, so the expected dimension of the zero set of a section over $M^4$ is $2$.

 When $g$ is compatible with $J$, {\it i.e.} $g(Ju, Jv)=g(u, v)$, we have $\Lambda_J^-\subset \Lambda_g^+$. Hence any closed $J$-anti-invariant $2$-form is $g$-harmonic. In fact, we can see for every fiber,  $\Lambda_J^-$ is spanned by $\Re ((dx_1+idx_2)\wedge (dx_3+idx_4)), \Im ((dx_1+idx_2)\wedge (dx_3+idx_4))$, and $\Lambda_g^+$ is spanned by these forms and an additional $dx_1\wedge dx_2+dx_3\wedge dx_4$.

The complex line bundle $\Lambda_J^-$ can be viewed as a natural generalization of the canonical bundle of a complex manifold in the complex setting.  On a complex surface $(M, J)$, if $\alpha$ is a closed $J$-anti-invariant $2$-form, then $J\alpha$ is also closed and $\alpha+iJ\alpha$ is a holomorphic $(2, 0)$ form. Hence the zero set $\alpha^{-1}(0)$ is a canonical divisor of $(M, J)$, {\it e.g.} by the Poincar\'e-Lelong theorem.  

Hence, in terms of the results in smooth and complex categories, our smooth to almost complex philosophy suggests that the zero set of a closed $J$-anti-invariant $2$-form is a $J$-holomorphic curve when $J$ is almost complex. This speculation is a question raised in the proceeding of fifth International Congress of Chinese Mathematicians \cite{DLZiccm}, which is answered affirmatively in \cite{BonZ}.

\begin{theorem}\label{4ZJhol}
Suppose $(M, J)$ is a compact connected almost complex $4$-manifold and $\alpha$ is a non-trivial closed $J$-anti-invariant $2$-form. Then the zero set $Z$ of $\alpha$ supports a $J$-holomorphic $1$-subvariety $\Theta_{\alpha}$ in the canonical class $K_J$.
\end{theorem}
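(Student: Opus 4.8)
The plan is to realize $Z$ as the zero set of a section of a complex line bundle and then verify the two hypotheses of Taubes' Theorem~\ref{pcaholo}. Recall that $\Lambda_J^-$ is itself a complex line bundle, canonically isomorphic to the canonical bundle $K_J=\Lambda^{2,0}_J T^*M$ via $\alpha\mapsto \beta:=\alpha^{2,0}$, the $(2,0)$-component of the complexification of $\alpha$; under this identification $Z=\alpha^{-1}(0)=\beta^{-1}(0)$. Once we know that $Z$ is a closed set of finite $2$-dimensional Hausdorff measure carrying a positive cohomology assignment in the sense of Definition~\ref{PCA}, Theorem~\ref{pcaholo} produces a compact $J$-holomorphic $1$-subvariety $\Theta_\alpha$ supported on $Z$. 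The class of $\Theta_\alpha$ is then forced to be the Euler class of $\Lambda_J^-$: the positive cohomology assignment records precisely the local degrees of the section $\beta$, which are exactly the multiplicities attached to $\Theta_\alpha$, so $e_{\Theta_\alpha}=c_1(\Lambda_J^-)=K_J$.

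First I would extract the first-order equation satisfied by $\beta$. On a complex surface a $(2,0)$-form has no $(3,0)$-part under $d$, so $d\beta$ splits only into its $(2,1)$- and $(1,2)$-components, the latter being the algebraic (zeroth-order) contraction of $\beta$ with the Nijenhuis tensor. Writing $\alpha=\tfrac12(\beta+\bar\beta)$ and decomposing $d\alpha=0$ by bidegree then gives a Cauchy--Riemann type system $\bar\partial\beta = A(\beta)$, where $A$ is tensorial and real-linear in $\beta$, built from the Nijenhuis tensor. Choosing a metric $g$ compatible with $J$ we have $\Lambda_J^-\subset \Lambda_g^+$, so the closed form $\alpha$ is self-dual, hence $g$-harmonic; Aronszajn's unique continuation shows that the non-trivial $\alpha$ cannot vanish to infinite order, so $Z$ is a proper closed subset with empty interior.

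The heart of the matter is the behavior of $\beta$ along $J$-holomorphic disks, which delivers both hypotheses at once. Restricting $\bar\partial\beta=A(\beta)$ to a $J$-holomorphic disk $\sigma\colon D\to M$ yields a genuine CR equation for the pulled-back section $\sigma^*\beta$ of $\sigma^*\Lambda_J^-$ over $D$, whose inhomogeneity is again zeroth order since $A$ is algebraic; the point is that the tangential restriction of the $(2,1)$-form $\bar\partial\beta$ reproduces the intrinsic $\bar\partial$-operator of the restricted section without surviving transverse-derivative terms. The Carleman similarity principle then expresses $\sigma^*\beta=e^{g}h$ with $g$ continuous and $h$ holomorphic, so the zeros of $\sigma^*\beta$ are isolated and carry strictly positive local degree. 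Applying this to the leaves of a local smooth foliation of $M$ by $J$-holomorphic disks (as in the proof of Theorem~\ref{ICdim4}) shows $Z$ meets each leaf in finitely many points, and the coarea formula bounds $\mathcal{H}^2(Z)$ locally, with compactness of $M$ giving finiteness of the total measure. Trivializing $\sigma^*\Lambda_J^-$ over $D$ turns $\sigma^*\beta$ into a $\mathbb{C}$-valued function whose zero-count $I(\sigma)$ is the multiplicity of the toy model at the beginning of this section; properties (1)--(4) of Definition~\ref{PCA} are the corresponding properties of that multiplicity, while property (5) is exactly the strict positivity of local degree just established.

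With both hypotheses verified, Theorem~\ref{pcaholo} yields the $J$-holomorphic $1$-subvariety $\Theta_\alpha$ supported on $Z$, and its class is identified with $K_J$ as above. I expect the genuine obstacle to be the positivity step, property (5): one must check that restricting the ambient closedness equation to a $J$-holomorphic disk produces a CR equation with purely zeroth-order inhomogeneity, with no transverse derivatives of $\beta$ leaking in to spoil the sign, and then run the similarity principle in the line-bundle setting. By contrast, the finiteness of $\mathcal{H}^2(Z)$ should be comparatively routine once the foliation-by-disks scheme of Theorem~\ref{ICdim4} is in place, since it follows the same coarea argument.
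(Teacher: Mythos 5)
Your proposal is correct and follows essentially the same route as the paper: finiteness of the $2$-dimensional Hausdorff measure of $Z$ via a local foliation by $J$-holomorphic disks together with leafwise positivity of the zeros, a positive cohomology assignment given by the signed zero count of $\alpha$ viewed as a section of $\Lambda_J^-$ over each admissible disk (equivalently, the intersection number of its graph with the zero section), and then Taubes' Theorem~\ref{pcaholo} with the class identified as $K_J$. The one point where you differ is a single lemma: you derive the Cauchy--Riemann system $\bar\partial\beta=A(\beta)$ from $d\alpha=0$ and run the Carleman similarity principle along each leaf, whereas the paper (Lemma~2.3 of \cite{BonZ}) chooses a good local frame in which the restricted section becomes genuinely holomorphic along each disk --- two essentially equivalent ways of producing isolated zeros of positive local degree, so the concern you flag about transverse derivatives is indeed the crux and is resolved exactly as you anticipate.
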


The general scheme to prove Theorem \ref{4ZJhol} is similar to what is used to prove Theorem \ref{ICdim4}. We first show that $Z$ has finite $2$-dimensional Hausdorff measure. The idea is to foliate neighborhoods of points in $Z$ by $J$-holomorphic disks. Applying a dimension reduction argument with the help of a unique continuation result, we are able to reduce our study to the intersection of $Z$ with $J$-holomorphic disks. Then a key difference from Theorem \ref{ICdim4} is that one need a result corresponding to Gromov's positivity of intersection for dimension $2$ and codimension $2$ almost complex submanifolds (Theorem \ref{Gromov}). Our strategy is to choose a good local frame around any point $x\in M$ such that a closed $J$-anti-invariant $2$-form, viewed as a section of $\Lambda_J^-$, could be written as a holomorphic function locally along each disk which is used to foliate a neighborhood. This is Lemma 2.3 in \cite{BonZ}.

The second step is to find a ``positive cohomology assignment" for $Z$. The strategy is to view $J$-anti-invariant $2$-forms as sections of the bundle $\Lambda_J^-$. Now a $J$-anti-invariant form $\alpha$ defines a $4$-dimensional submanifold $\Gamma_{\alpha}$ in the total space of $\Lambda_J^-$ whose intersection with $M$, as submanifolds of $\Lambda_J^-$, is the zero set of the form. Given a disk in $M$, whose boundary does not intersect $\Gamma_{\alpha}$, we can compose with a section and perturb it to obtain an admissible disk $\sigma ' : D \rightarrow \Lambda_J^-$ which intersects $M$ transversely. Then the oriented intersection number of $\sigma'$ with the zero section defines a positive cohomology assignment. 

Theorem \ref{4ZJhol} implies that for any tamed $J$ on $T^4$ or a K3 surface, the zero set of any closed $J$-anti-invariant $2$-form is empty. We would like to know wether there are examples of  closed $J$-anti-invariant $2$-forms for an almost complex structure $J$ on $T^4$ or K3 whose zero sets are non-empty. The examples constructed in \cite{DLZ2} on these manifolds all have empty zero sets. 

Theorem \ref{4ZJhol} could be extended to the sections of bundle $\Lambda_{\R}^{n, 0}$ of real parts of $(n, 0)$ forms, which has a natural complex line bundle structure induced by the almost complex structure on $M$. The space of its sections is denoted $\Omega_{\R}^{n, 0}$. We are able to show that the zero set of a non-trivial closed form in $\Omega_{\R}^{n, 0}$ supports a $J$-holomorphic $(n-1)$-subvariety up to Question \ref{intJcurve}. The key to establish this result is again a good choice of local frame for the bundle $\Lambda_{\R}^{n, 0}$. 

\subsection{Relation with Taubes' SW=Gr}
As we mentioned,  Theorem \ref{4ZJhol} is related to Tabues's SW=Gr program for compact oriented $4$-manifolds. To start, we need the notion of near-symplectic forms. A closed $2$-form $\omega$ is called near-symplectic if at each point $x$, either $(\omega\wedge\omega)(x)>0$ or  $\omega(x)=0$ and the derivative $(\nabla \omega)(x): T_xX\rightarrow \Lambda^2T_x^*X$ has rank $3$. It follows from the definition that the zero set $Z$ of a near-symplectic form is a disjoint union of embedded circles.
 If $\omega$ is near symplectic, then there is a Riemannian metric $g$ such that $\omega$ is a self-dual harmonic form with respect to $\omega$. Conversely, if $X$ is compact and $b^+>0$, then for a generic Riemannian metric there is a closed self-dual $2$-form which is near-symplectic. This symplectic form and the metric define a compatible almost complex structure $J$ on $X\setminus Z$. 
 
 Taubes \cite{Tsd} proves that if $X$ has a non-zero Seiberg-Witten invariant then there exists a $J$-holomorphic subvariety in $X\setminus Z$ homologically bounding $Z$ in the sense that it has intersection number $1$ with every linking $2$-sphere of $Z$. This result could be stated for the completion of $X\setminus Z$ by attaching symplectization ends and $J$-holomorphic curves with certain asymptotic conditions \cite{Thwz}.  Gromov type invariants are defined in this latter setting  \cite{Ge}. As the space of $\omega$-tamed almost complex structures is contractible, it follows from SFT compactness theorem \cite{EGH} that the above existence of pseudoholomorphic subvariety in $X\setminus Z$ holds for an arbitrary almost complex structure on $X\setminus Z$ tamed by $\omega$.

It is interesting to comparing the pseudoholomorphic subvarieties in the near-symplectic setting and the one from Theorem \ref{4ZJhol}. We choose a smooth family of closed $2$-forms $\alpha_t$, $ t\in [0,1]$, such that $\alpha_0$ is a $J$-anti-invariant form and any other $\alpha_t$ is near symplectic. If we choose a smooth family of almost complex structures $J_t$ defined on $X\setminus Z_t$ when $t\ne 0$ and on $X$ when $t=0$, such that $J_0=J$ and $J_t$ is tamed by $\alpha_t|_{X\setminus Z_t}$. Suppose the Seiberg-Witten invariant of the canonical class is non-trivial. 
By Taubes' result, there are $J_t$-holomorphic subvarieties bounding $Z_t$ when $t\ne 0$. Under this assumption, we should be able to choose a family of such pseudoholomorphic subvarieties $C_t$ such that $Z_0$ is the limit of them in the Gromov-Hausdorff sense. 

This limiting process is related to the local modification of the boundary circles as in the Luttinger-Simpson theorem. 
For example, there are an even number of untwisted zero circles in $Z_t$ since $X$ admits an almost complex structure by the observation of Gompf that the number of these circles has the same parity as $b_1+b^++1$. 

The types of the zero circles \cite{Ge} are recalled in the following. For any oriented circle in $Z_t$, we denote $z$ to be its unit-length tangent vector. Its normal bundle $N$ is split as $L^+\oplus L^-$ where the quadratic form $N\rightarrow \underline{\R}$, $v\mapsto \langle \iota(z)\nabla_v\omega, v\rangle$ is positive and negative definite respectively. A component of $Z_t$ is called untwisted (resp. twisted) if the line bundle $L^-$ is trivial (resp. non-trivial). 

It would be very interesting to see how the boundary circles ({\it i.e.} the zero circles $Z_t$) and the pseudoholomorphic subvarieties $C_t$ change as $t$ varies. 
In general, the number of boundary circles will change and  thus the topological type of $C_t$ will  change. For instance, it is possible that two untwisted simple boundary circles come together and die, and at the same time the genus of $C_t$ increases by one. As $t$ goes to zero, the number of boundary circles would decrease to zero and we would finally get a closed pseudoholomorphic subvariety $C_0$. It is also possible that Lagrangian Whitney disks \cite{Ticm} play a role in the deformation.

On the other hand, we speculate that the existence of a non-trivial closed $J$-anti-invariant $2$-form implies the existence of an unbounded sequence $\{r_n\}\subset [1, \infty)$ such that the  perturbed Seiberg-Witten equations (2.9) in \cite{Tsd} has solutions, with the spin$^c$ structure whose positive spinor bundle $S^+=T_J^{2,0}M\oplus \underline{\mathbb C}$ and any self-dual harmonic $2$-form $\omega$. Notice we cannot expect the corresponding Seiberg-Witten invariant to be non-trivial. In fact, any non-K\"ahler proper elliptic surface without singular and multiple fibers has vanishing Seiberg-Witten invariant (see Example 1 of \cite{Biq}) but they have $p_g>0$.

\section{Non-compact manifolds}
Although our intersection theory  results are stated only for compact $M_1$ and $Z_2$ so far, the discussion could be extended to non-compact setting. First of all, we should define pseudoholomorphic subvarieties in a general, possibly non-compact, almost complex manifold. 

 An (open) irreducible $J$-holomorphic $n$-subvariety in $M$ is the image of a somewhere immersed  pseudoholomorphic map $\phi: X\rightarrow M$ from a connected smooth almost complex $2n$-manifold $X$. A subset $V\subset M$ is called a $J$-holomorphic $n$-subvariety if it is a countable collection of pairs $\{(V_i, m_i)\}$ where each $V_i$ is an irreducible $J$-holomorphic $n$-subvariety, which is the image of $\phi_i: X_i\rightarrow M$, such that $\cup_iV_i=V$, and moreover there is a set $\Lambda_0$ with countably many connected components $\Lambda^j$ such that any point $x\in\Lambda^j\subset \Lambda_0$ is not in $\overline{\Lambda_0\setminus\Lambda^j}$ and  $\phi_i$ is an embedding outside $\Lambda_0$. 
 
When $M$ is an oriented manifolds with symplectic form $\omega$ and $J$ is tamed by $\omega$, a $J$-holomorphic $k$-subvariety is called  finite energy if furthermore we have  the sum of the integrals $\phi_i^*(\omega^k)$ over $X_i$ is finite.

When $k=1$, the above definition is essentially what can be found in \cite{Tsd} if in addition the subset $V$ is closed. 
Also mentioned there, the crucial Theorem \ref{pcaholo} above could be also stated in this setting.

\begin{theorem}\label{pcaholoopen}
Let $(X, J)$ be a $4$-dimensional almost complex manifold. 
Suppose that $C\subset X$ is a closed set with the following properties:
\begin{itemize}
\item The restriction of $C$ to any open $X'\subset X$ with compact closure has finite $2$-dimensional Hausdorff measure. 
\item $C$ has a positive cohomology assignment. 
\end{itemize}

Then $C$ supports a $J$-holomorphic $1$-subvariety. 
\end{theorem}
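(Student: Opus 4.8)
The plan is to exploit that the conclusion of Theorem~\ref{pcaholoopen} is \emph{local} in nature and that the proof of Taubes' Theorem~\ref{pcaholo} proceeds by a purely local analysis near each point of $C$: the global finiteness hypothesis in Theorem~\ref{pcaholo} is used only to assemble the local pieces into finitely many \emph{compact} irreducible curves, whereas the structural statement that $C$ is, near every point, a finite union of somewhere immersed pseudoholomorphic disks carrying positive integer multiplicities requires only the finiteness of the $2$-dimensional Hausdorff measure $\mathcal{H}^2(C)$ on a neighborhood. Thus I would first establish this local structure under the weaker local finiteness hypothesis, and then glue the local data into a (possibly non-compact) $J$-holomorphic $1$-subvariety in the sense of the definition given for the non-compact setting.

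\emph{Localization.} For each $p\in X$ I would choose a coordinate ball $B_p$ with $\overline{B_p}$ compact. Since $C$ is closed in $X$, the set $C\cap B_p$ is relatively closed in $B_p$, and by hypothesis it has finite $2$-dimensional Hausdorff measure. I would then check that the positive cohomology assignment $I$ of Definition~\ref{PCA} restricts to a positive cohomology assignment on $B_p$: to an admissible disk $\sigma\colon D\to B_p$ one assigns the integer $I(\sigma)$ computed in $X$, and the five axioms descend immediately, since any admissible disk, admissible homotopy, or proper degree $k$ cover with image in $B_p$ is \emph{a fortiori} admissible in $X$. Taubes' local structure argument then applies on $B_p$ and shows that $C\cap B_p$, weighted by the multiplicities read off from $I$ on small transverse disks, is the support of a $J$-holomorphic $1$-subvariety of $B_p$: a finite union of irreducible local branches, each the image of a somewhere immersed pseudoholomorphic disk, with positive integer weights.

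\emph{Gluing.} Because the weights are intrinsic---determined by the values $I(\sigma)$ on arbitrarily small disks transverse to $C$---the local descriptions agree as multiplicity-weighted sets on each overlap $B_p\cap B_q$. The local branches therefore continue across overlaps, and grouping them by analytic continuation produces the global irreducible components $V_i$, each presented as the image of a somewhere immersed pseudoholomorphic map $\phi_i\colon X_i\to X$ from a connected smooth almost complex surface obtained as the normalization of the component. Since $X$ is second countable, there are at most countably many such components; and since the finite local Hausdorff measure bounds the number of sheets of $C$ over every small ball, only finitely many of the $V_i$ meet any compact set, so their union is locally finite and the exceptional set $\Lambda_0$ of the non-compact definition---where the $\phi_i$ fail to be embeddings---has locally finitely many components satisfying the required separation condition. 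This exhibits $C$ as the support of a $J$-holomorphic $1$-subvariety.

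The main obstacle will be the globalization step rather than the localization. Constructing the normalizations $\phi_i\colon X_i\to X$ requires organizing the analytically continued local branches into connected smooth domains and verifying that the resulting maps are genuinely somewhere immersed and pseudoholomorphic. The harder point is local finiteness: confirming that no compact set meets infinitely many components, and that $\Lambda_0$ satisfies the separation condition in the definition, hinges on a uniform monotonicity-type lower bound for the $\mathcal{H}^2$-measure of each local branch passing through a fixed ball, so that finite local measure translates into a finite number of sheets. Once this uniform control is in place, the passage from the local structure theorem to the global statement is formal, and the theorem follows.
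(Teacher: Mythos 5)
Your proposal is correct and follows essentially the same route as the paper, which gives no detailed argument for Theorem~\ref{pcaholoopen} but, following Taubes \cite{Tsd}, observes that the proof of Theorem~\ref{pcaholo} is local in nature and therefore extends once the Hausdorff-measure hypothesis is imposed only on relatively compact open sets. Your localization to coordinate balls, the continuation of local branches into global irreducible components, and the monotonicity-based local finiteness are precisely the details implicit in that remark.
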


Here a positive cohomology assignment is exactly what is in Definition \ref{PCA}, without any change. Moreover, the two parts of the proof for Theorem \ref{ICdim4} can be also applied to show the two bullets in Theorem \ref{pcaholoopen} for the set $A=u^{-1}(Z_2)$ in the non-compact setting if we can show $A$ is actually closed in a subset of $M_1$. First our statement in this setting is the following. 

\begin{theorem}\label{ICdim4open}
Suppose $(M, J)$ is an almost complex $2n$-manifold, 
and  $Z_2$ is a codimension $2$ embedded almost complex submanifold. 
Let $(M_1, J_1)$ be a  connected almost complex $4$-manifold and $u: M_1\rightarrow M$ a pseudoholomorphic map such that $u(M_1)\nsubseteq Z_2$. Then $u^{-1}(Z_2)$ supports a $J_1$-holomorphic $1$-subvariety in $M_1$.

If $J$ has a compatible symplectic form $\omega$ and $Z_2$ has finite energy with respect to $\omega$, then $u^{-1}(Z_2)$ is a finite energy $J_1$-holomorphic $1$-subvariety.
\end{theorem}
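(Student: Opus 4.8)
The plan is to mirror the compact proof of Theorem \ref{ICdim4} while being careful about where compactness was implicitly used, and to invoke the non-compact version of Taubes' regularity result, Theorem \ref{pcaholoopen}, in place of Theorem \ref{pcaholo}. The key structural point is that Theorem \ref{pcaholoopen} requires $C=u^{-1}(Z_2)$ to be \emph{closed} in $M_1$ and to have finite $2$-dimensional Hausdorff measure only on relatively compact open subsets, rather than globally. Since $Z_2$ is an embedded submanifold and $u$ is continuous, $u^{-1}(Z_2)$ is automatically closed in $M_1$, so that hypothesis is free. First I would verify the two bullets of Theorem \ref{pcaholoopen} for $A=u^{-1}(Z_2)$ by running the two-part argument behind Theorem \ref{ICdim4} \emph{locally}: both the finiteness-of-Hausdorff-measure step and the construction of the positive cohomology assignment are local in nature, as emphasized after the statement of Theorem \ref{ICdim4orb}, so they go through verbatim on a relatively compact neighborhood of any point.

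More precisely, for the first bullet I would fix a relatively compact open $M_1'\subset M_1$ and reproduce the dimension-reduction argument: the generalization of unique continuation for pseudoholomorphic maps prevents $A$ from being open, reducing us to small coordinate neighborhoods; there one foliates by $J_1$-holomorphic disks and uses Gromov's positivity (Theorem \ref{Gromov}) to bound the $2$-dimensional Hausdorff dimension, and then the coarea formula yields finite $2$-dimensional Hausdorff measure on $M_1'$. Because we only need finiteness over each relatively compact $M_1'$, no global compactness of $M_1$ is needed. For the second bullet I would assign to each admissible test disk $\sigma:D\to M_1$ the intersection number of $u\circ\sigma$ with the codimension-two submanifold $Z_2$ in $M$, exactly as in the compact case; the five axioms of Definition \ref{PCA} are checked disk-by-disk and never referred to a global structure, so they transfer unchanged. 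Applying Theorem \ref{pcaholoopen} then gives that $A$ supports a $J_1$-holomorphic $1$-subvariety, which is the first assertion.

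For the finite-energy statement, I would suppose $J$ is compatible with a symplectic form $\omega$ and that $Z_2$ has finite energy, meaning $\int_{Z_2}\omega^{\,n-1}<\infty$ in the sense of the non-compact definition. The natural approach is to pull back: since $u$ is pseudoholomorphic and $J$ is $\omega$-compatible, $u^*\omega$ tames $J_1$ on $M_1$, and one wants to bound $\int_{\Theta}(u^*\omega)$ over the resulting $1$-subvariety $\Theta$ supported on $A$ by the energy of $Z_2$, up to the intersection multiplicities recorded by the positive cohomology assignment. Concretely I expect the energy of $\Theta$ to be controlled by the $\omega$-area swept by $u$ over $Z_2$, which is finite by hypothesis; the monotonicity and positivity built into the cohomology assignment ensure each component contributes a nonnegative, finite amount. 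The main obstacle, and the step I would spend the most care on, is precisely this energy estimate in the absence of global compactness: one must guarantee that the pieces of $A$ accumulate only along the prescribed countable set $\Lambda_0$ of the subvariety definition and that the total $\omega$-energy of $\Theta$ is dominated by that of $Z_2$ rather than merely being locally finite. Establishing a clean comparison inequality relating $\int_{\Theta}u^*\omega$ to the energy of $Z_2$, uniformly over an exhaustion of $M_1$ by relatively compact sets, is where the real work lies; the rest is a localization of the already-proven compact argument.
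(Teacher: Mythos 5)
There is a genuine gap, and it sits exactly at the point the paper identifies as the one real novelty of the non-compact case. You assert that ``since $Z_2$ is an embedded submanifold and $u$ is continuous, $u^{-1}(Z_2)$ is automatically closed in $M_1$, so that hypothesis is free.'' This is false: in a non-compact ambient manifold an embedded submanifold need not be a closed subset (it need not be properly embedded), so $\overline{Z_2}\setminus Z_2$ can be nonempty, and then a sequence in $A=u^{-1}(Z_2)$ can converge to a point of $u^{-1}(\overline{Z_2}\setminus Z_2)$, which does not lie in $A$. Closedness of $C$ is a hypothesis of Theorem \ref{pcaholoopen} and, as the paper stresses, it is also what makes the homotopy-invariance axiom of a positive cohomology assignment work: without it an admissible disk could be slid ``off the end'' of $Z_2$ through the missing boundary points, destroying the count. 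So the hypothesis is not free, and your argument as written does not apply Theorem \ref{pcaholoopen} legitimately.

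The paper's fix is short but essential: choose an open set $U\subset M$ with $Z_2\subset U$ and $\overline{Z_2}\setminus Z_2\subset M\setminus U$, so that $Z_2$ is closed \emph{in} $U$; set $U_1=u^{-1}(U)$ and restrict $u$ to $U_1$. Then $A$ is closed in the almost complex $4$-manifold $U_1$, the two bullets of Theorem \ref{pcaholoopen} are verified there by exactly the localization of the compact argument you describe (which part of your proposal is correct and matches the paper), and the conclusion for $u|_{U_1}^{-1}(Z_2)=u^{-1}(Z_2)$ follows. Your treatment of the finite-energy addendum is only a sketch of intent rather than an argument, but the decisive missing step is the restriction to $U_1$; without it the first assertion is not proved.
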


As said above, the closedness of $A=u^{-1}(Z_2)$ is very important when applying Theorem \ref{pcaholoopen}. For example, it is crucial for the invariance under homotopy property of a positive cohomology assignment.
 To achieve this, we choose an open subset $U\subset M$ such that $Z_2\subset U$ and $\overline{Z_2}\setminus Z_2\subset M\setminus U$. Then we let $U_1=u^{-1}(U)$ and restrict $u$ on $U_1$. Evidently, $A$ is closed in $U_1$ and Theorem \ref{pcaholoopen} would lead to the $J_1$-holomorphicity of $u|_{U_1}^{-1}(Z_2)$ which is $u^{-1}(Z_2)$.

In dimension $4$, we also have the corresponding version of Theorem \ref{ICdim4MW}.

\begin{theorem}\label{ICdim4MWopen}
Suppose $(M^{4}, J)$ is an almost complex $4$-manifold, and  $Z_2$ is a $J$-holomorphic $1$-subvariety. Let $(M_1, J_1)$ be a connected almost complex $4$-manifold and $u: M_1\rightarrow M$ a pseudoholomorphic map such that $u(M_1)\nsubseteq Z_2$. Then $u^{-1}(Z_2)$ supports a $J_1$-holomorphic $1$-subvariety in $M_1$.
\end{theorem}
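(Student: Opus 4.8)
The plan is to deduce Theorem \ref{ICdim4MWopen} from the non-compact regularity result of Taubes, Theorem \ref{pcaholoopen}, applied to the set $A=u^{-1}(Z_2)\subset M_1$. By that theorem it suffices to produce three ingredients: that (after a harmless restriction) $A$ is closed, that the restriction of $A$ to every open subset of $M_1$ with compact closure has finite $2$-dimensional Hausdorff measure, and that $A$ carries a positive cohomology assignment in the sense of Definition \ref{PCA}. The last two are purely local statements, and for a $1$-subvariety target $Z_2$ they are established exactly as in the proof of Theorem \ref{ICdim4MW} — with Micallef--White's Theorem \ref{MicW} playing the role that Gromov's Theorem \ref{Gromov} plays in Theorem \ref{ICdim4}. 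Since nothing in those local arguments uses compactness of $M_1$, they transport without change. Thus the genuinely new point is the closedness of $A$, and I would handle it precisely as in Theorem \ref{ICdim4open}.

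For the closedness reduction, note that a $J$-holomorphic $1$-subvariety in a non-compact $M$ need not have closed support: the closure $\overline{|Z_2|}$ may acquire extra limit points. I would choose an open set $U\subset M$ with $|Z_2|\subset U$ and $\overline{|Z_2|}\setminus|Z_2|\subset M\setminus U$, so that $Z_2$ is closed in $U$, and then work on the open $4$-manifold $U_1=u^{-1}(U)\subset M_1$ with $u|_{U_1}$. Since $|Z_2|\subset U$ we have $A=u^{-1}(Z_2)\subset U_1$, so nothing is lost, and by continuity $A$ is closed in $U_1$. A $J_1$-holomorphic $1$-subvariety supported on $A$ inside the open manifold $U_1$ is, through its irreducible pieces, also a $1$-subvariety in $M_1$ (the non-compact definition imposes no closedness on the support), so it is enough to run the argument on $U_1$. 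This closedness is not cosmetic: as emphasized after Theorem \ref{ICdim4open}, it is exactly what makes the homotopy-invariance axiom (2) of Definition \ref{PCA} well posed, an admissible homotopy being required to keep $A$ inside its image.

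With $A$ closed in $U_1$, I would build the positive cohomology assignment by pushing test disks forward to $M$: for an admissible $\sigma:D\to U_1$, set $I(\sigma)$ equal to the value on $u\circ\sigma$ of the positive cohomology assignment defined by local intersection number with $Z_2$ in $M$, which exists and is positive by Micallef--White, Theorem \ref{MicW}. Axioms (1)--(4) of Definition \ref{PCA} are then formal, since $u\circ(\sigma\circ\theta)=(u\circ\sigma)\circ\theta$, since $u$ carries admissible homotopies to admissible homotopies, and since $A=u^{-1}(Z_2)$ maps into $Z_2$. For the finite-measure bullet I would, after a unique continuation argument ruling out $A$ being open, foliate a neighborhood of each point of $M_1$ by $J_1$-holomorphic disks; each leaf is carried by $u$ to a $J$-holomorphic curve in $M$, whose intersection with $Z_2$ is a discrete set of positive points by Theorem \ref{MicW} whenever that image is not a component of $Z_2$, and the coarea formula then bounds the $2$-dimensional Hausdorff measure of $A$ on relatively compact subsets.

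The step requiring the most care is the positivity axiom (5) of the cohomology assignment. It reads off directly from Theorem \ref{MicW} only when the pushed-forward disk $u\circ\sigma$ is a non-constant $J$-holomorphic curve whose image is not contained in a component of $Z_2$. The two exceptional situations — when $u$ contracts $\sigma(D)$ to a single point of $|Z_2|$, and when $u\circ\sigma$ has image inside a component of $Z_2$ — are exactly the loci where $u$ degenerates. I expect to dispose of these as in Theorem \ref{ICdim4MW}, using the fiber structure of $u$ (cf. Theorem \ref{sing4to4}): the curves that $u$ contracts into $|Z_2|$ are themselves $J_1$-holomorphic $1$-subvarieties and can be split off and adjoined to the output directly, while on the complement $u$ is generically a local diffeomorphism and the pushed-forward assignment is manifestly positive. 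Since all of this is local, the non-compactness of $M_1$ contributes no new obstruction beyond the closedness reduction already made, and Theorem \ref{pcaholoopen} then delivers the desired $J_1$-holomorphic $1$-subvariety supported on $A$.
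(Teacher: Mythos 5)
Your proposal is correct and follows essentially the same route as the paper: reduce to Taubes' non-compact regularity result (Theorem \ref{pcaholoopen}), arrange closedness of $A=u^{-1}(Z_2)$ by restricting to $U_1=u^{-1}(U)$ for an open $U$ containing $|Z_2|$ with $\overline{|Z_2|}\setminus|Z_2|\subset M\setminus U$, and then run the two local steps of Theorem \ref{ICdim4} (finite Hausdorff measure via foliation by disks, positive cohomology assignment via intersection numbers in the ambient space) with Micallef--White's Theorem \ref{MicW} supplying positivity in place of Gromov's Theorem \ref{Gromov}. Your additional care with the degenerate cases in axiom (5) is consistent with how the paper handles Theorem \ref{ICdim4MW}.
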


We remark that the definition of pseudoholomorphic subvarieties and the intersection results could also be stated for non-compact orbifolds, as in Theorem \ref{ICdim4orb} and the discussions above that. 

Similarly, we have the corresponding version of Theorem \ref{4ZJhol}.

\begin{theorem}\label{4ZJholopen}
Suppose $(M, J)$ is an almost complex $4$-manifold and $\alpha$ is a non-trivial closed $J$-anti-invariant $2$-form. Then the zero set $Z$ of $\alpha$ supports a $J$-holomorphic $1$-subvariety $\Theta_{\alpha}$ in the canonical class $K_J$. 
\end{theorem}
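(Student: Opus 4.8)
The plan is to run the two-step scheme behind Theorem \ref{4ZJhol}, but to feed its output into the non-compact regularity result Theorem \ref{pcaholoopen} in place of the compact Theorem \ref{pcaholo}. The crucial observation is that both steps in the proof of Theorem \ref{4ZJhol} are entirely local, so they survive the removal of the compactness hypothesis once we verify the three inputs demanded by Theorem \ref{pcaholoopen}: that $Z$ is closed, that $Z$ has finite $2$-dimensional Hausdorff measure on relatively compact opens, and that $Z$ carries a positive cohomology assignment. Closedness is immediate, since $Z=\alpha^{-1}(0)$ is the zero set of a smooth form; in particular, unlike the intersection-theoretic setting of Theorem \ref{ICdim4open}, no auxiliary open set $U$ separating $Z$ from its would-be accumulation locus is needed.

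For the local finiteness of Hausdorff measure, I would repeat the dimension reduction argument of \cite{BonZ} verbatim on each relatively compact open subset $X'\subset M$. Around each point of $\overline{X'}$ one chooses the good local frame of Lemma 2.3 of \cite{BonZ}, so that $\alpha$, viewed as a section of the complex line bundle $\Lambda_J^-$, restricts to a holomorphic function along each leaf of a smooth foliation by $J$-holomorphic disks. Gromov's positivity (Theorem \ref{Gromov}) then bounds the intersection of $Z$ with each leaf, and the coarea formula converts this into a uniform bound on $\mathcal H^2(Z\cap X')$, while a generalized unique continuation prevents $Z$ from containing an open set. Covering $\overline{X'}$ by finitely many foliation charts yields the finiteness; no global input is used.

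The positive cohomology assignment is constructed exactly as in the compact case. Regarding $\alpha$ as a section of $\Lambda_J^-$ defines a $4$-dimensional submanifold $\Gamma_\alpha$ in the total space whose intersection with the zero section $M$ is $Z$. To an admissible test disk one composes with the section, perturbs to a disk transverse to $M$, and assigns the oriented intersection number with the zero section. This recipe never refers to the global topology of $M$, so all five axioms of Definition \ref{PCA} hold; the homotopy-invariance axiom uses only that $Z$ is closed in the region under consideration, which we have already secured. With the three inputs in hand, Theorem \ref{pcaholoopen} concludes that $Z$ supports a $J$-holomorphic $1$-subvariety $\Theta_\alpha$, and the local Hausdorff bound guarantees that its irreducible components are locally finite, so that $\Theta_\alpha$ fits the countable-collection definition of a subvariety in the non-compact setting.

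The remaining and, I expect, most delicate point is the homology statement, since the assertion that $\Theta_\alpha$ lies in the canonical class $K_J$ must now be read in locally finite (Borel--Moore) homology. Here I would argue locally by a Poincar\'e--Lelong computation: because the positive cohomology assignment was defined as an intersection number against the zero section of $\Lambda_J^-$, the multiplicity of each component of $\Theta_\alpha$ matches the vanishing order of $\alpha$, and the resulting locally finite cycle is Poincar\'e dual to $c_1(\Lambda_J^-)=K_J$. The hard part is organizing this class identification so that it is invariant under the choices made in the foliation and perturbation steps and genuinely independent of any exhaustion of $M$ by compact pieces; the local finiteness of components established above is what makes the Borel--Moore cycle well defined.
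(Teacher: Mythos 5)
Your proposal follows essentially the same route as the paper: the paper itself obtains Theorem \ref{4ZJholopen} by rerunning the two local steps of Theorem \ref{4ZJhol} (finite Hausdorff measure via the good local frame of Lemma 2.3 of \cite{BonZ} and foliation by $J$-holomorphic disks, plus the positive cohomology assignment from the graph $\Gamma_\alpha$ in $\Lambda_J^-$) and feeding them into Theorem \ref{pcaholoopen}, with the same observation that $Z=\alpha^{-1}(0)$ is automatically closed so no auxiliary open set is needed. The only slight imprecision is attributing the leafwise positivity to Theorem \ref{Gromov} itself rather than to its analogue supplied by the holomorphicity of $\alpha$ along each leaf in the good frame, but the substance of your argument is the paper's.
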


We remark that for Hind-Tomassini's almost complex structure on $\mathbb C^2$ with infinite dimension of closed $J$-anti-invariant $2$-forms \cite{HT}, the zero sets could be explicitly determined and they are either empty or $J$-holomorphic curves diffeomorphic to $\C$. 

We can also formulate similar higher dimensional analogue for Theorems \ref{ICdim4open}-\ref{4ZJholopen}, up to the non-compact version of Question \ref{pcaholoco2}.

By applying Theorem \ref{ICdim4open} instead of Theorem \ref{ICdim4}, we obtain the non-compact version of Theorem \ref{sing4to4}.

\begin{theorem}\label{sing4to4open}
Let $u: (X, J)\rightarrow (M, J_M)$ be a somewhere immersed pseudoholomorphic map between almost complex $4$-manifolds. Then 
\begin{itemize}
\item the singularity subset of $u$ supports a $J$-holomorphic $1$-subvariety; 
\item other than countably many points $x\in M$, where $u^{-1}(x)$ is the union of a $J$-holomorphic $1$-subvariety and a $0$-subvariety, the preimage of each point is a set of countably many isolated points.
\end{itemize}
\end{theorem}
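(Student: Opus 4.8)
The plan is to derive Theorem \ref{sing4to4open} as the non-compact analogue of the compact case (Theorem \ref{sing4to4}), running the same argument but replacing the compact intersection result Theorem \ref{ICdim4} with its non-compact counterpart Theorem \ref{ICdim4open} wherever it was invoked. First I would set up the jet-bundle formalism exactly as in the compact proof: form the complex vector bundle $\mathcal E$ over $X\times M$ whose fiber at $(x,m)$ consists of the complex-linear maps $T_xX\to T_mM$, pass to its determinant line bundle $\mathcal L$ with its canonical almost complex structure $\mathcal J$, and lift $u$ to the pseudoholomorphic map $u_{\mathcal L}(x)=(x,u(x),\det(du_x)_{\mathbb C})$ into $(\mathcal L,\mathcal J)$. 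The singularity subset is then $\mathcal S_u=u_{\mathcal L}^{-1}(X\times M\times\{0\})$, and since $u$ is somewhere immersed we have $u_{\mathcal L}(X)\nsubseteq X\times M\times\{0\}$. Applying Theorem \ref{ICdim4open} to the codimension-two embedded almost complex submanifold $X\times M\times\{0\}$ inside $(\mathcal L,\mathcal J)$ gives the first bullet: $\mathcal S_u$ supports a $J$-holomorphic $1$-subvariety. The only point requiring care relative to the compact case is the closedness hypothesis that Theorem \ref{ICdim4open} needs; here it is automatic because the submanifold $X\times M\times\{0\}$ is closed in the total space $\mathcal L$, so no auxiliary open-set truncation is required.

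For the second bullet I would follow the stratification argument of the compact proof, but I expect the finiteness assertions to soften to countability, which is why the statement reads ``countably many points'' rather than ``finitely many.'' The idea is to stratify $X$ according to the rank of $du$ and to the incidence of fibers of $u$ with the $1$-subvariety supported by $\mathcal S_u$. Away from the image of the singular subvariety, $u$ is a local diffeomorphism, so fibers are discrete; the local structure theory for pseudoholomorphic maps (as in the first bullet applied fiberwise) then shows that the exceptional points of $M$, where $u^{-1}(x)$ acquires a positive-dimensional component, form a discrete set. In the non-compact setting this set need no longer be finite, but it remains locally finite and hence at most countable, and similarly each generic fiber, while discrete, may now contain countably many points. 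The positive-dimensional part of each exceptional fiber is a $J$-holomorphic $1$-subvariety together with a residual $0$-subvariety, exactly as recorded in the statement.

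The main obstacle will be controlling the global behavior of the exceptional locus without compactness. In the compact case one obtains finiteness essentially for free from the fact that a compact $J$-holomorphic $1$-subvariety has finitely many irreducible components and that the stratification of a compact manifold produces finitely many strata of positive-dimensional fibers; removing compactness means I must argue only local finiteness at each point of $M$ and then invoke second countability to conclude the global count is at most countable. Concretely, the hard part is to show that the set of points $x\in M$ with positive-dimensional $u^{-1}(x)$ cannot accumulate within any relatively compact open subset $M'\subset M$: this should follow from applying the compact-type finiteness to the restriction $u|_{u^{-1}(M')}$ together with the fact (from Theorem \ref{ICdim4open}, via the finite-$2$-dimensional-Hausdorff-measure-on-relatively-compact-sets property inherited from Theorem \ref{pcaholoopen}) that $\mathcal S_u$ meets $u^{-1}(M')$ in a set with only finitely many irreducible components. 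Granting this local finiteness, covering $M$ by countably many relatively compact opens yields the claimed countability, completing the proof.
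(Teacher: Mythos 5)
Your proposal matches the paper's intended argument exactly: the paper derives this theorem by the one-line remark that one applies Theorem \ref{ICdim4open} in place of Theorem \ref{ICdim4} in the jet-bundle/determinant-line-bundle argument for Theorem \ref{sing4to4}, which is precisely what you do, including the correct observation that the zero section $X\times M\times\{0\}$ is already closed in $\mathcal L$ so no truncation is needed. Your local-finiteness-plus-second-countability treatment of the second bullet is a reasonable fleshing-out of the paper's (unstated) adaptation of the stratification argument to the non-compact setting.
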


It is also natural to ask for a version of Theorem \ref{introphblowdown} and \ref{resorb} in the non-compact setting.

\begin{prop}\label{openphblowdown}
Let $u: (X, J) \rightarrow (M, J_M)$ be a somewhere immersed proper surjective pseudoholomorphic map from a connected almost complex $4$-manifold $X$ to a connected almost complex $4$-orbifold $M$ such that $J$ is almost K\"ahler and $u^{-1}(p)$ contains only one point for any $p$ in an open subset $D\subset M$. Then there exists a subset $M_1\subset M$, consisting of countably many points,  with the following significance:
\begin{enumerate}
\item The restriction $u|_{X\setminus u^{-1}(M_1)}$ is a diffeomorphism. 
\item At each smooth point of $M_1$, the preimage is an exceptional curve of the first kind.  
\item  At each orbifold point of $M_1$, the preimage is equivalent to one of the configuration of type (N). In particular, each irreducible component is a smooth rational curve and there are no cycles enclosed by irreducible components.
\end{enumerate}

\end{prop}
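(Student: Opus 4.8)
The plan is to reduce Proposition \ref{openphblowdown} to the compact case, Theorem \ref{resorb}, by exhibiting the map $u$ locally around each point of the exceptional locus as a degree one almost K\"ahler resolution over a small neighborhood and then patching the local conclusions together. First I would apply the non-compact structural result, Theorem \ref{sing4to4open}, to the somewhere immersed map $u$: this produces the singularity subset as a $J$-holomorphic $1$-subvariety and identifies the countable set of points $x\in M$ whose preimage is more than finitely many points. Since $u$ is proper and surjective and is injective over the open set $D$, I would argue that the degree of $u$ is one, so that the second bullet of Theorem \ref{sing4to4open} degenerates: there are no honest branch points, and the ``bad'' preimages are exactly connected $J$-holomorphic $1$-subvarieties contracted to points. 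This candidate set is the desired $M_1$, and properness guarantees $M_1$ is discrete, hence countable, and that $u^{-1}(m)$ is compact for each $m\in M_1$.

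Next I would localize. For each $m\in M_1$ (smooth or orbifold point) I would choose a small neighborhood $\mathcal N_m$ with $J_M$-convex boundary, meeting $M_1$ only in $m$, and contained in a region carrying a symplectic form compatible with $J_M$, exactly as in the proof of Theorem \ref{introphblowdown}. By properness, $u^{-1}(\overline{\mathcal N_m})$ is compact, and $u$ restricted to $u^{-1}(\mathcal N_m)\to\mathcal N_m$ is a proper degree one almost K\"ahler map with one point in $M_1$; the boundary $u^{-1}(\partial\mathcal N_m)$ is diffeomorphic to $S^3$ and the $J$-lines give a contact structure contactomorphic to the one on $\partial\mathcal N_m$. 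At this stage the local Grauert criterion argument goes through verbatim: I cap $\mathcal N_m$ to a symplectic $\mathbb{CP}^2$, apply McCarthy-Wolfson gluing \cite{McW} to build the closed symplectic manifold $\tilde X$, use $b^+(\tilde X)=1$ and the disjointness of the exceptional fiber $C_m$ from the capping sphere $S$ to split the intersection form as $Q_{C_m}\oplus(1)$, and conclude $Q_{C_m}$ is negative definite. The classification in \cite{LM, Sha} then identifies $C_m$ as an exceptional curve of the first kind at smooth points, and as a type (N) configuration (rational components, no enclosed cycles) at orbifold points, yielding conclusions (2) and (3).

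Finally, conclusion (1) follows by assembling the local pieces: over $M\setminus M_1$ the map has no contracted curves and, being degree one and proper with no branching, is a proper local diffeomorphism onto a connected base, hence a diffeomorphism $u|_{X\setminus u^{-1}(M_1)}$. The main obstacle I expect is purely the noncompactness bookkeeping rather than any new geometric input: one must verify that the discrete set $M_1$ does not accumulate in a way that obstructs choosing mutually disjoint convex neighborhoods $\mathcal N_m$, and that properness indeed forces each $u^{-1}(m)$ to be a \emph{compact} connected $1$-subvariety so that the symplectic capping and gluing construction applies unchanged. Properness delivers exactly this compactness and the discreteness of $M_1$ in $M$, so the local-to-global patching is clean; the delicate point is confirming that the almost K\"ahler hypothesis and the convexity of $\partial\mathcal N_m$ survive the restriction to $u^{-1}(\mathcal N_m)$, which they do because all the relevant structures are local near $m$ and its compact preimage.
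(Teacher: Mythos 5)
Your proposal is correct and follows essentially the same route as the paper's proof: invoke Theorem \ref{sing4to4open} for the structure of the singularity subset, use injectivity over $D$ plus connectedness to get conclusion (1) and identify the countable set $M_1$, then use properness to obtain compactness of each $u^{-1}(m)$ and non-accumulation of $M_1$ so that the symplectic capping/McCarthy--Wolfson/Grauert-criterion argument from Theorem \ref{introphblowdown} applies verbatim near each point of $M_1$. The only cosmetic difference is that the paper handles points of $u(\mathcal S_u)\setminus M_1$ by citing the argument of Proposition 5.9 of \cite{ZCJM} and derives non-accumulation from the finite topology of $u^{-1}(\bar U)$, details you flag but do not spell out.
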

\begin{proof}
The singularity subset $\mathcal S_u$ is a $J$-holomorphic $1$-subvariety by Theorem \ref{sing4to4open} and its image $u(\mathcal S_u)$ is a union of $1$- and $0$-subvarieties. Hence $D\setminus \mathcal S_u\ne \emptyset$ and $u|_{D\setminus\mathcal S_u}$ is a diffeomorphism on it. Hence by connectedness of $M$, any point on $M\setminus \mathcal S_u$ only has a unique preimage and $u|_{u^{-1}(M\setminus \mathcal S_u)}$ is a diffeomorphism. The image $u(\mathcal S_u)\subset M$ contains two parts. The first part, $M_1$, consists of points whose preimage contain non-trivial $J$-holomorphic $1$-subvarieties. There are countably many such points. For any point $m\in u(\mathcal S_u)\setminus M_1$, we know by the argument of Proposition 5.9 in \cite{ZCJM}, $u^{-1}(m)$ is a single point. This completes the proof of the first statement of the theorem.

Since $u$ is proper, the preimage $u^{-1}(p)$ for any $p\in M_1$ is compact and thus a pseudoholomorphic subvariety in the original sense. Take an open ball $U$ round $p$ and since its closure is compact, $u^{-1}(\bar U)$ is compact and thus has finite topology. This implies $p$ is not an accumulation point of $M_1$. Hence we can take $U$ such that $U\cap M_1=\{p\}$ and our Grauert theorem argument for Theorem \ref{introphblowdown}(2) applies to show the second and third statements. 
\end{proof}

When $u$ is not proper, we have an almost complex $4$-manifold $X'\supset X$ and $u$ extends to a pseudoholomorphic map $u'$ from $X'$ such that $X'\setminus X$ has Hausdorff dimension at most two and $u'(X'\setminus X)\subset M_1$. It should be true that the preimage of each point of $M_1$ is an exceptional curve of the first kind in $X'$, or, in particular, each connected component of the exceptional locus is a subset of an exceptional curve of the first kind. However, our Grauert theorem argument does not directly apply since a point in $M_1$ might be accumulating point of $M_1$ and the rational surface obtained from an open ball around it could have infinite topology. A corresponding orbifold version could also be formulated. In this case, we should expect each connected component of the exceptional locus is a subset of topological blowup of curve configurations of type (N).

\section{Birational invariants for almost complex manifolds}\label{aKod}
Theorem \ref{sing4to4} suggests degree one pseudoholomorphic maps are birational morphisms in the almost complex category. We define two almost complex manifolds $M$ and $N$ to be birational to each other if there are almost complex manifolds $M_1, \cdots, M_{n+1}$ and $X_1, \cdots, X_{n}$ such that $M_1=M$ and $M_{n+1}=N$, and there are degree one pseudoholomorphic maps $f_i: X_i\rightarrow M_i$ and $g_i: X_{i}\rightarrow M_{i+1}$, $i=1, \cdots, n$.

 An important step to develop birational geometry for almost complex manifolds is to introduce and study birational invariants. The Kodaira dimension, plurigenera as well as dimensions of holomorphic $p$-forms for compact complex manifolds are very important classical birational invariants.  In this section, we will find and study their counterparts for (compact) almost complex manifolds. Most of the results are derived in \cite{CZ, CZ2} with Haojie Chen.
 
 Let $(X,J)$ be a $2n$-dimensional almost complex manifold. We have $$T^*X\otimes \mathbb C=(T^*X)^{1, 0}\oplus (T^*X)^{0,1}$$ where $(T^*X)^{1,0}$ annihilates the subspace in $TX\otimes \mathbb C$ where $J$ acts as $-i$. Write $\Lambda^{p, q}X=\Lambda^p((T^*X)^{1, 0})\otimes \Lambda^q((T^*X)^{0, 1})$. We have the canonical bundle $\mathcal K=\Lambda^{n,0}$, which is no longer holomorphic for non-integrable $J$. 
 
 To generalize holomorphic bundles and its holomorphic section, we need to introduce the notion of {\it pseudoholomorphic structure}. 
 A pseudoholomorphic structure on a complex vector bundle $E$ is given by a differential operator $\bar{\partial}_E: \Gamma(X, E)\rightarrow  \Gamma(X, (T^*X)^{0,1}\otimes E)$ which satisfies the Leibniz rule $$\bar{\partial}_E (fs)=\bar{\partial}f\otimes s+f\bar{\partial}_Es$$ where $f$ is a smooth function and $s$ is a section of $E$. If the pseudoholomorphic structure $\bar{\partial}_E$ satisfying $\bar{\partial}_E^2=0$ on a complex manifold, it is equivalent to a holomorphic structure on the complex bundle $E$ by Koszul-Malgrange theorem. In particular, any pseudoholomorphic structure on a complex vector bundle over a Riemann surface $S$ is holomorphic.
 
 Apply $\bar{\partial}=\pi^{p,1}\circ d$ to $\Lambda^{p, 0}$ and in particular $\mathcal K=\Lambda^{n,0}$, we have standard pseudoholomorphic structures $$\bar{\partial}: \Lambda^{p,0}\rightarrow \Lambda^{p,1}\cong (T^*X)^{0,1} \otimes \Lambda^{p,0},$$
 $$\bar{\partial}: \mathcal K\rightarrow \Lambda^{n,1}\cong (T^*X)^{0,1} \otimes \mathcal K.$$
 It can be extended  to a pseuholomorphic structure $\bar{\partial}_m: \mathcal K^{\otimes m}\rightarrow T^{0,1}\otimes \mathcal K^{\otimes m}$ on pluricanonical bundle for $ m\geq 2$ inductively by the product rule. 
 
 We are able to generalize plurigenera and the two (equivalent) definitions of complex Kodaira dimension to almost complex manifolds.

 Denote $\Omega^p(E)=\Lambda^{p,0}\otimes E$. The pseudoholomorphic structures on $\Lambda^{p,0}$ and $E$ gives a pseudoholomorphic structure on $\Omega^p(E)$. Define $$H^0(X, \Omega^p(E))=\{s\in \Gamma(X,\Omega^p(E))=\Omega^{p,0}(X, E): \bar{\partial}_{E} s=0\}.$$ The finiteness of the dimension of $\bar\partial$-harmonic forms are essentially pointed out in \cite{Hir}. Building on Hodge theorem for almost Hermitian manifolds, we show the following in \cite{CZ}. 

\begin{prop}\label{finsecE}
Let $E$ be a complex vector bundle with a pseudoholomrphic structure over a compact almost complex manifold $X$, then $H^0(X, \Omega^p(E))$ is finite dimensional for $0\leq p\leq n$. In particular, $H^0(X,\mathcal K^{\otimes m})$ is finite dimensional.
\end{prop}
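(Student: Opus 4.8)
The plan is to realize $H^0(X,\Omega^p(E))$ as the kernel of a single self-adjoint second-order elliptic operator and then invoke standard elliptic theory on the compact manifold $X$. First I would fix a Hermitian metric on $X$ compatible with $J$ together with a Hermitian metric on $E$; these induce $L^2$ inner products on all the bundles $\Lambda^{p,q}\otimes E$ and hence a formal adjoint $\bar\partial_E^*$ of $\bar\partial_E:\Gamma(\Omega^p(E))\to\Gamma(\Lambda^{p,1}\otimes E)$. Because $X$ is compact without boundary, integration by parts gives $\langle \bar\partial_E^*\bar\partial_E s, s\rangle = \|\bar\partial_E s\|^2_{L^2}$ for every smooth section $s$, so $\bar\partial_E s = 0$ if and only if $\bar\partial_E^*\bar\partial_E s = 0$. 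Thus $H^0(X,\Omega^p(E)) = \ker\big(\bar\partial_E^*\bar\partial_E\big)$, and it suffices to prove that the second-order operator $L := \bar\partial_E^*\bar\partial_E$ on $\Gamma(\Omega^p(E))$ is elliptic.

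The key point is the computation of the principal symbol. For a nonzero real covector $\xi$, the symbol $\sigma_\xi(\bar\partial_E)$ sends $\alpha\otimes e$ to a nonzero multiple of $(\xi^{0,1}\wedge\alpha)\otimes e$, where $\xi^{0,1}$ is the $(0,1)$-part of $\xi$, since $\bar\partial_E=\pi^{p,1}\circ d$ tensored with the identity on $E$. Now any $\alpha$ in $\Lambda^{p,0}$ involves only $(1,0)$-covectors, and $\xi^{0,1}\neq 0$ whenever $\xi\neq 0$, so wedging with $\xi^{0,1}$ is injective on $\Lambda^{p,0}$ for every $0\le p\le n$. Hence $\sigma_\xi(\bar\partial_E)$ is injective, and $\sigma_\xi(L) = \sigma_\xi(\bar\partial_E)^*\sigma_\xi(\bar\partial_E)$ is a positive-definite, in particular invertible, endomorphism for all $\xi\neq 0$. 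Therefore $L$ is a formally self-adjoint, non-negative elliptic operator, and on the compact manifold $X$ its kernel is finite dimensional by the elliptic Hodge theory for almost Hermitian manifolds referenced above; this proves the first assertion. For the particular case, I take $E=\mathcal K^{\otimes(m-1)}$ and $p=n$: then $\Omega^n(E)=\Lambda^{n,0}\otimes\mathcal K^{\otimes(m-1)}=\mathcal K^{\otimes m}$, the induced pseudoholomorphic structure is exactly the operator $\bar\partial_m$ built by the product rule, and so $H^0(X,\mathcal K^{\otimes m})$ is the kernel of the corresponding $L$, hence finite dimensional.

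I expect the only genuine subtlety to be the following. Because $J$ need not be integrable, $\bar\partial_E^2$ need not vanish, so one cannot a priori organize $\bar\partial_E$ into an elliptic complex and appeal to Dolbeault-type Hodge theory in the usual way. The remedy, and the conceptual heart of the argument, is that for $H^0$ one never needs the complex: it suffices that $\bar\partial_E$ have injective principal symbol on $(p,0)$-forms, which already makes $\bar\partial_E^*\bar\partial_E$ elliptic by itself. Verifying this symbol injectivity — the observation that wedging with the $(0,1)$-part of a nonzero covector is injective precisely on forms of type $(p,0)$ — is the step that isolates why the statement is restricted to $\Omega^{p,0}$ and why it survives the loss of integrability.
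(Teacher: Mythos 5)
Your proof is correct and follows essentially the same route as the paper's (carried out in the reference [CZ]): identify $H^0(X,\Omega^p(E))$ with the kernel of the $\bar\partial_E$-Laplacian, verify ellipticity via the injectivity of the symbol $\alpha\otimes e\mapsto(\xi^{0,1}\wedge\alpha)\otimes e$ on $(p,0)$-forms, and conclude by elliptic/Hodge theory on the compact almost Hermitian manifold. Your closing remark correctly isolates the one genuine point, namely that the failure of $\bar\partial_E^2=0$ is irrelevant for $H^0$ since only the symbol of the single operator $\bar\partial_E$ is needed.
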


We have a second, more geometric, description of pseudoholomorphic sections. There is a type of special almost complex structures on the total space of the complex vector bundle $E$, called {\it bundle almost complex structures}, introduced in \cite{dBT}. They show that there is a bijection between bundle almost complex structures and the pseudoholomorphic structures on $E$. We further observe, in \cite{CZ}, that a section in the kernel of a pseudoholomorphic structure $\bar{\partial}_E$ is exactly a $(J, \mathcal J)$-holomorphic section with respect to the bundle almost complex structure $\mathcal J$ corresponding to $\bar{\partial}_E$ and the almost complex structure $J$ on $M$.

With these two equivalent descriptions understood, we are able to give our definition of $(E, \mathcal J)$-genus and the first definition of Iitaka dimension, as well as their special cases - the plurigenera and  the Kodaira dimension.

\begin{definition}\label{Iv1}
 The $(E, \mathcal J)$-genus of $(X, J)$ is defined as $P_{E, \mathcal J}:=\dim H^0(X, (E, \mathcal J))$. The $m^{th}$ plurigenus of $(X,J)$ is defined to be $P_m(X, J)=\dim H^0(X, \mathcal K^{\otimes m})$.

Let $L$ be a complex line bundle $L$ with bundle almost complex structure $\mathcal J$ over $(X, J)$. The Iitaka dimension $\kappa^J(X, (L, \mathcal J))$ is defined as
$$\kappa^J(X, (L, \mathcal J))=\begin{cases}\begin{array}{cl} -\infty, &\ \text{if} \ P_{L^{\otimes m}}=0\  \text{for any} \ m\geq 0\\
\limsup_{m\rightarrow \infty} \dfrac{\log P_{L^{\otimes m}}}{\log m}, &\  \text{otherwise.}
\end{array}\end{cases}$$

The Kodaira dimension $\kappa^J(X)$ is defined by choosing $L=\mathcal K$ and $\mathcal J$ to be the bundle almost complex structure induced by $\bar{\partial}$.
\end{definition}

For the second definition of Kodaria (and Iitaka) dimensions, we use generalization of pluricanonical maps for an almost complex manifold $(X, J)$. More generally, for any complex line bundle $L$ with a bundle almost complex structure $\mathcal J$, $\Phi_{L, \mathcal J}: X\setminus B\rightarrow \mathbb CP^{N}$ is defined as $\Phi_{L, \mathcal J}(x)=[s_0(x): \cdots :s_{N}(x)]$, where $s_i$ constitute a basis of the linear space $H^0(X, (L, \mathcal J))$ and $B$ is the base locus, {\it i.e.} the set of points $x\in X$ such that $s_i(x)=0, \forall i$. It is a pseudoholomorphic map. 
When $L=\mathcal K_X^{\otimes m}$ and $\mathcal J=\mathcal J_J$ induced by $\bar{\partial}_m$, $\Phi_{L, \mathcal J}$ is denoted by $\Phi_m$ and called the pluricanonical map.

We then have another version of Iitaka dimension for almost complex manifolds. 
\begin{definition}
The Iitaka dimension $\kappa_J(X, (L, \mathcal J))$ of a complex line bundle $L$ with bundle almost complex structure $\mathcal J$ over $(X, J)$ is defined as
$$\kappa_J(X, (L, \mathcal J))=\begin{cases}\begin{array}{cl} -\infty, &\ \text{if} \ P_{L^{\otimes m}}=0\  \text{for any} \ m\geq 0\\
\max\dim \Phi_{L^{\otimes m}} (X\setminus B_{m, L, \mathcal J}), &\  \text{otherwise.}\end{array}
\end{cases}$$

The Kodaira dimension $\kappa_J(X)$ is defined by choosing $L=\mathcal K$ and $\mathcal J$ to be the bundle almost complex structure induced by $J$.
\end{definition}

The two versions $\kappa^J$ and $\kappa_J$ are equal to each other and equal to the original Iitaka dimension when $J$ is integrable and $L$ is a holomorphic line bundle.

\begin{question}\label{k=k}
Is $\kappa_J(X, (L, \mathcal J))=\kappa^J(X, (L, \mathcal J))$ for a complex line bundle $L$ with bundle almost complex structure $\mathcal J$ over $(X, J)$? In particular, is $\kappa_J(X)=\kappa^J(X)$?
\end{question}
In general, we even do not know whether $\kappa^J$ is always an integer. In dimension $4$, Question \ref{k=k}  is equivalent to: if $\kappa_J(X, (L, \mathcal J))=1$, is it true that $\kappa^J(X, (L, \mathcal J))=1$?

Lastly, there is another generalization of (real) Dolbeault cohomology groups for almost complex manifolds which is introduced earlier in \cite{LZ}. The cohomology groups  $$H_J^{\pm}(M)=\{ \mathfrak{a} \in H^2(M;\mathbb R) | \exists \; \alpha\in \mathcal Z_J^{\pm} \mbox{ such that } [\alpha] = \mathfrak{a} \},$$ where $\mathcal Z_J^{\pm}$ are the spaces of closed $2$-forms in $\Omega_J^{\pm}$.  It is proved in \cite{DLZ} that $H_J^+(M)\oplus H_J^-(M)=H^2(M; \R)$ when $\dim_{\R} M=4$. The dimensions  of  the vector spaces $H_J^{\pm}(M)$ are denoted as $h_J^{\pm}(M)$.

 To show the plurigenera, as well as the $J$-anti-invariant dimension $h_J^-(X)=\dim H_J^-(X)$ and the Hodge numbers $h^{p,0}(X)=\dim H^0(X, \Omega^p(\mathcal O))$, are birational invariants  when $\dim X=4$, a crucial step, which certainly has its independent interest, is the following Hartogs extension theorem in almost complex setting \cite{CZ}. So far it is only established in dimension $4$ by the foliation-by-disks technique. The corresponding version for closed $J$-anti-invariant forms is established in \cite{BonZ}.

\begin{theorem}\label{hartogs}Let $(X,J)$ be an almost complex $4$-manifold and $p\in X$.
\begin{enumerate}
\item \cite{CZ} Let $(E, \mathcal J)$ be a complex vector bundle with a bundle almost complex structure over $(X, J)$. Then any section in $H^0(X \! \setminus \!  \{p\}, (E, \mathcal J)|_{X\setminus \{p\}})$ extends to a section in $H^0(X, (E, \mathcal J))$.

\item \cite{BonZ}
 Suppose that $p$ is in an open subset $U\subset X$ and $\alpha$ is a closed $J$-anti-invariant $2$-form defined on $U \! \setminus \! \{ p \}$. Then $\alpha$ extends smoothly to $U$.

\end{enumerate}
\end{theorem}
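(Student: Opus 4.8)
The plan is to reduce both statements to a one–variable removable–singularity argument along a foliation of a neighborhood of $p$ by $J$-holomorphic disks, manufacturing the extension from boundary data exactly as in the classical Hartogs phenomenon in complex dimension two. I begin with part (1). Working in a small coordinate ball $B$ centered at $p$, I would first invoke the foliation-by-disks technique (as in Lemma 3.10 of \cite{ZCJM}) to write $B$ as a smooth family of $J$-holomorphic disks $\{L_c\}$, where $c$ ranges over a transverse parameter disk and exactly one leaf $L_0$ passes through $p$, with $p$ an interior point of $L_0$. Because each leaf is a Riemann surface, the bundle almost complex structure restricts to a genuine holomorphic structure on $(E,\mathcal J)|_{L_c}$ by the Koszul–Malgrange theorem, and the given section restricts to a holomorphic section on every $L_c$: honestly holomorphic when $c\ne 0$, and holomorphic on the punctured disk $L_0\setminus\{p\}$ when $c=0$.

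The key point is that no a priori bound on $s$ near $p$ is needed: since the singular set is a single point, for a fixed small radius $\rho$ the radius-$\rho$ circles $\gamma_{c,\rho}\subset L_c$ all avoid $p$, so $s$ is defined and smooth on the whole $3$-dimensional family $\bigcup_c \gamma_{c,\rho}$. Choosing holomorphic trivializations of $E|_{L_c}$ that depend smoothly on $c$ (a holomorphic bundle over a disk is trivial), I would define the candidate extension $\tilde s$ leaf-wise by the Cauchy integral of $s$ over $\gamma_{c,\rho}$. Differentiation under the integral sign gives smoothness of $\tilde s$ in all variables, $\tilde s$ is holomorphic along every leaf, and by the one–variable identity principle $\tilde s=s$ on $B\setminus\{p\}$. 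It then remains to check that $\tilde s$ is genuinely pseudoholomorphic, i.e. $\bar\partial_E\tilde s=0$ including the transverse directions at $p$; since $\bar\partial_E\tilde s$ is a smooth section vanishing on $B\setminus\{p\}$ and $\{p\}$ has real codimension four, it vanishes identically by continuity, giving $\tilde s\in H^0(B,(E,\mathcal J))$.

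For part (2), I would view a closed $J$-anti-invariant $2$-form as a section of the complex line bundle $\Lambda_J^-$ equipped with its induced bundle almost complex structure. The content of the good–local–frame lemma (Lemma 2.3 of \cite{BonZ}) is precisely that, in a frame adapted to the foliation, a closed $J$-anti-invariant form restricts to a holomorphic function times the frame along each foliating disk; thus the closedness of $\alpha$ translates into exactly the leaf-wise holomorphicity that the argument above requires. Applying the scheme of part (1) to $E=\Lambda_J^-$ then produces a smooth section $\tilde\alpha$ on $U$ extending $\alpha$, and one concludes that $\tilde\alpha$ is again closed and $J$-anti-invariant by the same codimension-four continuity argument applied to $d\tilde\alpha$.

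I expect the main obstacle to be the passage from leaf-wise holomorphicity to full pseudoholomorphicity of $\tilde s$ in the non-integrable setting: because the foliating disks are only $J$-holomorphic and the bundle structure twists from leaf to leaf, the transverse component of the $\bar\partial_E$-equation does not decouple as cleanly as in the flat model, and controlling it is exactly where the adapted local frame and the ellipticity of the operator associated to $\bar\partial_E$ must be used. Establishing the smooth dependence of the leaf-wise holomorphic trivializations on the transverse parameter $c$, uniformly up to the special leaf $L_0$, is the technical heart of the argument.
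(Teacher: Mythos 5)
Your proposal follows essentially the same route as the paper: the theorem is established by the foliation-by-disks technique, reducing to leafwise holomorphicity (via Koszul--Malgrange for pseudoholomorphic sections, and via the good local frame of Lemma 2.3 of \cite{BonZ} for closed $J$-anti-invariant forms) followed by a Cauchy-integral removable-singularity argument, with the smooth dependence of the leafwise holomorphic trivializations on the transverse parameter being exactly the technical point you correctly identify. One small remark: since the singular set is a single point lying on only one leaf, the identity $\tilde s=s$ off $p$ follows from the Cauchy formula on the leaves missing $p$ together with continuity on the dense complement of $L_0$, rather than from a two-variable identity principle as in classical Hartogs.
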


A direct application is the following

\begin{theorem}\label{Kodbir}
Let $u: (X, J)\rightarrow (Y, J_Y)$ be a degree one pseudoholomorphic map between closed almost complex $4$-manifolds. Then 

\begin{itemize}

\item $P_m(X, J)=P_m(Y, J_Y)$ and thus $\kappa^{J}(X)= \kappa^{J_Y}(Y)$;

\item $\kappa_{J}(X)= \kappa_{J_Y}(Y)$;

\item $h^{p,0}(X)=h^{p,0}(Y)$ for any $0\le p\le 2$;

\item $h_J^-(X)=h_{J_Y}^-(Y)$.
\end{itemize}

In other words, all of these are birarional invariants.
\end{theorem}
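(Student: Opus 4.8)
The plan is to realize each of the four quantities as the dimension of a space that the pseudoholomorphic pullback $u^{*}$ identifies, the crucial structural input being that $u$ restricts to a pseudoholomorphic diffeomorphism over the complement of a finite set. First I would invoke Theorem~\ref{sing4to4}: for a degree one map the singularity subset supports a $J$-holomorphic $1$-subvariety and, away from finitely many points of $Y$, every preimage is a single point (there is no branching in the degree one case), so there is a finite set $M_{1}\subset Y$ with $u\colon X\setminus u^{-1}(M_{1})\to Y\setminus M_{1}$ a pseudoholomorphic diffeomorphism and $u^{-1}(M_{1})$ a $1$-subvariety (the exceptional curves); note this uses no almost K\"ahler hypothesis. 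Because $u$ is pseudoholomorphic, $du\circ J=J_{Y}\circ du$, so $du$ is complex linear and the pullback of an $(n,0)$-form is again of type $(n,0)$; composing with $\det(du)$ gives a smooth bundle map $u^{*}\mathcal K_{Y}^{\otimes m}\to\mathcal K_{X}^{\otimes m}$ on all of $X$ (and likewise $u^{*}\Lambda^{p,0}_{Y}\to\Lambda^{p,0}_{X}$) intertwining the $\bar\partial$-operators. This yields linear maps $u^{*}\colon H^{0}(Y,\mathcal K_{Y}^{\otimes m})\to H^{0}(X,\mathcal K_{X}^{\otimes m})$ and $u^{*}\colon H^{0}(Y,\Omega^{p}(\mathcal O))\to H^{0}(X,\Omega^{p}(\mathcal O))$.

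Next I would prove these maps are isomorphisms. Injectivity is clear: if $u^{*}s=0$ then $s$ vanishes on the dense open set $Y\setminus M_{1}$, hence everywhere by continuity. For surjectivity, given $t\in H^{0}(X,\mathcal K_{X}^{\otimes m})$, I would transport it by the inverse diffeomorphism to a pseudoholomorphic section $s_{0}=(u^{-1})^{*}t$ of $\mathcal K_{Y}^{\otimes m}$ over $Y\setminus M_{1}$ and then apply the almost complex Hartogs theorem (Theorem~\ref{hartogs}(1)) at each point of the finite set $M_{1}$ to extend $s_{0}$ to $s\in H^{0}(Y,\mathcal K_{Y}^{\otimes m})$. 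Since $u^{*}s$ and $t$ are continuous sections agreeing on $X\setminus u^{-1}(M_{1})$, they coincide, so $u^{*}s=t$. Hence $P_{m}(X,J)=P_{m}(Y,J_{Y})$, and the identical argument with $E=\Lambda^{p,0}$ gives $h^{p,0}(X)=h^{p,0}(Y)$. The equality $\kappa^{J}(X)=\kappa^{J_{Y}}(Y)$ is then immediate from the $\limsup\log P_{m}/\log m$ definition, while $\kappa_{J}(X)=\kappa_{J_{Y}}(Y)$ follows because a basis of $H^{0}(X,\mathcal K_{X}^{\otimes m})$ is the $u^{*}$-image of one on $Y$, so the pluricanonical maps satisfy $\Phi^{X}_{m}=\Phi^{Y}_{m}\circ u$ off the base and exceptional loci and therefore have images of equal dimension.

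For the anti-invariant cohomology I would run the same template with closed $2$-forms in place of sections. Pullback along $u$ sends closed $J_{Y}$-anti-invariant forms to closed $J$-anti-invariant forms (the identity $(u^{*}\alpha)(Jv,Jw)=\alpha(J_{Y}du\,v,J_{Y}du\,w)=-(u^{*}\alpha)(v,w)$ uses $du\circ J=J_{Y}\circ du$), and since $u^{*}$ commutes with $d$ it descends to a map on cohomology. Given a class in $H^{-}_{J}(X)$ represented by a closed $J$-anti-invariant form $\alpha$, I would transport $\alpha$ to $Y\setminus M_{1}$, extend across $M_{1}$ by Theorem~\ref{hartogs}(2) to a closed $J_{Y}$-anti-invariant form $\beta$ on $Y$, and observe that $u^{*}\beta=\alpha$ on a dense set and hence everywhere, so $[\alpha]=u^{*}[\beta]$ with $[\beta]\in H^{-}_{J_{Y}}(Y)$. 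Combined with the injectivity of $u^{*}$ on $H^{2}(Y;\R)$ (a degree one map satisfies $u_{*}u^{*}=\mathrm{id}$), this shows $u^{*}$ restricts to an isomorphism $H^{-}_{J_{Y}}(Y)\xrightarrow{\sim}H^{-}_{J}(X)$, giving $h^{-}_{J}(X)=h^{-}_{J_{Y}}(Y)$.

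The genuinely hard analytic content is packaged in the Hartogs extension theorem (Theorem~\ref{hartogs}), which I treat as given; within the present argument the point demanding most care is the asymmetry between the two sides of $u$. On the $Y$ side one only ever removes and reinserts the finite set $M_{1}$, exactly the situation Hartogs handles, whereas on the $X$ side the exceptional locus $u^{-1}(M_{1})$ is a $1$-subvariety rather than a finite set — the scheme succeeds precisely because we never extend across it, always pulling back from $Y$, where the pullback is automatically smooth through the exceptional curves. The remaining subtlety I would verify carefully is that the transport-and-extend construction genuinely reproduces the original object, i.e.\ the density/continuity identities $u^{*}s=t$ and $u^{*}\beta=\alpha$, so that $u^{*}$ is surjective and not merely injective with coincidentally matching dimensions.
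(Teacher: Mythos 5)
Your proposal is correct and follows essentially the same route as the paper: Theorem~\ref{Kodbir} is presented there as a direct application of the Hartogs extension theorem (Theorem~\ref{hartogs}) combined with the structure theory of degree one pseudoholomorphic maps (Theorems~\ref{sing4to4} and \ref{introphblowdown}(1), whose diffeomorphism-off-a-finite-set part does not need the almost K\"ahler hypothesis), which is precisely your transport-and-extend argument via $u^{*}$ and $(u^{-1})^{*}$. The details you supply — injectivity by density, surjectivity by extending across the finite set $M_{1}$, the vanishing of pulled-back sections on the contracted locus, and $u_{*}u^{*}=\mathrm{id}$ for the $h_{J}^{-}$ statement — match the proof given in the references \cite{CZ, CZ2, BonZ}.
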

Meanwhile, none of the above, except trivial cases $P_0$ and $h^{0,0}$, is a deformation, thus smooth, invariant, although all of these are when $(X, J)$ is a complex surface \cite{FQ}. On the other hand, by Theorem \ref{introphblowdown} (3), the fundamental group is an almost complex birational and topological invariant in dimension $4$. It would be interesting to show this is true in higher dimensions, even if assuming Question \ref{pcaholoco2}.  By the same argument as for Theorem \ref{sing4to4}, we know the singularity subset $\mathcal S_u$ of a degree one pseudoholomorphic map $u$ between $2n$ dimensional almost complex manifolds is a pseudoholomorphic $(n-1)$-subvariety up to Question \ref{pcaholoco2}. The map $u$ is of degree one would imply the preimage is either a single point or a higher dimensional subvariety. Thus the key part is to understand the structure of each connected component of $\mathcal S_u$.

These should be compared with symplectic birational invariants. In symplectic world, there is no definition of plurigenera and the Kodaira dimension is only defined for manifolds with dimension at most four in a numerical way. However, it is easy to see the fundamental group and the Artin-Mumford invariant Tor$H^3(X, \mathbb Z)$ are invariant under symplectic birational equivalence as they are preserved under blow-up, blow-down and symplectic deformation.

We would like to say a bit more on the almost complex Kodaira dimensions $\kappa_J$ and $\kappa^J$. Apparently, $J$ is integrable if $\kappa_J=2$. Since the images of pluricanonical maps are complex, the Kodaira dimension gives information on integrability of $J$. However, there exist non-integrable almost complex $2n$-manifolds with $\kappa_J=\kappa^J$ being any number of $\{-\infty, 0, 1, \cdots, n-1\}$ for any $n\ge 2$, as constructed in \cite{CZ} alongside many other interesting examples. In fact, one should aim to classify those almost complex manifolds with positive Kodaira dimension. For instance, we have shown that if $(X, J)$ is an almost complex $4$-manifold admitting a base-point-free pluricanonical map and $\kappa_J(X)=1$, then $X$ admits a pseudoholomorphic elliptic fibration with finitely many singular fibers. 

As a complex manifold could also be endowed with non-integrable almost complex structures and a symplectic manifold has compatible or tamed almost complex structures, we can also compare the value of $\kappa^J$ (or $\kappa_J$) with the original Kodaira dimension $\kappa^h$, or the symplectic Kodaira dimension $\kappa^s$ for $4$-dimensional symplectic manifolds \cite{Ltj}. 

\begin{theorem}[\cite{CZ2}]\label{compare}
Let $(X,J)$ be a tamed almost complex $4$-manifold. Then 
\begin{itemize}
\item $\kappa^J(X)\le \kappa^s(X)$;  
\item $\kappa^J(X)\le \kappa^h(X)$ when $X$ also admits a complex structure.
\end{itemize}
\end{theorem}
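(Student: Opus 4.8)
The plan is to prove the first inequality $\kappa^J(X)\le\kappa^s(X)$ and then deduce the second from it. For the second bullet, both $\kappa^s$ and the holomorphic Kodaira dimension $\kappa^h$ of a complex surface are oriented-diffeomorphism invariants computed from the Seiberg--Witten basic classes, and they coincide on any $4$-manifold carrying both a symplectic and a complex structure; hence once $J$ is tamed by $\omega$ we obtain $\kappa^J\le\kappa^s=\kappa^h$, and everything reduces to $\kappa^J\le\kappa^s$. The engine of that inequality is the following construction. For a tamed $J$ the almost complex canonical class $K_J=c_1(\mathcal K)$ equals the symplectic canonical class $K_\omega=-c_1(TX,J)$, and a nonzero $s\in H^0(X,\mathcal K^{\otimes m})$ is a pseudoholomorphic section whose zero set, by Taubes' Theorem \ref{pcaholo} together with the section version of the argument proving Theorem \ref{4ZJhol}, supports a $J$-holomorphic $1$-subvariety $D_m$ in the class $[D_m]=mK_\omega$. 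The two positivity inputs I would use repeatedly are that every nonempty $J$-holomorphic curve has positive $\omega$-area and that distinct irreducible $J$-holomorphic curves meet nonnegatively (Theorem \ref{MicW}).

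First I would reduce to the minimal case. Since $P_m$ is an almost complex birational invariant (Theorem \ref{Kodbir}) and $\kappa^s$ is invariant under symplectic blow-down, it suffices to prove the inequality for a minimal $(X,J)$; the passage to the minimal model is realized pseudoholomorphically by contracting $J$-holomorphic exceptional spheres, as described by Theorem \ref{introphblowdown}. On a minimal symplectic $4$-manifold the value of $\kappa^s$ is read off numerically from $K_\omega^2$ and $K_\omega\cdot\omega$ (see \cite{Ltj}): one has $\kappa^s=-\infty$ iff $K_\omega\cdot\omega<0$; $\kappa^s=0$ iff $K_\omega$ is torsion; $\kappa^s=1$ iff $K_\omega^2=0$ and $K_\omega\cdot\omega>0$; and $\kappa^s=2$ iff $K_\omega^2>0$ and $K_\omega\cdot\omega>0$. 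I would then treat each value in turn.

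The case $\kappa^s=2$ is automatic, since $\kappa^J\le n=2$ always. If $\kappa^s=-\infty$, i.e. $K_\omega\cdot\omega<0$, then a nonzero pluricanonical section would produce an effective $D_m$ with $\omega\cdot D_m=mK_\omega\cdot\omega<0$, impossible for a nonempty $J$-curve; hence $P_m=0$ for all $m\ge 1$ and $\kappa^J=-\infty$. If $\kappa^s=0$, i.e. $K_\omega$ is torsion, then any $D_m$ has $\omega\cdot D_m=mK_\omega\cdot\omega=0$, so $D_m$ is empty and $s$ is nowhere vanishing; such a section trivializes $\mathcal K^{\otimes m}$ as a pseudoholomorphic line bundle, whence $H^0(X,\mathcal K^{\otimes m})\cong H^0(X,\mathcal O_X)=\mathbb C$ by the maximum principle for pseudoholomorphic functions, so $P_m\le 1$ and $\kappa^J\le 0$.

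The main obstacle is the case $\kappa^s=1$, where $K_\omega^2=0$ and $K_\omega\cdot\omega>0$, and I must show the plurigenera grow at most linearly so that $\kappa^J\le 1$. Here $[D_m]^2=m^2K_\omega^2=0$, so any two members of the system $|mK_\omega|$ meet with intersection number $0$; positivity of intersection then forbids a two-dimensional moving part and forces the mobile system to define a pseudoholomorphic fibration $\pi:X\to B$ over a curve whose general fiber $F$ satisfies $K_\omega\cdot F=0$. Restricting to $F$, the bundle $\mathcal K^{\otimes m}|_F$ has degree $mK_\omega\cdot F=0$ on the Riemann surface $F$, so it carries at most one section; feeding this into an induction that peels off fibers, where sections vanishing on $F$ are sections of $\mathcal K^{\otimes m}\otimes\mathcal O(-F)$, bounds $P_m=O(m)$. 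The genuine difficulty is that the non-integrability of $J$ deprives us of Riemann--Roch and Kodaira vanishing, so this linear bound cannot be imported from cohomology and must be extracted by hand from the fibration and the degree-zero-on-a-curve estimate; moreover one must argue directly about the analytic growth of $P_m$ rather than about the dimension of the pluricanonical image, since the equality $\kappa^J=\kappa_J$ of the two almost complex Kodaira dimensions is still open (Question \ref{k=k}).
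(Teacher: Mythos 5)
Your overall architecture --- pass to a minimal model, read $\kappa^s$ off numerically, and run a case analysis using tameness and positivity of intersections --- is not the route taken in \cite{CZ2}: there the inequality $\kappa^J\le\kappa^s$ is obtained by bounding $P_m$ through the dimension of the moduli space of $J$-holomorphic subvarieties in the class $mK_J$ and estimating the growth of that dimension by a probabilistic combinatorial argument, with no reduction to a minimal model. More importantly, two steps of your argument have genuine gaps. First, the reduction to the minimal case is not available: Theorem \ref{introphblowdown} describes the structure of a degree one pseudoholomorphic map that is \emph{given}; it does not produce one. There is no known construction in the almost complex category of a pseudoholomorphic blow-down of a $J$-holomorphic $(-1)$-sphere, so you cannot invoke the birational invariance of $P_m$ (Theorem \ref{Kodbir}) to replace $(X,J)$ by a minimal model. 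You also misstate the numerical characterization: for minimal $(X,\omega)$ one has $\kappa^s=-\infty$ iff $K_\omega\cdot\omega<0$ \emph{or} $K_\omega^2<0$, so your trichotomy omits the irrational ruled surfaces with $K_\omega\cdot\omega\ge 0$ and $K_\omega^2<0$.

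Second, and decisively, the $\kappa^s=1$ case does not go through as sketched. Extracting a pseudoholomorphic Iitaka-type fibration from the mobile part of $|mK_J|$ is exactly the kind of statement that is hard here (the paper only obtains an elliptic fibration under the extra hypotheses that the pluricanonical map is base-point-free and $\kappa_J=1$), and your ``peeling off fibers'' induction requires identifying sections of $\mathcal K^{\otimes m}$ vanishing on a fiber $F$ with sections of $\mathcal K^{\otimes m}\otimes\mathcal O(-F)$. That identification presupposes that $F$ is the zero locus of a pseudoholomorphic section of a line bundle and that such sections divide; but, as the introduction of this paper emphasizes, the image of a $J$-holomorphic curve is in general \emph{not} the zero locus of pseudoholomorphic sections of any complex line bundle, even locally. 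So the Riemann--Roch-free bookkeeping you propose cannot be set up, and the linear growth bound --- the whole content of the theorem in this case --- is left unproved. The parts that do work are the easy ones: $\kappa^s=2$ is vacuous, $K_\omega\cdot\omega<0$ kills all plurigenera by tameness, a nowhere-vanishing pseudoholomorphic section trivializes $\mathcal K^{\otimes m}$ so that $P_m\le 1$ when $K_\omega$ is torsion, and the reduction of the second bullet to the first via $\kappa^s=\kappa^h$ for manifolds carrying both structures is a legitimate use of a theorem of T.\,J.~Li.
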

In dimension $4$, both $\kappa^s$ and $\kappa^h$ are smooth invariants. Thus we can write $\kappa^s(X)$ and $\kappa^h(X)$ instead of $\kappa^s(X, \omega)$ and $\kappa^h(X, J)$. The proof of Theorem \ref{compare} uses an interesting probabilistic combinatorial type argument to estimate the growth of the dimension of moduli spaces of curves in pluricanonical classes.

The almost complex Kodaira dimensions and plurigenera are very computable invariants.  Besides the elliptic fibration examples with large Kodaira dimensions mentioned above, \cite{CZ} also computes them for other families including almost complex structures on $4$-torus and Kodaira-Thurston manifolds as well as the standard almost complex structures on $S^6$. These computations reveal many interesting phenomenon of these numbers. Recently, explicit computations are also made on Nakamura manifolds by Cattaneo-Nannicini-Tomassini \cite{CNT}. In their paper, relations between Kodaira dimension and the curvature of the canonical connection of an almost K\"ahler manifolds are also explored.

\section{Other directions}
In this section, we will discuss several other directions of almost complex geometry. Results in Sections \ref{lap} and \ref{cone} were known in the complex setting. However, many results in Section \ref{sphere} seem new even for projective surfaces.

\subsection{Eigenvalues of Laplacian}\label{lap}
The theory of pseudoholomorphic subvarieties could also help to obtain upper bounds for the eigenvalues of the Laplacian on almost K\"ahler manifolds.

In his Warwick thesis \cite{Bon}, Louis Bonthrone obtains the following

\begin{theorem}\label{Bonth}
Let $(M^{2n}, J)$ be closed, almost K\"ahler and $\phi\in W^{1,2}_{loc}(M, \mathbb CP^m)$ a non-trivial, locally approximable pseudoholomorphic map. Then, for any almost K\"ahler metric $g$ on $M$, the eigenvalues of the Laplace-Beltrami operator satisfy $$\lambda_k(M, g)\le C(n, m)\frac{\int_M\phi^*\omega_{FS}\wedge \omega_g^{n-1}}{\int_M\omega_g^n}k, \, \, \, \forall k\ge 1.$$
\end{theorem}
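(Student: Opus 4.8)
The plan is to import the now-classical method of Korevaar--Grigor'yan--Netrusov--Yang for bounding Laplace eigenvalues by conformal volume data, adapting it so that the relevant ``conformal volume'' is replaced by the pullback symplectic area $\int_M \phi^*\omega_{FS}\wedge\omega_g^{n-1}$. The key analytic engine is the variational characterization $\lambda_k = \inf \sup_{u}\, \mathcal R(u)$, where $\mathcal R(u)=\int_M|\nabla u|^2/\int_M u^2$ is the Rayleigh quotient and the infimum runs over $(k+1)$-dimensional subspaces of $W^{1,2}(M,g)$. To produce $k+1$ competing test functions whose Rayleigh quotients are all controlled, I would follow the decomposition philosophy: partition $M$ into $k$ (plus overlap) pieces, each carrying a comparable amount of the measure $d\mu=\phi^*\omega_{FS}\wedge\omega_g^{n-1}$, and build a bump function supported on each piece. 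The crucial point is that $d\mu$ is a nonnegative measure precisely because $\phi$ is pseudoholomorphic and $(M,J,g)$ is almost K\"ahler, so $\phi^*\omega_{FS}$ is a nonnegative $(1,1)$-type form and wedging with $\omega_g^{n-1}$ keeps it nonnegative; this positivity is exactly the ``positivity'' feature highlighted in the paper's philosophy and is what lets $\mu$ play the role of a conformal volume.

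First I would establish the \emph{energy--area identity}: for a pseudoholomorphic map between almost K\"ahler manifolds the Dirichlet energy density of $\phi$ equals (up to a dimensional constant) the density of $\phi^*\omega_{FS}$ against $\omega_g^{n-1}$, so that $\int_M|\nabla\phi|^2\,dV_g \le C(n,m)\int_M\phi^*\omega_{FS}\wedge\omega_g^{n-1}$. This is the almost-complex analogue of the statement that holomorphic maps are conformal/energy-minimizing, and the local approximability hypothesis on $\phi\in W^{1,2}_{\mathrm{loc}}$ is what guarantees this identity holds weakly and that $\phi$ is not merely distributional. Next I would use the target structure: composing $\phi$ with the standard coordinate functions (or equivalently the components of the moment-type map) on $\mathbb{CP}^m$ gives a fixed finite family of functions on $M$ whose gradients are controlled pointwise by $|\nabla\phi|$, hence in $L^2$ by the energy bound. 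The heart of the argument is then a covering/capacity lemma (in the Grigor'yan--Netrusov--Yang form) applied to the metric measure space $(M,g,\mu)$: it produces $k$ disjoint annular regions on each of which $\mu$ has mass comparable to $\tfrac1k\mu(M)$ and which are mutually $g$-separated, yielding disjointly supported test functions.

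The test functions I would feed into the Rayleigh quotient are products of these coordinate-pullback functions with the cutoffs coming from the covering lemma, so that the numerator $\int|\nabla u|^2$ is controlled by the local $\mu$-mass (via the energy--area identity) while the denominator $\int u^2\,dV_g$ is bounded below. Summing the local mass bounds $\tfrac1k\mu(M)$ across the $k$ pieces is what produces the linear-in-$k$ factor and the global constant $C(n,m)\,\mu(M)/\mathrm{Vol}_g(M)=C(n,m)\int_M\phi^*\omega_{FS}\wedge\omega_g^{n-1}/\int_M\omega_g^n$. The main obstacle, I expect, is the passage from the smooth KGNY machinery to the merely $W^{1,2}_{\mathrm{loc}}$, locally approximable map: one must make sense of $\phi^*\omega_{FS}$ as a genuine nonnegative measure, verify the energy--area identity for weak pseudoholomorphic maps (approximating by smooth maps and controlling the defect of the nonnegativity under the limit), and ensure the test functions remain admissible in $W^{1,2}(M,g)$ without curvature hypotheses on $g$ beyond the almost K\"ahler condition. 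Once the measure $\mu$ is legitimized and the energy bound is in hand, the covering lemma is a black box and the constant $C(n,m)$ tracks only the number of coordinate functions ($\sim m$) and the dimensional constants in the separation lemma.
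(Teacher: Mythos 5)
The paper itself contains no proof of this statement: Theorem \ref{Bonth} is quoted from Bonthrone's thesis \cite{Bon}, and the surrounding text only places it in the lineage of Bourguignon--Li--Yau \cite{BLY} (the case $k=1$, via coordinate functions on $\mathbb{CP}^m$ and projective renormalization) and Kokarev \cite{Kok} (all $k$, K\"ahler case, via a Grigor'yan--Netrusov--Yang decomposition). Your overall plan --- the pointwise energy--area identity for pseudoholomorphic maps, the positivity of $\phi^*\omega_{FS}\wedge\omega_g^{n-1}$, test functions pulled back from $\mathbb{CP}^m$, and a GNY-type decomposition --- is the right lineage, but the accounting has two genuine gaps. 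First, you propose to run the covering lemma on $(M,g,\mu)$. The multiplicity/covering constant in GNY then depends on the geometry of $(M,g)$, so the resulting constant cannot be $C(n,m)$. In this circle of arguments the decomposition is performed on the \emph{fixed} space $(\mathbb{CP}^m, d_{FS})$ applied to pushforward measures, precisely so that every covering constant depends only on $m$; the test functions are cutoffs on annuli in $\mathbb{CP}^m$ composed with $\phi$.

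Second, the roles of the two measures are reversed, and your stated source of the factor $k$ does not work. If each of $k$ pieces carries $\mu$-mass about $\mu(M)/k$ (controlling numerators) while the denominators are merely ``bounded below,'' then either the bound you get is $k$-independent --- impossible, since $\lambda_k\to\infty$ --- or the denominators are uncontrolled; and ``summing $\mu(M)/k$ over $k$ pieces'' yields $\mu(M)$, not $k\,\mu(M)$. The correct bookkeeping is: equidistribute the pushforward of $dV_g$ (not of $\mu$) so that each test function satisfies $\int u_i^2\,dV_g\ge c\,\mathrm{Vol}_g(M)/k$, and bound every numerator by the single global quantity $C(n,m)\int_M\phi^*\omega_{FS}\wedge\omega_g^{n-1}$, \emph{uniformly over all annuli}. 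That uniformity is the key ingredient you are missing: it comes from renormalizing by projective transformations $\Psi\in PGL(m+1,\mathbb{C})$ (the Hersch/Li--Yau trick), which requires $\int_M\phi^*(\Psi^*\omega_{FS})\wedge\omega_g^{n-1}=\int_M\phi^*\omega_{FS}\wedge\omega_g^{n-1}$, i.e.\ that $\phi^*$ of the exact form $\Psi^*\omega_{FS}-\omega_{FS}$ integrates to zero against $\omega_g^{n-1}$. This is exactly where the local approximability hypothesis enters in force, via the characterization $d(\phi^*\alpha)=0$ for all closed $\alpha$ recorded in the paper from \cite{BCDH}; you invoke that hypothesis only to legitimize $\mu$ as a measure and miss this second, essential use.
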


A map $\phi\in W^{1,2}_{loc}(M,  N)$ is called locally approximable if for any ball $\bar B\subset M$ there exists a sequence $\phi_i\in C^{\infty}(M, N)$ such that $\phi_i\rightarrow \phi$ strongly in $W^{1,2}(B, N)$. It is proved in \cite{BCDH} that $\phi\in W^{1,2}_{loc}(M,  N)$ is locally approximable if and only if $d(\phi^*\alpha)=0, \forall \alpha\in \mathcal Z^2(N)$, holds in the sense of currents. 

This type of upper bound could date back to Bourguignon-Li-Yau's bound for the first eigenvalue for a given K\"ahler metric on a projective manifold \cite{BLY} where the above constant $C(n, m)$ can be chosen as $4n\frac{m+1}{m}$ if there exists a holomorphic immersion $\phi: M^{2n}\rightarrow \mathbb CP^m$. Recently Kokarev \cite{Kok} has extended their result to the same type of upper bounds on the $k$-th eigenvalue for K\"ahler manifolds which admits a non-trivial holomorphic map  $\phi: M\rightarrow \mathbb CP^m$. 

However, we would like to remark that even in the K\"ahler case, Theorem \ref{Bonth} says somethings new. In fact, it applies to all rational maps $\phi: M\dashrightarrow \mathbb CP^m$ since they are locally approximable and belong to  $W^{1,2}_{loc}(M, \mathbb CP^m)$.

Theorem \ref{Bonth} also applies to complex vector bundles over a compact almost complex manifold. For a pseudoholomorphic complex vector bundle $E$ which is globally generated by pseudoholomorphic sections, we have the inequality

$$\lambda_k(M, g)\le C(n, m)\frac{(c_1(E)\cup  [\omega_g]^{n-1}, [M])}{([\omega_g]^n, M)}k, \, \, \, \forall k\ge 1.$$
This is derived by applying Theorem \ref{Bonth} to the map $K_E: M\rightarrow \mathbb CP^m$ which is obtained as the composition of  the Kodaira map $\kappa_E: M\rightarrow Gr(r, \dim H^0(X, E))$ and the Pl\"uker embedding of the latter. In particular, it is applicable to our examples with $\kappa^J\ge 1$ in \cite{CZ}. In fact, the above statement is also true when the locus, where $E$ is not globally generated, is of complex codimension at least two, {\it e.g.} a complex line bundle whose base locus is of complex codimension at least two.

One can also obtain a version of Theorem \ref{Bonth} for pseudoholomorphic subvarieties of almost K\"ahler manifolds, whose K\"ahler version has been proved in \cite{Kok}. 

\begin{theorem}\cite{Bon}
Let $\Sigma^{2n}\subset M^{2n+2l}$ be a irreducible pseudoholomorphic subvariety in almost K\"ahler manifold $M$ with pseudoholomorphic map $\phi: M\rightarrow \mathbb CP^m$ restricting non-trivially on $\Sigma$. Then, for any almost K\"ahler metric $g$ on $M$, the Laplace eigenvalues of $(\Sigma, g_{\Sigma})$ satisfy 
$$\lambda_k(\Sigma, g_{\Sigma})\le C(n, m)\frac{\int_{\Sigma}\phi^*\omega_{FS}\wedge \omega_g^{n-1}}{\int_M\omega_g^n}k, \,\, \, \forall k\ge 1.$$
\end{theorem}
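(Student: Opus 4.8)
The plan is to carry out the Bourguignon--Li--Yau--Kokarev variational scheme that underlies Theorem~\ref{Bonth} on $\Sigma$ itself, after first replacing $\Sigma$ by a smooth model so that its spectral geometry is unambiguous. Being an irreducible pseudoholomorphic subvariety, $\Sigma$ is by definition the image of a somewhere immersed pseudoholomorphic map $\psi\colon X\to M$ from a closed connected almost complex $2n$-manifold $(X,J_X)$, and $\psi$ restricts to a diffeomorphism from $X\setminus S$ onto a full-measure subset of the smooth locus $\Sigma_{\mathrm{reg}}$, where $S\subset X$ is a pseudoholomorphic subvariety of real codimension at least two. First I would note that the composite $\Phi:=\phi\circ\psi\colon X\to\mathbb{CP}^m$ is again pseudoholomorphic and non-trivial, and that $\omega_X:=\psi^*\omega_g$ is a closed, $J_X$-compatible $2$-form that is non-degenerate off $S$. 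Since $S$ carries finite $(2n-2)$-dimensional Hausdorff measure it has vanishing $W^{1,2}$-capacity, hence is invisible to the Dirichlet form; the min--max eigenvalues of $(\Sigma,g_\Sigma)$ therefore coincide with those attached to the mildly degenerate pair $(X,\psi^*g)$, and every integral over $\Sigma$ below may be read as an integral over $X$ through $\psi$.

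The two analytic ingredients are then exactly those in the proof of Theorem~\ref{Bonth}. The first is the Wirtinger identity for the pseudoholomorphic map $\Phi$: pointwise on $X\setminus S$ its energy density equals, up to a universal constant absorbed into $C(n,m)$, the density of $\Phi^*\omega_{FS}\wedge\omega_X^{n-1}$, so that the Dirichlet energy of $\Phi$ is proportional to $\int_X\Phi^*\omega_{FS}\wedge\omega_X^{n-1}=\int_\Sigma\phi^*\omega_{FS}\wedge\omega_g^{n-1}$, the last equality holding by the projection formula since $\psi$ has degree one onto $\Sigma$. The second is the test-function construction: one pulls back the standard coordinate functions on $\mathbb{CP}^m$ through $\Phi$ after a M\"obius renormalization, a topological fixed-point argument furnishing a conformal automorphism of $\mathbb{CP}^m$ for which the pulled-back coordinates are $L^2(\Sigma,g_\Sigma)$-orthogonal to the constants (and, in the later steps, to the lower eigenspaces), while the chain rule bounds the Dirichlet energy of each such function by the energy density of $\Phi$. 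For $\lambda_1$ these two facts give the estimate directly; for general $\lambda_k$ I would feed the pushforward measure $\Phi_*(\omega_X^n)$ on $\mathbb{CP}^m$ into the measure-decomposition lemma of Grigor'yan--Netrusov--Yau in the form used by Kokarev, splitting it into $k$ pieces of comparable mass with disjoint supports, building a trial function adapted to each piece, and assembling a $(k+1)$-dimensional trial space, which produces the growth that is linear in $k$.

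The step I expect to be the genuine obstacle is the first one: making the spectral theory of the singular, a priori merely measurable object $(\Sigma,g_\Sigma)$ precise and compatible with the variational machinery. One must check that smooth compactly supported functions on $\Sigma_{\mathrm{reg}}$ of finite energy form a core for the Dirichlet form, with no boundary term appearing at $S$ nor at the singular points of $\Sigma$. This is precisely where the codimension-two bound on the bad set---together with the finiteness of the $2n$-dimensional Hausdorff measure of $\Sigma$, which is built into its being a pseudoholomorphic subvariety---is indispensable: it provides the capacity vanishing that lets one truncate near the singular set at arbitrarily small energy cost, so that cutting off and passing to the limit alters neither the energies nor the $L^2$ norms. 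Once this removability is in hand, every quantity in the argument is pulled back from the smooth target $\mathbb{CP}^m$ and the singularities never enter the construction of the trial functions, so the remainder is identical to the closed almost K\"ahler case already established in Theorem~\ref{Bonth}.
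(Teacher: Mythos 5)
The paper itself contains no proof of this statement: it is quoted from Bonthrone's thesis \cite{Bon}, with the surrounding text indicating only that it is the subvariety analogue of Theorem~\ref{Bonth}, whose K\"ahler case is Kokarev's \cite{Kok}. Your proposal follows exactly that intended route --- the Wirtinger identity equating the Dirichlet energy of the pseudoholomorphic map with $\int\Phi^*\omega_{FS}\wedge\omega^{n-1}$, the Hersch/M\"obius renormalization, and the Grigor'yan--Netrusov--Yau measure decomposition for higher $k$, all run on the smooth model of $\Sigma$ after excising the singular set by a capacity argument --- and you correctly isolate the one genuinely new point, namely that the variational spectrum of the singular pair $(\Sigma,g_\Sigma)$ is computed by the Dirichlet form on the regular locus, for which the finite codimension-two Hausdorff measure of the bad set (supplied by the paper's structural results on pseudoholomorphic maps) gives the needed capacity vanishing. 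One small remark: the normalizing denominator in the statement should be read as $\int_\Sigma\omega_g^n$, the volume of $\Sigma$, since $\omega_g^n$ is a $2n$-form on the $(2n+2l)$-dimensional $M$; your reduction of all integrals to the smooth model implicitly does this, but it is worth saying explicitly.
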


\subsection{Cones of (co)homology classes for almost complex manifolds}\label{cone}
Our results in previous sections could be viewed as developing algebraic geometry on almost complex manifolds. Before \cite{ZCJM}, we have already studied several aspects of algebraic geometry for almost complex manifolds in several papers including \cite{LZrc, p=h, Zmod}. However, the argument to prove most of these results, which would be partially reviewed in the following two subsections, only works in dimension four as we use Seiberg-Witten theory in an essential way.

The study of various cones, {\it e.g.} the cone of curves, the ample cone, and the K\"ahler cone, has played a very important role in algebraic geometry. Remarkable results include Mori's cone theorem, Nakai-Moishezon's and Kleiman's ampleness criteria, and Demailly-Paun's description of K\"ahler cone. In \cite{p=h}, we study the cone of curves and the corresponding duality results for almost complex manifolds. 

For  tamed almost complex manifold $(M, J)$, we define the cone of curves
$A_J(M)=\{\sum a_i[C_i]| a_i> 0\},$ where $C_i$ are irreducible $J$-holomorphic curves on $M$. When $\dim M=4$, by taking the Poincar\'e duality, we know $A_J(M)$ is a cone in the vector space $ H_J^+(M)\subset H^2(M; \mathbb R)$. The tameness of $J$ implies $A_J(M)$ does not contain a straight line through the origin. If we denote $A_J^{K_J\ge 0}(M)=\{C\in A_J(M)| K_J\cdot C\ge 0\}$ to be the ``positive" part of the curve cone, we have the Mori's cone theorem for an arbitrary tamed almost complex $4$-manifold \cite{p=h}.

\begin{theorem}\label{coneintro}
Let $(M, J)$ be a tamed almost complex $4$-manifold. Then $$\overline{A}_J(M)=\overline{A}_J^{K_J\ge 0}(M)+\sum \mathbb R^+[L_i]$$ where $L_i\subset M$ are countably many smooth irreducible rational curves such that $-3\le K_J\cdot [L_i]<0$ which span the extremal rays $\mathbb R^+[L_i]$ of $\overline{A}_J(M)$.

Moreover, for any $J$-almost K\"ahler symplectic form $\omega$ and any given $\epsilon>0$, there are only finitely many extremal rays with $(K_J+\epsilon[\omega])\cdot [L_i]\le 0$.

In addition, an irreducible curve $C$ is an extremal rational curve if and only if 

\begin{enumerate}
\item $C$ is a $-1$ rational curve;
\item $M$ is a minimal ruled surface or $\mathbb CP^2\# \overline{\mathbb CP^2}$, and $C$ is a fiber;
\item $M=\mathbb CP^2$ and $C$ is a projective line.
\end{enumerate}
\end{theorem}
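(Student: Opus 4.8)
The plan is to transplant the surface version of Mori's cone theorem---where one avoids bend-and-break entirely and argues purely through the Lorentzian geometry of the intersection form---into the tamed almost complex setting, using Seiberg--Witten theory and Gromov compactness as substitutes for the algebro-geometric inputs. First I would fix a taming form $\omega$ and work inside $H_J^+(M)$, on which the cup product is a Lorentzian form with $[\omega]^2>0$; tameness gives $[\omega]\cdot C>0$ for every irreducible $J$-holomorphic curve $C$, so $\overline{A}_J(M)$ is a salient cone. The first substantive step is to show that the closed positive cone $\overline{P}=\{x : x^2\ge 0,\ [\omega]\cdot x\ge 0\}$ is contained in $\overline{A}_J(M)$, i.e. that every class of positive square and positive $\omega$-pairing is represented, after scaling, by a $J$-holomorphic subvariety. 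This is exactly where Seiberg--Witten theory enters essentially---via Taubes' $SW=Gr$ and its consequences in dimension $4$---and cannot be replaced by a transversality argument, which is the whole point of the philosophy driving the paper.

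Granting $\overline{P}\subset\overline{A}_J(M)$, the decomposition $\overline{A}_J(M)=\overline{A}_J^{K_J\ge 0}(M)+\sum\mathbb{R}^+[L_i]$ becomes a statement about the boundary of a cone containing a round Lorentzian cone. I would then analyze an extremal ray $R$ with $K_J\cdot R<0$. The light cone lemma shows that any class of positive self-intersection lies in the interior of $\overline{P}$ and so cannot span an extremal ray unless the cone is one-dimensional; thus either $b_2(M)=1$, forcing $M=\mathbb{CP}^2$, or the extremal class $\alpha$ satisfies $\alpha^2\le 0$. Extremality together with the production of curves in $K_J$-negative classes by Seiberg--Witten theory (plus Gromov compactness to pass to a convergent limit) forces $R$ to be generated by the class of a single irreducible $J$-holomorphic curve $L$: a class on an extremal ray decomposes into irreducible effective pieces, all of which must again lie on $R$.

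The classification into the three cases is then pure adjunction bookkeeping. For an irreducible curve $L$ the adjunction relation $L\cdot L+K_J\cdot L=2g_L-2+2\delta_L$ with $g_L,\delta_L\ge 0$ yields $L^2\ge -2-K_J\cdot L>-2$, while the light cone lemma gives $L^2\le 1$, with $L^2=1$ only in the rank-one case $M=\mathbb{CP}^2$. Running through $L^2\in\{-1,0,1\}$ and using the parity of $2g_L-2+2\delta_L$ forces $(L^2,K_J\cdot L)\in\{(-1,-1),(0,-2),(1,-3)\}$ together with $g_L=\delta_L=0$ in each case, so that $L$ is a smooth rational curve with $-3\le K_J\cdot L<0$. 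The self-intersection-zero and self-intersection-one cases exhibit a symplectic sphere of non-negative square, so the rational/ruled classification of symplectic $4$-manifolds (McDuff, Taubes, Li--Liu) identifies $M$ as a minimal ruled surface, $\mathbb{CP}^2\#\overline{\mathbb{CP}^2}$, or $\mathbb{CP}^2$, with $L$ a fiber or a line, giving cases (2) and (3); the $L^2=-1$ case is a $-1$ rational curve, case (1).

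Finally, for the local finiteness assertion I would fix a $J$-almost K\"ahler $\omega$ and $\epsilon>0$: any extremal $L_i$ with $(K_J+\epsilon[\omega])\cdot L_i\le 0$ satisfies $[\omega]\cdot L_i\le -\frac{1}{\epsilon}K_J\cdot L_i\le \frac{3}{\epsilon}$, so all such curves have uniformly bounded symplectic area; Gromov compactness and the discreteness of $H_2(M;\mathbb{Z})$ then leave only finitely many such homology classes, hence finitely many such rays. I expect the genuinely hard part to be the first step---filling the positive cone with effective $J$-holomorphic classes and representing extremal $K_J$-negative classes by honest irreducible subvarieties---since this is precisely where one must run the Seiberg--Witten machinery together with Gromov compactness in the non-integrable setting rather than appeal to Riemann--Roch; everything downstream is Lorentzian geometry and adjunction.
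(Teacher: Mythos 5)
Your outline is essentially Kleiman's proof of the cone theorem for projective surfaces, and its load-bearing step --- the inclusion $\overline{P}=\{x : x^2\ge 0,\ [\omega]\cdot x\ge 0\}\subset\overline{A}_J(M)$ --- is precisely the step that does not survive the passage from Riemann--Roch to Seiberg--Witten theory. When $b^+(M)>1$, Taubes' $SW=Gr$ produces $J$-holomorphic subvarieties only in the finitely many Seiberg--Witten basic classes, so there is no mechanism for representing an open cone's worth of classes; for a tamed non-integrable $J$ on a manifold with $b^+>1$ the curve cone can be far smaller than $\overline{P}$, and the asserted inclusion is simply false. (It is essentially correct for $b^+=1$ via the Li--Liu wall-crossing formula, which is how Theorem \ref{rr} is approached --- but Theorem \ref{coneintro} is claimed for every tamed $J$.) Relatedly, your premise that the cup product on $H_J^+(M)$ is Lorentzian also fails for $b^+>1$: one only has $h_J^-\le b^+-1$ with $h_J^+=b_2-h_J^-$, and generically $h_J^-=0$, so the restricted form has signature $(b^+,b^-)$. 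Without the Lorentzian structure and without $\overline{P}\subset\overline{A}_J(M)$, both the light cone lemma and your argument that a $K_J$-negative extremal ray is generated by a single irreducible curve collapse. A smaller issue: even granting a Lorentzian setting, extremality gives $L^2\le 0$ or rank one, not $L^2\le 1$; in the rank-one case the bound $-K_J\cdot L\le 3$ comes from the normalization built into the definition of an extremal rational curve, not from intersection-form geometry.

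The route the paper indicates runs in the opposite direction. Instead of first filling a large cone and then cutting down its boundary, one starts from the pointwise Lemma 2.1 of \cite{p=h}: for an irreducible $J$-holomorphic curve $C$ with $K_J\cdot[C]<0$, the adjunction inequality $[C]^2+K_J\cdot[C]\ge -2$ alone forces $C$ to be a smooth $-1$ rational curve when $[C]^2<0$, while the case $[C]^2\ge 0$ forces $M$ to be rational or ruled via the McDuff/Taubes/Li--Liu classification --- this, together with the adjunction inequality from Seiberg--Witten theory, is where gauge theory actually enters, not through representing the positive cone. Since $\overline{A}_J(M)$ is by construction the closed cone generated by classes of irreducible curves, the decomposition $\overline{A}_J(M)=\overline{A}_J^{K_J\ge 0}(M)+\sum\mathbb{R}^+[L_i]$ follows by sorting the irreducible generators according to the sign of $K_J\cdot[C]$ and treating the rational and ruled manifolds by hand. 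Your adjunction bookkeeping for the three types of extremal curves and your Gromov-compactness argument for the finiteness of rays with $(K_J+\epsilon[\omega])\cdot[L_i]\le 0$ are consistent with this and would survive, but as written your proposal establishes the theorem only when $b^+=1$.
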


This cone theorem is a powerful tool in studying pseudoholomorphic curves on $4$-manifolds. For example, the first step to establish it is a simple but very useful lemma (Lemma 2.1 in \cite{p=h}), which says any irreducible $J$-holomorphic curve $C$ with $K_J\cdot [C]<0$ has to be a $-1$ rational curve. This has been used in different settings, {\it e.g.} pseudoholomorphic foliation, (equivariant) almost complex birational geometry, and length of Wahl singularities. A quick corollary of the cone theorem also implies for any symplectic $4$-manifold which contains smooth $-1$ spheres and is not diffeomorphic to $\mathbb CP^2\# \overline{\mathbb CP^2}$, there exists at least one smooth $J$-holomorphic $-1$ rational curve for any tamed $J$.

We have also established some partial results on the almost complex version of the Nakai-Moishezon and the Kleiman type dualities between the curve cone and the almost K\"ahler cone $$\mathcal K_J^c=\{[\omega]\in H^2(M; \mathbb R)| \omega \mbox{ is compatible with }J\}.$$ When $b^+(M)=1$, it is shown in \cite{LZ} that $\mathcal K_J^c$ is equal to the tame cone $\mathcal K_J^t=\{[\omega]\in H^2(M; \mathbb R)| \omega \mbox{  tames }J\}$. 

To state our results, we define the positive cone $\mathcal P=\{e\in H^2(M;\mathbb R)|e\cdot e >0\}$,  the positive dual of $A_J(M)$ (resp. $\overline{A}_J(M)$), $A_J^{\vee, >0}(M)$ (resp. $\overline{A}_J^{\vee, >0}(M)$), where the duality is taken within $H_J^+(M)$, and $\mathcal P_J=A_J^{\vee, >0}(M) \cap \mathcal P$. Clearly,  $\mathcal K_J^c\subset \mathcal P_J$. We would like to know whether $\mathcal K_J^c=\mathcal K_J^t= \mathcal P_J=\overline{A}_J^{\vee, >0}(M)$ when $b^+=1$ and whether $\mathcal K_J^c$ is a connected component of $\mathcal P_J$ when $b^+>1$ (Question 1.4 in \cite{p=h}). 

\begin{theorem}\label{rr}
If $J$ is almost K\"ahler on $M=S^2\times S^2$ or $\mathbb CP^2\# k\overline{\mathbb CP^2}$ with $k\le 9$, then $$\mathcal K_J^c=\mathcal K_J^t= \mathcal P_J=\overline{A}_J^{\vee, >0}(M).$$
\end{theorem}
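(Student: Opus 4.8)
The plan is to prove the fourfold equality as a short cycle of inclusions, isolating the one analytically hard step. Because $M$ has $b^+=1$, the identity $\mathcal{K}_J^c=\mathcal{K}_J^t$ is immediate from \cite{LZ}, so it suffices to compare the tame cone with the two curve-cone duals. For the easy inclusions I would note that if $\omega$ tames $J$ then $[\omega]\cdot[C]=\int_C\omega>0$ for every irreducible $J$-holomorphic curve $C$, while $[\omega]^2>0$ since $\omega^2$ is a volume form; hence $[\omega]\in A_J^{\vee,>0}(M)\cap\mathcal{P}=\mathcal{P}_J$, giving $\mathcal{K}_J^t\subseteq\mathcal{P}_J$. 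Upgrading this to strict positivity against the closure $\overline{A}_J(M)$, which is what $\mathcal{K}_J^t\subseteq\overline{A}_J^{\vee,>0}(M)$ requires, is where compactness first enters: a sequence of curve classes on which $[\omega]$ tends to zero has uniformly bounded $\omega$-energy, so Gromov compactness --- or equivalently the finiteness clause of the cone theorem (Theorem \ref{coneintro}) --- forces the limiting class to vanish. Since $A_J(M)\subseteq\overline{A}_J(M)$ we also have $\overline{A}_J^{\vee,>0}(M)\subseteq A_J^{\vee,>0}(M)$, so everything reduces to two claims: $\overline{A}_J^{\vee,>0}(M)\subseteq\mathcal{P}$, and the reverse inclusion $\mathcal{P}_J\subseteq\mathcal{K}_J^c$.

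For $\overline{A}_J^{\vee,>0}(M)\subseteq\mathcal{P}$ I would work with the intersection form on $H_J^+(M)=H^2(M;\mathbb{R})$, which has Lorentzian signature $(1,b^-)$, and use the reverse Cauchy--Schwarz inequality of the forward cone. The feature special to $S^2\times S^2$ and $\mathbb{CP}^2\#k\overline{\mathbb{CP}^2}$ with $k\le9$ is that $K_J^2=K^2\ge0$, which places the canonical ray in the closed forward cone and makes $\overline{A}_J(M)$ large enough that its strict dual cannot leave $\mathcal{P}$: any $e$ with $e^2\le0$ would pair non-positively with some class of $\overline{A}_J(M)$, contradicting membership in $\overline{A}_J^{\vee,>0}(M)$. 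This is the Kleiman-type half of the duality, and it is precisely the light-cone geometry that the hypothesis $k\le9$ is arranged to keep under control.

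The heart of the argument, and the main obstacle, is the Nakai--Moishezon direction $\mathcal{P}_J\subseteq\mathcal{K}_J^c$: starting from $e$ with $e^2>0$ and $e\cdot C>0$ for all $J$-holomorphic curves $C$, one must exhibit a $J$-compatible symplectic form in the class $e$. Here I would combine the explicit description of the symplectic (hence, for these $b^+=1$ surfaces, compatible K\"ahler) cone due to Li and Liu with Taubes' Seiberg--Witten theory, which turns Seiberg--Witten basic classes into embedded $J$-holomorphic curves and so identifies the walls bounding the tame cone with positivity conditions against the finitely many $(-1)$- and $(-2)$-type curve classes produced by the cone theorem (Theorem \ref{coneintro}). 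The upshot is that the only obstruction to $e\in\mathcal{P}_J$ being tamed would be a $J$-curve $C$ with $e\cdot C\le0$, which the hypothesis forbids; the finiteness of extremal rays beyond $K_J+\epsilon[\omega]$ in Theorem \ref{coneintro} reduces this to a finite check on each manifold. This existence step is exactly why the statement is confined to $S^2\times S^2$ and to $\mathbb{CP}^2\#k\overline{\mathbb{CP}^2}$ with $k\le9$, since it leans on the complete classification of the symplectic cone that is available only in this range. Assembling the cycle $\mathcal{K}_J^c=\mathcal{K}_J^t\subseteq\mathcal{P}_J\subseteq\mathcal{K}_J^c$ and sandwiching $\overline{A}_J^{\vee,>0}(M)$ between $\mathcal{K}_J^t$ and $\mathcal{P}_J$ then yields all four equalities.
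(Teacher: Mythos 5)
The paper itself does not prove Theorem \ref{rr}; it is quoted from \cite{p=h}, so the comparison below is with the argument of that source as the surrounding discussion describes it. Your overall architecture --- the cycle $\mathcal K_J^c=\mathcal K_J^t\subseteq\overline{A}_J^{\vee,>0}(M)\subseteq\mathcal P_J\subseteq\mathcal K_J^c$, with $\mathcal K_J^c=\mathcal K_J^t$ imported from \cite{LZ} for $b^+=1$ and the easy positivity of a taming class against irreducible curves --- is the right skeleton, and your identification of $k\le 9$ with $K^2=9-k\ge 0$ is on target. But the two steps that carry the actual content of the theorem are asserted rather than proved.

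First, for $\overline{A}_J^{\vee,>0}(M)\subseteq\mathcal P$, ``light-cone geometry'' plus $K^2\ge 0$ is not enough by itself: if $J$ happened to admit very few holomorphic curves, $\overline{A}_J(M)$ would be small and its strict dual would spill far outside $\mathcal P$. What makes the curve cone ``large enough'' is an existence statement --- via Taubes' SW$\Rightarrow$Gr and the Li--Liu wall-crossing formula on $b^+=1$ manifolds, every class in the forward positive cone is represented by a $J$-holomorphic subvariety for \emph{every} tamed $J$, and it is precisely here that $K^2\ge 0$ is used to rule out the alternative $K-e$ in the wall-crossing dichotomy. You invoke Seiberg--Witten theory only in your final paragraph, so this inclusion is left without its essential input. (Your intermediate step $\mathcal K_J^t\subseteq\overline{A}_J^{\vee,>0}(M)$ is also shakier than you suggest; the clean route is that $\mathcal K_J^t$ is open and lies in the nonnegative dual of a closed cone containing no line, hence in the interior of the dual, which pairs strictly positively with $\overline{A}_J(M)\setminus\{0\}$.)

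Second, and more seriously, the Nakai--Moishezon direction $\mathcal P_J\subseteq\mathcal K_J^c$ is the heart of the theorem and your sketch conflates two different cones. The Li--Liu description of the symplectic cone tells you that a class $e$ with $e^2>0$ pairing positively with the exceptional classes is represented by \emph{some} symplectic form on $M$; the theorem requires a form in class $e$ compatible with the \emph{given, non-generic} $J$. Bridging that gap is done in \cite{p=h} and \cite{LZ-generic} by $J$-inflation: one deforms a given $J$-compatible form by inflating along the $J$-holomorphic subvarieties supplied by SW theory, which requires handling reducible representatives and components of negative self-intersection, and is exactly why the hypothesis is ``almost K\"ahler'' rather than merely ``tamed'' (one needs a compatible form to start the inflation). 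Reducing the problem to ``a finite check against walls'' skips the entire construction; as stated, your argument proves membership in the symplectic cone of $M$, not in $\mathcal K_J^c$.
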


It is also true that $\mathcal K_J^t=\mathcal K_J^c= \mathcal P_J$ for a generic tamed $J$ when $b^+=1$. This result could be used to guarantee the correctness of many applications of tamed $J$-inflation \cite{Mc2, Bus} under the recent weakening of this technique (see {\it e.g.} \cite{ALLP}).

To the author, the statements of Mori cone theorem and Nakai-Moishezon or Kleiman duality should not only work in dimension $4$, although certainly new techniques are needed in higher dimensions.

\subsection{Spherical classes on almost complex $4$-manifolds}\label{sphere}
The study of minimal $2$-spheres in compact Riemannian manifolds is a very important topic. Sacks-Uhlenbeck's existence of minimal immersions of $2$-spheres was a remarkable milestone \cite{SU}. In algebraic geometry, the study of holomorphic spheres is ubiquitous. This always leads to the most geometric and successful part of the stories. For symplectic manifolds, particularly in dimension $4$, spheres also play an important role. For instance, when a closed symplectic $4$-manifold $M$ contains an embedded symplectic spheres of nonnegative self-intersection, then it is symplectomorphic to a blowup of either $\mathbb CP^2$ or a sphere bundle over surface \cite{McD90}.  The same conclusion also follows from the existence of a smoothly embedded sphere with nonnegative self-intersection and infinite order in $H_2(M, \mathbb Z)$ \cite{Lpams}.

In the almost complex category, as other results in this paper, we would like to state results that work for an arbitrary (tamed) almost complex structure, rather than requiring genericity. We would also need the class to be Gromov-Witten stable in most situations, {\it i.e.} there are always pseudoholomorphic subvarieties in this given homology class. 

For an almost complex structure $J$, we define the $J$-genus (or virtual genus) of $e\in H^2(M, \mathbb Z)$ as $g_J(e):=\frac{1}{2}(e\cdot e+K_J\cdot e)+1$. A $K_J$-spherical class is a class $e$ which could be represented by a smoothly embedded sphere and $g_J(e)=0$. A spherical class is Gromov-Witten stable only when $e^2=-2-K_J\cdot e\ge -1$. We call a $K_J$-spherical class $E$ with $E^2=K_J\cdot E=-1$ an exceptional curve class.

\begin{theorem}\label{1E}
Let $M$ be a symplectic $4$-manifold which is not diffeomorphic to $\mathbb CP^2\#k\overline{\mathbb CP^2}, k\ge 1$. Then for any tamed $J$, there is a unique $J$-holomorphic subvariety in any exceptional curve class $E$, whose irreducible components are smooth rational curves of negative self-intersection. Moreover, this $J$-holomorphic subvariety is an exceptional curve of the first kind.
\end{theorem}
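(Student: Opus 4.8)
The plan is to obtain existence from Taubes' Seiberg--Witten to Gromov correspondence and to extract the structure and uniqueness from the intersection theory and the key lemma behind the cone theorem. First I would record the numerology: since $E\cdot E=K_J\cdot E=-1$, the virtual dimension of the moduli space of $J$-holomorphic curves in class $E$ is $d(E)=\tfrac12(E\cdot E-K_J\cdot E)=0$, so the Gromov--Taubes invariant $Gr(E)$ is a finite count, and the $J$-genus is $g_J(E)=\tfrac12(E\cdot E+K_J\cdot E)+1=0$. Using $SW=Gr$ \cite{Tsd} together with the wall-crossing analysis of exceptional classes, one has $Gr(E)=1$ exactly because $M$ is neither rational nor ruled: the hypothesis that $M$ is not diffeomorphic to $\mathbb CP^2\#k\overline{\mathbb CP^2}$ removes the rational surfaces, while $\mathbb CP^2$ and $S^2\times S^2$ carry no class at all with $E\cdot E=K_J\cdot E=-1$, and minimal irrational ruled surfaces carry no exceptional class. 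As $Gr(E)$ is independent of the tamed $J$, this produces, for every tamed $J$, a $J$-holomorphic subvariety $\Theta$ in class $E$; and since $g_J(E)=0$ the invariant counts connected genus-zero configurations, so Gromov compactness lets us take $\Theta$ connected.

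Next I would pin down the components of a representative $\Theta=\sum_i m_iC_i$. Because $K_J\cdot E=\sum_i m_i(K_J\cdot C_i)=-1<0$, at least one component has $K_J\cdot C_i<0$, and the key lemma recalled after Theorem~\ref{coneintro} (Lemma 2.1 of \cite{p=h}) forces such a $C_i$ to be a smooth rational curve with $C_i^2=-1$. To control all components at once I would use the adjunction formula in the form $g_J(C_i)=g(\Sigma_i)+\delta_i\ge 0$, where $\Sigma_i$ is the normalizing surface and $\delta_i\ge 0$ the singularity contribution, together with the positivity of intersection of distinct irreducible curves (Theorem~\ref{MicW}). Comparing $g_J(E)=0$ with the additivity of the arithmetic genus over the connected configuration $\Theta$ expresses $0$ as a sum of the nonnegative numbers $g_J(C_i)$ and of nonnegative excess-intersection contributions over the number of independent cycles of the dual graph; vanishing of each summand forces every $C_i$ to be a smooth rational curve and the dual graph to be a tree. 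The presence of multiplicities $m_i>1$ requires handling this bookkeeping on the reduced configuration and then propagating it, which is a delicate but manageable point.

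Having shown that $\Theta$ is a connected tree of smooth rational curves in the class $E$ with $E\cdot E=K_J\cdot E=-1$, I would identify it as an exceptional curve of the first kind. One route is to blow down the $-1$-sphere components successively (McDuff's theorem \cite{McD90} lets one blow down a tamed $J$-holomorphic $-1$-sphere, and a blow-down of a non-rational-ruled manifold stays non-rational-ruled) and to invoke the classification of the resulting configurations in \cite{LM, Sha}, exactly as in Theorem~\ref{introphblowdown}; alternatively one reproves Grauert's criterion by the symplectic capping argument of Theorem~\ref{introphblowdown}, capping a $J_M$-convex ball and applying McCarthy--Wolfson gluing \cite{McW}, which shows the intersection matrix $(C_i\cdot C_j)$ is negative definite. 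Either way the components have negative self-intersection and $\Theta$ contracts to a smooth point, i.e.\ it is an exceptional curve of the first kind.

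The main obstacle is uniqueness, where I expect to spend the most care. Suppose $\Theta_1\ne\Theta_2$ both represent $E$; expanding $E\cdot E=[\Theta_1]\cdot[\Theta_2]=\sum_k a_kb_k\,D_k^2+\sum_{k\ne l}a_kb_l\,D_k\cdot D_l$ over the distinct irreducible curves $D_k$ (with multiplicities $a_k,b_k$), the off-diagonal terms are nonnegative by Theorem~\ref{MicW}, so the negativity $E\cdot E=-1$ forces $\Theta_1$ and $\Theta_2$ to share a component of negative self-intersection. Peeling off the common sub-configuration $\Phi$ and writing $\Theta_1=\Phi+A$, $\Theta_2=\Phi+B$ with $A,B$ sharing no component, positivity gives $[A]\cdot[B]=[A]^2\ge 0$, and I would combine this with the negative definiteness of the exceptional configuration to conclude $A=B=\emptyset$. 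Making this peeling terminate in the presence of components of multiplicity $>1$ is the subtle step; the cleanest packaging, which I would use to close the argument, is that $Gr(E)=1$ equals the sum of the Taubes weights of the $J$-holomorphic subvarieties in class $E$, and for these trees of rational curves the weights are positive integers, so there can be only one such subvariety. Establishing the positivity of these weights (equivalently, the termination of the intersection-theoretic peeling) is where the real work lies, and it is exactly the point at which the excluded rational surfaces would otherwise fail.
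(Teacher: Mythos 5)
Your outline (Seiberg--Witten--Taubes existence, adjunction plus positivity of intersections for the structure of a representative, an intersection-theoretic peeling for uniqueness) matches the general shape of the argument, but the two steps you flag as ``delicate but manageable'' are precisely where the proof lives, and as written both have genuine gaps. The structural step is the main one: the genus additivity $g_J(E)\ge \sum_i g_J(e_{C_i})$ you invoke is Theorem \ref{genus bound}, whose hypothesis is that the class is $J$-nef --- and $E$ is never $J$-nef, since $E\cdot E=-1$ means $E$ pairs negatively with its own representative. Unwinding the adjunction bookkeeping directly, a component with $e_{C_i}^2<0$ and multiplicity $m_i>1$ contributes the negative quantity $\tfrac12(m_i^2-m_i)\,e_{C_i}^2$ to $g_J(E)-\sum_i g_J(e_{C_i})$, so the inequality can fail; the rational surfaces excluded from the statement furnish actual counterexamples (representatives of exceptional classes with higher-genus components and positive-dimensional moduli, as noted immediately after Theorem \ref{1E}). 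So the hypothesis that $M$ is not rational must enter the structure argument itself, not merely the existence count, and your proposal does not say how. In \cite{Zmod} this is handled for irrational ruled surfaces by playing the exceptional class against the nef fiber class; the non-ruled case requires the different techniques of the appendix of \cite{CZ2}.

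The uniqueness fallback also does not close: $Gr(E)=1$ is a weighted count and is perfectly consistent with a positive-dimensional family of subvarieties in class $E$ (again, the rational examples show this actually happens), so ``the weights are positive integers, hence there is only one subvariety'' is not a valid inference. Your peeling argument is closer to the real mechanism, but it presupposes negative definiteness of each configuration, which you propose to obtain either from the capping argument of Theorem \ref{introphblowdown} --- not available here, since there is no blow-down map providing a $J_M$-convex ball downstairs and $J$ is only assumed tamed, not almost K\"ahler --- or from the structure result whose proof is the gap above. The paper's uniqueness instead rests on a generalization of Gromov's positivity of intersections to subvarieties under a nefness assumption, which bounds how often two distinct subvarieties in the same class can meet each irreducible component (Lemma 2.5 of \cite{Zmod}, Lemma 4.18 of \cite{LZ-generic}), combined with the Seiberg--Witten wall-crossing input of \cite{LLwall} and the $J$-nef machinery of \cite{LZrc}.
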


The existence of $J$-holomorphic subvarieties follows directly from Seiberg-Witten-Taubes theory. Hence, the key statement  of this result is the uniqueness. When $M=\mathbb CP^2\#k\overline{\mathbb CP^2}, k\ge 1$, a $J$-holomorphic subvariety in an exceptional curve class need not to be an exceptional curve of the first kind. Moreover, there are exceptional curve classes such that the moduli space of $J$-holomorphic subvarieties in such a class $E$ is of positive dimension and some representatives have higher genus components when $k\ge 8$ as noticed in \cite{p=h, Zmod}. Theorem \ref{1E} was first established for irrational ruled surfaces, {\it i.e.} smooth $4$-manifolds diffeomorphic to blowups of sphere bundles over Riemann surfaces with positive genus, in \cite{Zmod}. The non-ruled situation is  resolved in the appendix of \cite{CZ2} using different techniques. 

If the spherical class has nonnegative self-intersection, the manifold $M$ must be a rational or ruled surface. Moreover,  the only spherical class with nonnegative self-intersection on irrational ruled surfaces is the positive fiber class. However, {\it a priori}, we do not know how the subvarieties in the fiber class behave. For example, whether do we have the exotic behaviour appeared in the rational surfaces situation, and do we always have smooth subvarieties in this class for any tamed almost complex structure? These could be understood very well in the ruled surfaces case and we derive the following structural results for an arbitrary tamed almost complex structure. 

\begin{theorem}\label{intro2}
Let $M$ be an irrational ruled surface of base genus $h\ge 1$. Then for any tamed $J$ on $M$, 
\begin{enumerate}
\item there is a unique subvariety in the positive fiber class $T$ passing through any given point;
\item the moduli space $\mathcal M_T$ is homeomorphic to $\Sigma_h$, and there are finitely many reducible varieties;
\item every irreducible rational curve is an irreducible component of a subvariety in class $T$.
\end{enumerate}
\end{theorem}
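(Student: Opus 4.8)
\emph{Setup and numerics.} Since the fiber is an embedded sphere, the genus formula $g_J(e)=\frac12(e\cdot e+K_J\cdot e)+1$ with $g_J(T)=0$ and $T\cdot T=0$ forces $K_J\cdot T=-2$, so $c_1\cdot T=2>0$ and $T$ is Gromov--Witten stable ($T^2=0\ge-1$). I would first record two consequences. By automatic transversality for embedded $J$-holomorphic spheres of positive Chern number in a $4$-manifold, the moduli space of \emph{irreducible} members of $|T|$ is a smooth surface on which the evaluation map from the universal curve is a local diffeomorphism. By positivity of intersection (Theorem \ref{MicW}) and $T\cdot T=0$, any two \emph{distinct} irreducible $J$-holomorphic curves in class $T$ are disjoint.

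\emph{Existence through every point.} I would invoke Seiberg--Witten--Taubes theory, as for Theorem \ref{1E}: the point-constrained Gromov invariant of the fiber class equals $1$ (it is a smooth invariant, computed on any integrable ruled model, where a single fiber meets each point). Because all local intersection indices are positive, this nonvanishing count is an honest count, and together with Gromov compactness (the area $[\omega]\cdot T$ is fixed) it shows the evaluation map $\mathcal M_T\text{-universal}\to M$ is surjective. Hence a class-$T$ subvariety passes through every $p\in M$.

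\emph{Uniqueness and internal structure.} Let $\Theta$ be any member of $|T|$. Each component $C$ has $C\cdot T\ge0$ by positivity; combined with $T^2=0$, the cone-theorem lemma (Lemma 2.1 of \cite{p=h}, forcing a curve with $K_J\cdot C<0$ to be a $-1$-sphere), and adjunction, one sees every component is a smooth rational curve and that $\Theta$ is either an embedded fiber sphere or a connected tree of rational curves containing an exceptional curve class $E$ with $E\cdot T=0$. If $\Theta_1\ne\Theta_2$ both passed through $p$, then $\Theta_1\cdot\Theta_2=T^2=0$ together with positivity would force a shared component, necessarily such an $E$; peeling $E$ off leaves complementary pieces in exceptional curve classes, which are rigid by Theorem \ref{1E}, and iterating yields $\Theta_1=\Theta_2$. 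This proves (1). The reducible members biject with the finitely many exceptional classes $E$ satisfying $E\cdot T=0$ and $[\omega]\cdot E<[\omega]\cdot T$ (a finite set by Gromov compactness, each with a unique representative by Theorem \ref{1E}), giving the finiteness asserted in (2).

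\emph{Leaf space and rational curves.} By (1) the members of $|T|$ are connected, pairwise disjoint, and cover $M$, so $p\mapsto\Theta_p$ defines a continuous surjection $\pi\colon M\to\mathcal M_T$ whose point-preimages are exactly these subvarieties; collapsing the fibers of the ruling of an irrational ruled surface recovers the base, identifying $\mathcal M_T\cong\Sigma_h$ and proving (2). For (3), let $R$ be an irreducible rational curve. If $R\cdot T>0$, then $\pi$ restricted to the normalization $\widetilde R\cong\mathbb{CP}^1$ has degree $R\cdot T>0$ onto $\Sigma_h$; but $\pi_2(\Sigma_h)=0$ for $h\ge1$, so every map $\mathbb{CP}^1\to\Sigma_h$ is null-homotopic of degree $0$, a contradiction. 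Hence $R\cdot T=0$, and for $p\in R$ we have $R\cdot\Theta_p=R\cdot T=0$ while $p\in R\cap\Theta_p$; positivity then forces $R$ to be a component of $\Theta_p$, proving (3). The main obstacle is the uniqueness/structure step of the third paragraph: ruling out higher-genus and exotic reducible members and reducing the reducible case to Theorem \ref{1E}. This is precisely where $h\ge1$ is essential (the analogue fails on rational surfaces), and where the cone theorem, adjunction, and the uniqueness in exceptional curve classes must be combined most carefully.
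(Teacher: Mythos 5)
Your overall outline (Seiberg--Witten existence through every point, disjointness of distinct members from $T^2=0$ plus positivity, the leaf-space identification $\mathcal M_T\cong\Sigma_h$, and the $\pi_2(\Sigma_h)=0$ argument for part (3)) matches the shape of the paper's ``geometric argument,'' and you correctly identify the uniqueness/structure step as the crux. But that is exactly where your proposal has a genuine gap, in two places. First, you assert that each component $C$ of a class-$T$ subvariety satisfies $C\cdot T\ge 0$ ``by positivity.'' Positivity of intersections only controls $C\cdot C'$ for \emph{distinct} irreducible curves; $C\cdot T$ contains the term $m_C C^2$, which can be arbitrarily negative, so this inequality is really the statement that $T$ is $J$-nef. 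The paper makes establishing $J$-nefness of $T$ (via Seiberg--Witten theory and the $b^+=1$ wall-crossing formula) the explicit first step, precisely so that Theorem \ref{emb-comp} can then be invoked to get the smooth-rational-tree structure; your adjunction-plus-cone-lemma substitute does not get you there without it.

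Second, your uniqueness mechanism does not work as stated. If $\Theta_1\ne\Theta_2$ pass through $p$, positivity and $T^2=0$ do force a shared component, but there is no reason that component is an exceptional curve class, nor that ``peeling it off'' leaves pieces in exceptional curve classes: after iterated blowups along a fiber, reducible members of $|T|$ contain smooth rational components of self-intersection $\le -2$, to which Theorem \ref{1E} says nothing, and the residual classes $T-m[C]$ are not exceptional classes in general. For the same reason your claimed bijection between reducible members and exceptional classes $E$ with $E\cdot T=0$ is unjustified. The paper closes this gap with a different key lemma: a generalization of Gromov's positivity of intersection to pairs of (possibly component-sharing) pseudoholomorphic \emph{subvarieties} in a nef class, which bounds how much two distinct subvarieties in the same class can meet along each irreducible component (Lemma 2.5 of \cite{Zmod}, Lemma 4.18 of \cite{LZ-generic}). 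Some such statement, or a genuinely new argument handling non-exceptional shared components, is needed before your paragraph three can be accepted.
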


The above results could be understood as describing linear systems for spherical classes in the almost complex setting. It is remarkable that even if the statement of Theorem \ref{intro2} is purely $4$-dimensional, it has been used in a substantial way to study symplectic Fano $6$-manifolds \cite{LP} where no genericity of almost complex structures can be assumed on the symplectic reduction.  

Besides Seiberg-Witten-Taubes theory and in particular the wall-crossing formula for manifolds with $b^+=1$ \cite{LLwall}, we use the $J$-nef technique developed in \cite{LZrc} to prove Theorems \ref{1E} and \ref{intro2}. We call a class $e\in H^2(M, \mathbb Z)$ $J$-nef if it pairs non-negatively with any $J$-holomorphic subvariety. The exotic phenomenon of the existence of higher genus irreducible components (and higher dimensional moduli) of a spherical class, as in the rational counterexamples of Theorem \ref{1E}, cannot happen for $J$-nef (spherical) classes. 

\begin{theorem}\label{genus bound}
Suppose   $e$ is a $J$-nef class with $g_J(e)\geq 0$. Then  $$g_J(e)\ge t(\Theta):=\sum_i g_J(e_{C_i})$$ for any connected subvariety $\Theta=\{(C_i,m_i)\}$ in the class $e$.  
\end{theorem}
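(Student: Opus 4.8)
The plan is to convert the statement into a single numerical inequality among the intersection numbers and virtual genera of the components $C_i$, and then discharge it using the three positivity inputs at our disposal — positivity of intersection, the adjunction inequality, and $J$-nefness — keeping the hypothesis $g_J(e)\ge 0$ in reserve for the delicate configurations. Write $e_i:=e_{C_i}$, $a_i:=g_J(e_i)$, $s_i:=e_i\cdot e_i$, $p_{ij}:=e_i\cdot e_j$ for $i\ne j$, and $M:=\sum_i m_i$. Iterating the bilinear identity $g_J(\alpha+\beta)-1=(g_J(\alpha)-1)+(g_J(\beta)-1)+\alpha\cdot\beta$ on $e=\sum_i m_ie_i$ gives
\[
g_J(e)=1+\sum_i m_i(a_i-1)+\tfrac12\sum_i m_i(m_i-1)s_i+\sum_{i<j}m_im_jp_{ij},
\]
so the assertion $g_J(e)\ge\sum_i a_i$ is equivalent to $1-M+\sum_i(m_i-1)a_i+Q\ge0$, where $Q$ collects the last two sums. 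First I would make $J$-nefness visible by setting $\nu_i:=e\cdot e_i\ge0$ (nonnegative because $e$ is $J$-nef and each $C_i$ is a $J$-holomorphic curve) and substituting $m_is_i=\nu_i-\sum_{j\ne i}m_jp_{ij}$, which rewrites
\[
Q=\tfrac12\sum_i(m_i-1)\nu_i+\tfrac12\sum_{i<j}(m_i+m_j)p_{ij},
\]
a form in which every summand is manifestly nonnegative.

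Next I would bring in the geometry. Positivity of intersection (Theorem \ref{MicW}) gives $p_{ij}\in\mathbb Z_{\ge0}$, with $p_{ij}\ge1$ whenever $C_i$ meets $C_j$; the adjunction inequality for irreducible pseudoholomorphic curves gives $a_i\in\mathbb Z_{\ge0}$; and connectedness of $\Theta$ makes its dual graph connected, so it carries a spanning tree with $n-1$ edges of weight $\ge1$. Writing $\rho_i:=\sum_{j\ne i}p_{ij}\ge1$ and using the identity $\tfrac12\sum_{i<j}(m_i+m_j)p_{ij}=\tfrac12\sum_i m_i\rho_i$ together with the spanning-tree bound $\tfrac12\sum_i m_i\rho_i\ge (n-1)+\tfrac12\sum_i(m_i-1)\rho_i$, the inequality $1-M+(n-1)=-\sum_i(m_i-1)$ collapses the target to
\[
\sum_i(m_i-1)\Big(a_i-1+\tfrac12\nu_i+\tfrac12\rho_i\Big)\ge0.
\]
Since $a_i,\nu_i\in\mathbb Z_{\ge0}$ and $\rho_i\ge1$, integrality forces each bracket with $m_i\ge2$ to be nonnegative \emph{unless} $a_i=0$, $\rho_i=1$ and $\nu_i=0$; that is, the only obstruction comes from a rational component of multiplicity $\ge2$ which meets the rest of $\Theta$ in a single point and is $e$-orthogonal.

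The hard part is exactly this last family. Such a component satisfies $\nu_i=m_is_i+\sum_{j\ne i}m_jp_{ij}=0$ with $s_i<0$, so it is a negative rational (typically a $(-1)$-) curve, and the crude spanning-tree estimate has discarded too much. Here I would proceed in two complementary ways. On one hand, $J$-nefness forces excess intersection on its neighbours: the relation $\sum_{j\ne i}m_jp_{ij}=m_i\lvert s_i\rvert$ makes the neighbouring multiplicities large, and retaining (rather than discarding) this weight supplies the missing slack. On the other hand, one checks on these local configurations that when the nef slack is still insufficient the class already has $g_J(e)<0$, and is therefore excluded by the standing hypothesis $g_J(e)\ge0$ — which is precisely where that hypothesis is indispensable. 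I expect the cleanest implementation to be an induction that peels off one such component (legitimate because deleting a leaf of the dual graph preserves connectedness) while tracking the change $g_J(e)-g_J(e-m_ie_i)$ through the bilinear identity. The principal obstacle is that \emph{neither} $J$-nefness \emph{nor} the bound $g_J\ge0$ is inherited by the smaller configuration, so the negative-rational case cannot simply be fed back into the induction and must instead be dispatched by a direct local analysis that couples the nef-forced excess intersections to the global constraint $g_J(e)\ge0$.
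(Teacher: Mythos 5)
Your global computation is set up correctly: the bilinear identity for $g_J$, the rewriting of $Q$ via $\nu_i=e\cdot e_i\ge 0$, the spanning-tree count $\sum_{i<j}p_{ij}\ge n-1$ from connectedness and positivity of intersections, and the resulting reduction to $\sum_i(m_i-1)\bigl(a_i-1+\tfrac12\nu_i+\tfrac12\rho_i\bigr)\ge 0$ all check out, and they correctly isolate the only problematic components: rational leaves with $m_i\ge 2$, $\rho_i=1$ and $e\cdot e_i=0$. But at exactly that point the argument stops being a proof. You offer two strategies for the residual case --- that the nef-forced relation $\sum_{j\ne i}m_jp_{ij}=m_i|s_i|$ ``supplies the missing slack,'' and that the remaining configurations ``already have $g_J(e)<0$'' --- and neither is carried out; the first is a heuristic about discarded weight, the second is an unverified claim about an unspecified family of local configurations, and you yourself note that the proposed leaf-peeling induction fails because neither $J$-nefness nor $g_J(e)\ge 0$ passes to the truncated configuration. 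This is a genuine gap, not a routine verification: one can write down configurations (e.g.\ a multiplicity-$m$ chain of two $-1$-rational components, or a multiplicity-$2$ central component with $k$ such leaves and $s_0=-k+1$) in which the inequality holds only with equality, so whatever argument closes this case must be exactly tight and cannot be waved through with ``slack.''

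For comparison, the proof in \cite{LZrc} that this survey points to does not run a single global inequality at all: it defines combinatorial moves on the weighted dual graph of $\Theta$ (merging or contracting components) and shows that the total genus $t(\Theta)$ and the relevant dimension counts change monotonically under each move, terminating at configurations where the bound is immediate. That reduction scheme is precisely engineered to absorb the multiplicity-$\ge 2$ rational leaves that defeat your termwise estimate, because each move re-balances the contribution of a leaf against its neighbour rather than bounding them separately. If you want to salvage your approach, the missing piece is a lemma of that type: a local exchange argument at each bad leaf $C_i$ that transfers the deficit $-\tfrac12(m_i-1)$ onto the unique neighbour $C_{j_0}$ (using $m_{j_0}p_{ij_0}=m_i|s_i|\ge m_i$) and shows the neighbour's bracket strictly exceeds what you credited it, with the case $g_J(e)<0$ ruled out at the end rather than assumed away.
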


When the  $J$-genus is zero, we have the following more precise description.

\begin{theorem} \label{emb-comp}  Suppose  $e$ is a  $J$-nef class with $g_J(e)=0$. 
Let   $\Theta=\{(C_i,m_i)\}$ be a $J$-holomorphic subvariety in the class $e$. 
\begin{itemize}
\item 
If $\Theta$ is connected, then   each  irreducible component  of $\Theta$  is a smooth rational curve, and $\Theta$ is a tree configuration.
\item
If $J$ is tamed, then $\Theta$ is connected. 

\item Let $l_e:=\max\{e\cdot e+1, 0\}$. If $\Theta$ is connected and contains at least two irreducible components, then 
\begin{equation*}\label{red-dim'}
\sum_{i=1}^n m_i l_{e_{C_i}}\leq l_e-1.
\end{equation*}
\end{itemize}
\end{theorem}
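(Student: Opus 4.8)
The plan is to read all four assertions off the genus formula for $g_J(\cdot)$ together with the genus bound of Theorem \ref{genus bound}, the positivity of intersection for distinct pseudoholomorphic curves (Theorem \ref{MicW}), and the defining $J$-nef inequality $e\cdot e_{C_i}\ge 0$. First I would dispose of the claim that every component is a smooth rational curve, which is immediate: Theorem \ref{genus bound} gives $0=g_J(e)\ge\sum_i g_J(e_{C_i})$, while the adjunction (genus) inequality for an irreducible pseudoholomorphic curve writes $g_J(e_{C_i})=g(\widetilde C_i)+\delta_i$ as the genus of the normalization plus a non-negative contribution from the singular points, so each $g_J(e_{C_i})\ge 0$. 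Hence every $g_J(e_{C_i})=0$, forcing $g(\widetilde C_i)=0$ and $\delta_i=0$, i.e. $C_i$ is smooth rational. From here on I record the identity $K_J\cdot e_{C_i}=-2-e_{C_i}\cdot e_{C_i}$.

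The cleanest of the three bullets is the last, and I would prove it by a componentwise estimate. Since $e$ is $J$-nef and $e=\sum_i m_i e_{C_i}$ with all $m_i\ge 1$, positivity gives $e\cdot e=\sum_i m_i\,(e\cdot e_{C_i})\ge 0$, so $l_e=e\cdot e+1$ and $l_e-1=e\cdot e$. It then suffices to prove the pointwise bound $l_{e_{C_i}}\le e\cdot e_{C_i}$ for every component; summing against $m_i$ gives $\sum_i m_i l_{e_{C_i}}\le\sum_i m_i(e\cdot e_{C_i})=e\cdot e=l_e-1$. If $e_{C_i}\cdot e_{C_i}\le -1$ then $l_{e_{C_i}}=0\le e\cdot e_{C_i}$ by $J$-nefness. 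If $e_{C_i}\cdot e_{C_i}\ge 0$ then $l_{e_{C_i}}=e_{C_i}\cdot e_{C_i}+1$; because $\Theta$ is connected with at least two components, $C_i$ meets some other component $C_j$, so positivity of intersection (Theorem \ref{MicW}) gives $e_{C_i}\cdot e_{C_j}\ge 1$, whence $e\cdot e_{C_i}=m_i\,e_{C_i}\cdot e_{C_i}+\sum_{j\neq i}m_j\,e_{C_i}\cdot e_{C_j}\ge e_{C_i}\cdot e_{C_i}+1=l_{e_{C_i}}$, using $m_i\ge 1$ and $e_{C_i}\cdot e_{C_i}\ge 0$. This settles the third bullet.

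For connectedness under the tameness hypothesis I would argue by contradiction. If $\Theta$ splits into connected pieces $\Theta_1,\dots,\Theta_k$ with $k\ge 2$ and classes $e_\alpha$, their supports are disjoint, so $e_\alpha\cdot e_\beta=0$ for $\alpha\neq\beta$. The $J$-nef inequality applied to each piece gives $e\cdot e_\alpha=e_\alpha\cdot e_\alpha\ge 0$, and if $\omega$ tames $J$ then $\omega\cdot e_\alpha>0$ for every $\alpha$. Thus the $e_\alpha$ are pairwise orthogonal classes of non-negative square all lying on one side of $\omega$. When $b^+=1$ the Hodge index theorem (light cone lemma) makes this impossible for $k\ge 2$: either some $e_\alpha\cdot e_\alpha>0$, forcing the remaining ones to have negative square, or all are null and hence proportional, and then the additivity $g_J(e)=\sum_\alpha g_J(e_\alpha)-(k-1)$ together with $g_J(e)=0$ pins $e$ to a primitive class that cannot split into $k\ge 2$ effective summands. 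This covers the rational and ruled cases of interest; the general tame case would require feeding the same additivity into the constraints coming from Seiberg--Witten--Taubes theory.

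The remaining point, that a connected $\Theta$ is a tree, is where I expect the real work. Writing $e_{\mathrm{red}}=\sum_i e_{C_i}$ for the reduced class and $G$ for the dual graph of $D=\bigcup_i C_i$, the genus formula gives $g_J(e_{\mathrm{red}})=\sum_{i<j}e_{C_i}\cdot e_{C_j}-n+1=b_1(G)\ge 0$, so the tree condition is exactly $g_J(e_{\mathrm{red}})=0$. Comparing the thickened and reduced classes through $f=e-e_{\mathrm{red}}=\sum_i(m_i-1)e_{C_i}$ yields the identity $g_J(e)-g_J(e_{\mathrm{red}})=\tfrac12\sum_i(m_i-1)\bigl(e\cdot e_{C_i}+v_i-2\bigr)$, where $v_i=\sum_{j\neq i}e_{C_i}\cdot e_{C_j}\ge 1$ is the valence of $C_i$ in $G$. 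When all $m_i=1$ this is vacuous and $b_1(G)=g_J(e)=0$ at once. The obstacle is the interaction of higher multiplicities with the $J$-nef inequalities: a multiple leaf with $e\cdot e_{C_i}=0$ makes its correction term negative, so one cannot conclude $g_J(e_{\mathrm{red}})\le g_J(e)$ termwise. I would resolve this by induction on $\sum_i m_i$, peeling off such an extremal (leaf or $(-1)$-) component — removing a leaf leaves $b_1(G)$ unchanged — and re-establishing $J$-nefness and $g_J=0$ for the residual class so that Theorem \ref{genus bound} applies again. Controlling this peeling against the full system of nef inequalities is, I expect, the crux of the whole argument.
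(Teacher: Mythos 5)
Your proposal gets two of the four assertions completely: the smooth rationality of each component (Theorem \ref{genus bound} plus the adjunction inequality forces $g_J(e_{C_i})=0$ for every $i$) and the third bullet, where your componentwise estimate $l_{e_{C_i}}\le e\cdot e_{C_i}$ followed by summation against $m_i$ is correct and in fact rather cleaner than one might expect --- it uses only $J$-nefness, connectedness with at least two components, and Micallef--White positivity. For comparison, the paper itself does not reproduce a proof (the theorem is quoted from \cite{LZrc}); it only records that the results are obtained by ``combinatorial moves on the weighted graph underlying the subvarieties,'' showing that quantities such as total genus and the sum of dimensions change monotonically under the moves. Your third-bullet argument is a legitimate shortcut past that machinery for the inequality itself (the machinery is really needed for the classification of the equality case, which is not part of the statement here).

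The two remaining assertions are genuinely not proved in your write-up. For the tree claim you have correctly isolated the obstruction: $g_J(e_{\mathrm{red}})=b_1(G)$ and $g_J(e)-g_J(e_{\mathrm{red}})=\tfrac12\sum_i(m_i-1)\bigl(e\cdot e_{C_i}+v_i-2\bigr)$ are both right, and the problematic terms are exactly the multiple leaves with $e\cdot e_{C_i}=0$ (which do occur, e.g.\ a $(-1)$-leaf whose multiplicity equals that of its unique neighbour). But the peeling induction is only announced, not executed: after removing such a component the residual class need not remain nef on the remaining components (e.g.\ $e'\cdot e_{C_j}=e\cdot e_{C_j}-m_1 d_{1j}$ can go negative at the neighbour), so both hypotheses of Theorem \ref{genus bound} have to be re-established, and this is precisely the bookkeeping that the weighted-graph moves of \cite{LZrc} are designed to control. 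As you say yourself, this is the crux, and it is missing. For connectedness under tameness, your light-cone argument is fine when $b^+=1$ (pairwise orthogonal classes of non-negative square on one side of $\omega$ must be proportional null classes, and the characteristic-vector parity of $K_J$ then rules out $k\ge 2$), but for $b^+>1$ two orthogonal classes of positive square coexist happily and the argument collapses; the deferral to ``Seiberg--Witten--Taubes theory'' is not a proof. So the proposal should be regarded as a complete proof of the first half of bullet one and of bullet three, and an honest but unfinished sketch of the other two claims.
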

\begin{cor} \label{embsphere} Suppose $J$ is a tamed almost complex structure  
and   $e$ is a class   represented by a smooth $J$-holomorphic rational curve. 
Then for  any $J$-holomorphic subvariety $\Theta$ in the class $e$, 
each irreducible component of $\Theta$ is a smooth  rational curve. 
\end{cor}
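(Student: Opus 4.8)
The plan is to reduce everything to Theorem~\ref{emb-comp}, treating the cases $e\cdot e\ge 0$ and $e\cdot e<0$ separately, since they behave quite differently. In both cases the first observation pins down the $J$-genus: if $e$ is represented by a smooth embedded $J$-holomorphic rational curve $C_0$, then the adjunction formula in the almost complex setting gives $g_J(e)=\frac12(e\cdot e+K_J\cdot e)+1=0$, the genus of $C_0$. Thus every subvariety $\Theta$ in class $e$ lives in the regime $g_J(e)=0$ covered by Theorem~\ref{emb-comp}, \emph{provided} we can show $e$ is $J$-nef.

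First I would dispose of the case $e\cdot e\ge 0$, where I claim $e$ is $J$-nef. Indeed, let $\Theta'=\{(D_k,n_k)\}$ be any $J$-holomorphic subvariety; for each component $D_k$ either $D_k\ne C_0$, in which case positivity of intersections (Theorem~\ref{MicW}) gives $[C_0]\cdot[D_k]\ge 0$, or $D_k=C_0$, in which case $[C_0]\cdot[D_k]=e\cdot e\ge 0$. Summing over $k$ yields $e\cdot[\Theta']\ge 0$, so $e$ is $J$-nef. Now I apply Theorem~\ref{emb-comp}: since $J$ is tamed, the second bullet shows any $\Theta$ in class $e$ is connected, and then the first bullet shows each irreducible component of $\Theta$ is a smooth rational curve (and that $\Theta$ is a tree configuration).

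The case $e\cdot e<0$ cannot use $J$-nefness, since $C_0$ itself pairs negatively with $e$, so here I would argue directly and in fact show $\Theta=C_0$. Write $\Theta=\{(C_i,m_i)\}$. If $C_0$ is not among the $C_i$, then $C_0\ne C_i$ for all $i$ and positivity gives $e\cdot[\Theta]=\sum_i m_i\,[C_0]\cdot[C_i]\ge 0$, contradicting $e\cdot[\Theta]=e\cdot e<0$. Hence $C_0=C_j$ for some $j$; writing $\Theta=m_jC_0+\Theta'$ with $\Theta'$ not containing $C_0$, the class identity $(1-m_j)e=[\Theta']$ paired with a taming form $\omega$ gives $(1-m_j)(\omega\cdot e)=\omega\cdot[\Theta']\ge 0$. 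Since $\omega\cdot e>0$ this forces $m_j=1$ and $\omega\cdot[\Theta']=0$; as $\omega\cdot[C_i]>0$ for every component, $\Theta'$ must be empty, so $\Theta=C_0$ is a smooth rational curve.

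The genuinely substantive input is Theorem~\ref{emb-comp}, which already carries the hard work (through the genus bound of Theorem~\ref{genus bound}) in the $e\cdot e\ge 0$ range; within this corollary the only real subtlety, and the point I would be most careful about, is that $J$-nefness \emph{fails} when $e\cdot e<0$, so that range must be handled by the elementary positivity-plus-tameness argument above rather than by quoting Theorem~\ref{emb-comp}. I would also double-check that the adjunction identification $g_J(e)=0$ is legitimate for the given smooth representative, as this is what places us in the hypotheses of Theorem~\ref{emb-comp} throughout.
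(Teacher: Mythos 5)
Your argument is correct and is essentially the intended derivation from Theorem~\ref{emb-comp}: adjunction gives $g_J(e)=0$, positivity of intersections makes $e$ $J$-nef when $e\cdot e\ge 0$, and tameness supplies the connectedness needed for the first bullet. You are also right to flag that $e$ fails to be $J$-nef when $e\cdot e<0$, so that a bare citation of Theorem~\ref{emb-comp} would not suffice there; your positivity-plus-taming-area argument correctly closes this case (and in fact shows $\Theta$ is then the embedded sphere itself with multiplicity one), which is precisely the extra step the statement of the corollary glosses over.
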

The subvarieties with the equality $\sum_{i=1}^n l_{e_{C_i}}\leq l_e-1$ holds are classified in \cite{LZrc}. These results are proved by introducing combinatorial moves on the weighted graph underlying the subvarieties and showing the relevant quantities like total genus and sum of dimensions are changed monotonically when applying these moves. This is a very effective method to treat problems involving algebraic aspect of the adjunction formula. 

The strategy to prove Theorem \ref{intro2} is to first show the positive fiber class is nef for any tamed $J$ by Seiberg-Witten theory. Then Theorem \ref{emb-comp} implies there is at least one embedded $J$-holomorphic curve in class $T$. A geometric argument reduces the moduli space description to uniqueness of curves passing through a given point. This uniqueness statement and that of Theorem \ref{1E} for irrational surfaces follows from a generalization of Gromov's positivity of intersection to general pseudoholomorphic subvarieties, which says that, under nefness assumption, two distinct subvarieties in the same class cannot intersect at too many points on each irreducible component, see Lemma 2.5 in \cite{Zmod} and Lemma 4.18 in \cite{LZ-generic}.

This set of techniques also leads to linear systems on spherical classes with positive self-intersection. 

\begin{theorem}\label{cpl}
Let $J$ be a tamed almost complex structure on a rational surface $M$. Suppose $e$ is a primitive class and represented by a smooth $J$-holomorphic sphere. Then $\mathcal M_e$ is homeomorphic to $\mathbb CP^l$ where $l=\max\{0, e\cdot e+1\}$.
\end{theorem}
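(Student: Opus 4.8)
The plan is to combine the $J$-nef technology of Theorems \ref{genus bound}--\ref{emb-comp} with Seiberg--Witten--Taubes theory, and then to pin down the homeomorphism type through a uniqueness-through-points argument. First I would dispose of the degenerate case: if $e\cdot e<0$, then $g_J(e)=0$ and primitivity force $e\cdot e=K_J\cdot e=-1$, so $e$ is an exceptional curve class, $l=0$, and Theorem \ref{1E} already gives that $\mathcal M_e$ is a single point. So I assume $e\cdot e\ge 0$, hence $l=e\cdot e+1\ge 1$. The first substantive step is to show $e$ is $J$-nef. Writing $e=[C]$ for the given smooth irreducible $J$-holomorphic sphere $C$, I would apply Micallef--White positivity (Theorem \ref{MicW}): for any irreducible $J$-holomorphic curve $C_i\ne C$ one has $e\cdot[C_i]\ge 0$, while $e\cdot[C]=e\cdot e\ge 0$; hence $e\cdot e_\Theta\ge 0$ for every $J$-holomorphic subvariety $\Theta$, so $e$ is $J$-nef.

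With nefness in hand and $g_J(e)=0$, Corollary \ref{embsphere} and Theorem \ref{emb-comp} control every member of $\mathcal M_e$: since $J$ is tamed, each $\Theta$ is connected, its irreducible components are smooth rational curves forming a tree, and the reducible members obey $\sum_i m_i\,l_{e_{C_i}}\le l_e-1=e\cdot e$. This inequality is the key bookkeeping device: it forces each reducible stratum to have complex dimension strictly less than $l$, i.e. real codimension at least two, so the locus $\mathcal M_e^{\mathrm{emb}}$ of embedded spheres is open and dense. On $\mathcal M_e^{\mathrm{emb}}$ I would invoke automatic regularity for embedded $J$-holomorphic spheres of non-negative self-intersection (the linearized Cauchy--Riemann operator is automatically surjective), so that $\mathcal M_e^{\mathrm{emb}}$ is a smooth manifold carrying a natural complex structure of complex dimension $l$, its real dimension being $2(-K_J\cdot e)-2=2(e\cdot e+2)-2=2l$. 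Gromov compactness together with the structure theorem then shows $\mathcal M_e$ is compact and that the reducible locus $\mathcal M_e\setminus\mathcal M_e^{\mathrm{emb}}$ has real codimension at least two.

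The remaining and hardest step is to identify this compact complex $l$-manifold with $\mathbb{CP}^l$. I would attack it through the uniqueness-through-points principle that drives Theorems \ref{1E} and \ref{intro2}: the generalized positivity of intersection (Lemma 2.5 of \cite{Zmod}, Lemma 4.18 of \cite{LZ-generic}) shows that two distinct subvarieties in the nef class $e$ cannot meet in too many points, whence through $l$ points of $M$ in general position there is at most one member of $\mathcal M_e$. Combined with the Seiberg--Witten--Taubes count — on a rational surface the relevant Gromov invariant of $e$ with $l$ point constraints equals $1$ via the wall-crossing formula for $b^+=1$ \cite{LLwall}, where primitivity of $e$ is used to rule out multiply covered contributions — there is exactly one. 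This ``exactly one member through $l$ general points'' is precisely the defining incidence property of $\mathbb{CP}^l$, with the subvarieties through a fixed point playing the role of hyperplanes, and I would use it to construct a continuous bijection $\mathcal M_e\to\mathbb{CP}^l$; since both spaces are compact Hausdorff, such a bijection is automatically a homeomorphism. As a complementary route, one may instead deform $J$ within the connected space of tamed structures, along which $e$ stays nef and Gromov--Witten stable, and attempt to transport the classical identification $|C|\cong\mathbb{CP}^l$ from an integrable model.

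I expect the main obstacle to be exactly this last identification. Producing a compact complex $l$-fold is comparatively routine given the structure theorems, but in the almost complex world there is no honest linear system $\mathbb P(H^0(\mathcal O(C)))$ to invoke, so the projective structure must be reconstructed purely from incidence data. The delicate points are verifying that the ``unique member through $l$ general points'' assignment extends continuously across the reducible locus, where members through special configurations may degenerate into trees, and controlling the invariant count uniformly so that the incidence geometry genuinely matches that of $\mathbb{CP}^l$ rather than merely that of some compact $l$-fold with the same Betti numbers. The wall-crossing subtleties inherent to $b^+=1$ are precisely what make both the direct and the deformation approaches nontrivial here.
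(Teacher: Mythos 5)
Your toolkit is the one the paper itself points to for this family of results (nefness, the $J$-nef structure theorems \ref{genus bound}--\ref{emb-comp}, Seiberg--Witten--Taubes existence with the $b^+=1$ wall-crossing formula of \cite{LLwall}, and uniqueness through points via the generalized positivity of intersection of \cite{Zmod, LZ-generic}); the paper only sketches this machinery and defers the actual proof to \cite{Zmod} and \cite{CZ2}. Within that shared framework there are two genuine problems. First, in the degenerate case your claim that $g_J(e)=0$, $e\cdot e<0$ and primitivity force $e\cdot e=K_J\cdot e=-1$ is false: a primitive class represented by a smooth $J$-holomorphic $-2$-sphere (e.g.\ $E_1-E_2$ on $\mathbb CP^2\#2\overline{\mathbb CP^2}$) has $g_J=0$. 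Worse, Theorem \ref{1E} cannot be invoked here at all, since it explicitly excludes $M=\mathbb CP^2\# k\overline{\mathbb CP^2}$ --- precisely the rational surfaces of Theorem \ref{cpl} --- and the paper stresses that on rational surfaces subvarieties in exceptional classes can move in positive-dimensional families. The case $e\cdot e<0$ instead needs the direct argument: a subvariety $\Theta\ne C$ in class $e$ either does not contain $C$ as a component, whence $e\cdot e=[C]\cdot e_{\Theta}\ge 0$ by Theorem \ref{MicW}, a contradiction, or contains $C$ with some multiplicity $m$, and pairing $e-m[C]$ with a taming form forces $m=1$ and $\Theta=C$.

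Second, and more seriously, the heart of the theorem --- that the compact space you have assembled is $\mathbb CP^l$ rather than some other compact $l$-fold --- is not proved. The incidence property ``exactly one member through $l$ generic points'' is not a topological characterization of $\mathbb CP^l$, and it does not by itself produce a map $\mathcal M_e\to\mathbb CP^l$: it gives a correspondence from generic $l$-tuples of points of $M$ to $\mathcal M_e$, and extracting from this a continuous bijection in the other direction, in particular across the reducible locus where members degenerate to tree configurations and the point constraints cease to be transverse, is exactly where the work lies in \cite{Zmod}. There the identification is built up by slicing with point constraints, so that the members through a generic point form a ``hyperplane'' whose topology is controlled inductively, with the bound $\sum_i m_i l_{e_{C_i}}\le l_e-1$ of Theorem \ref{emb-comp} governing the strata. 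You have correctly located this difficulty, but locating it is not resolving it: as written, the proposal establishes only that $\mathcal M_e$ is a compact space of real dimension $2l$ with a dense open stratum of embedded spheres and a codimension-two reducible locus, which is consistent with many homeomorphism types besides $\mathbb CP^l$.
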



\begin{thebibliography}{99}



\bibitem{ALLP} S. Anjos, J. Li, T.J. Li, M. Pinsonnault, \textit{Stability of the symplectomorphism group of rational surfaces}, arXiv:1911.00961.



\bibitem{BCDH}  F. Bethuel, J.-M. Coron, F. Demengel, F. H\'elein, \textit{A cohomological criterion for density of smooth maps in Sobolev spaces between two manifolds}, Nematics (Orsay, 1990), 15--23, NATO Adv. Sci. Inst. Ser. C Math. Phys. Sci., 332, Kluwer Acad. Publ., Dordrecht, 1991.

\bibitem{Biq} O. Biquard, \textit{Les  \'equations de Seiberg-Witten sur une surface complexe non k\"ahl\'erienne}, (French) [Seiberg-Witten equations on a non-K\"ahler complex surface] Comm. Anal. Geom. 6 (1998), no. 1, 173--197.

\bibitem{Boa} J. M. Boardman, \textit{Singularities of differentiable maps}, Inst. Hautes \'Etudes Sci. Publ. Math. No. 33 (1967), 21--57.

\bibitem{Bon} L. Bonthrone, \textit{Applications of Pseudoholomorphic Subvarieties to $J$-Anti-invariant Forms and Spectral Geometry}, Warwick thesis 2020. 

\bibitem{BonZ} L. Bonthrone, W. Zhang, \textit{$J$-holomorphic curves from closed $J$-anti-invariant forms}, arXiv:1808.09356, Comm. Anal. Geom., to appear. 

\bibitem{BLY} J.-P. Bourguignon, P. Li, S.T. Yau, \textit{Upper bound for the first eigenvalue of algebraic submanifolds}, Comment. Math. Helv. 69 (1994), no. 2, 199--207.



\bibitem{Bus} O. Buse, \textit{Negative inflation and stability in symplectomorphism groups of ruled surfaces}, Journal of Symplectic Geometry, Vol. 9, No. 2, 147--160, 2011.

\bibitem{CNT}  A. Cattaneo, A. Nannicini, A. Tomassini, \textit{Kodaira dimension of almost K\"ahler manifolds and curvature of the canonical connection}, arXiv:1908.11328, Annali di Mat. Pura ed Appl., to appear.


\bibitem{CZ} H. Chen, W. Zhang, \textit{Kodaira dimensions of almost complex manifolds I},  arXiv:1808.00885.

\bibitem{CZ2}  H. Chen, W. Zhang, \textit{Kodaira dimensions of almost complex manifolds II}, arXiv:2004.12825.

\bibitem{WC}  W. Chen, \textit{Pseudoholomorphic curves in four-orbifolds and some applications}, Geometry and topology of manifolds, 11--37, Fields Inst. Commun., 47, Amer. Math. Soc., Providence, RI, 2005. 

\bibitem{WCs} W. Chen, \textit{Smooth s-cobordisms of elliptic 3-manifolds}, J. Differential Geom. 73 (2006), no. 3, 413--490.

\bibitem{CG} N. Chriss, V. Ginzburg, \textit{Representation theory and complex geometry}, Birkh\"auser Boston, Inc., Boston, MA, 1997. x+495 pp. ISBN: 0-8176-3792-3.

\bibitem{dBT} P. De Bartolomeis, G. Tian, \textit{Stability of complex vector bundles}, J. Differential Geom. 43 (1996), no. 2, 231--275. 

\bibitem{Don06} S. Donaldson, \textit{Two-forms on four-manifolds and elliptic equations}, Inspired by S. S. Chern, 153--172, Nankai Tracts Math., 11, World Sci. Publ., Hackensack, NJ, 2006. 

\bibitem{DLZ} T. Draghici, T.J. Li, W. Zhang, \textit{Symplectic forms and cohomology decomposition of almost complex $4$-manifolds}, Int. Math. Res. Not. IMRN 2010, no. 1, 1--17.

\bibitem{DLZ2} T. Draghici, T.J. Li, W. Zhang, \textit{On the J-anti-invariant cohomology of almost complex $4$-manifolds}, Q. J. Math. 64 (2013), no. 1, 83--111.

\bibitem{DLZiccm} T. Draghici, T.J. Li, W. Zhang, \textit{Geometry of tamed almost complex structures on $4$-dimensional manifolds}, Fifth International Congress of Chinese Mathematicians. Part 1, 2, 233--251, AMS/IP Stud. Adv. Math., 51, pt. 1, 2, Amer. Math. Soc., Providence, RI, 2012.

\bibitem{EGH} Y. Eliashberg, A. Givental, H. Hofer, \textit{Introduction to symplectic field theory}, GAFA 2000 (Tel Aviv, 1999). Geom. Funct. Anal. 2000, Special Volume, Part II, 560--673. 

\bibitem{ES} J.D. Evans, I. Smith, \textit{Bounds on Wahl singularities from symplectic topology},  Algebraic Geometry, (2020) 1(1) 59--85.

\bibitem{FQ}  R. Friedman, Z. Qin, \textit{On complex surfaces diffeomorphic to rational surfaces}, Invent. Math. 120 (1995), no. 1, 81--117.

\bibitem{Ful} W. Fulton, \textit{Intersection theory}, Second edition. Ergebnisse der Mathematik und ihrer Grenzgebiete. 3. Folge. A Series of Modern Surveys in Mathematics, 2. Springer-Verlag, Berlin, 1998. xiv+470 pp. ISBN: 3-540-62046-X; 0-387-98549-2.

\bibitem{Gau} P. Gauduchon, \textit{Appendix: The canonical almost complex structure on the manifold of $1$-jets of pseudo-holomorphic mappings between two almost complex manifolds}, In: Holomorphic curves in symplectic geometry (eds. M. Audin and J. Lafontaine), Birkh\"auser, Boston, 1994, 69--73.

\bibitem{Ge} C. Gerig, \textit{Taming the pseudoholomorphic beasts in $\mathbb R\times (S^1\times S^2)$}, preprint arXiv:1711.02069, Geom. Topol., to appear.
 
\bibitem{Gold} S.I. Goldberg, \textit{Integrability of almost K\"ahler manifolds}, Proc. Amer. Math. Soc., 21, 96--100 (1969). 
 


\bibitem{Gr} M. Gromov, \textit{Pseudoholomorphic curves in symplectic manifolds},  Invent. Math.  82  (1985),  no. 2, 307--347.



\bibitem{HT} R. Hind, A. Tomassini, \textit{On the Anti-Invariant Cohomology of Almost Complex Manifolds}, arXiv:1908.03016, J. Geom. Anal., to appear. 

  \bibitem {Hir} F. Hirzebruch, \textit{Some problems on differentiable and complex manifolds}, Ann. of Math. (2) 60, (1954), 213--236.
  
  \bibitem{HZ} T. Holt, W. Zhang, \textit{Harmonic Forms on the Kodaira-Thurston Manifold}, arXiv:2001.10962.

\bibitem{King} J. R. King, \textit{The currents defined by analytic varieties}, Acta Math. 127 (1971), 185--220.

\bibitem{Kok} G. Kokarev. \textit{Bounds for laplace eigenvalues of K\"ahler metrics}, arXiv:1801.02276, Adv. Math., to appear.



\bibitem{LS} L. Lempert,  R. Sz\"oke, \textit{The tangent bundle of an almost complex manifold},  Canad. Math. Bull. 44 (2001), no. 1, 70--79. 

\bibitem{Lpams} T.J. Li, \textit{Smoothly embedded spheres in symplectic $4$-manifolds}, Proc. Amer. Math. Soc. 127 (1999), no. 2, 609--613.

\bibitem{Ltj} T.J. Li, \textit{Symplectic 4-manifolds with Kodaira dimension zero}, J. Differential Geom. 74 (2006), no. 2, 321--352.


\bibitem{LLwall} T.J. Li, A.-K. Liu, \textit{General wall crossing formula}, Math. Res. Lett. 2 (1995), no. 6, 797--810.
\bibitem{LLb+1}  T.J. Li, A.-K. Liu, \textit{The equivalence between SW and Gr in the case where $b^+=1$}, Internat. Math. Res. Notices 1999, no. 7, 335--345.

\bibitem{LM} T.J. Li, C.Y. Mak, \textit{Symplectic Divisorial Capping in Dimension $4$}, J. Symplectic Geom. 17 (2019), no. 6, 1835--1852.

\bibitem{LR}  T.J. Li, Y. Ruan, \textit{Symplectic birational geometry}, New perspectives and challenges in symplectic field theory, 307--326, CRM Proc. Lecture Notes, 49, Amer. Math. Soc., Providence, RI, 2009. 

\bibitem{LZ} T.J. Li, W. Zhang, \textit{Comparing tamed and compatible
symplectic cones and cohomological properties of almost complex manifolds},
 Comm. Anal. Geom. 17 (2009), no. 4, 651--683.
 
 \bibitem{LZrc} T.J. Li, W. Zhang, \textit{$J$-holomorphic curves in a nef class}, Int. Math. Res. Not. IMRN 2015, no. 22, 12070--12104.
 
 
\bibitem{LZ-generic} T.J. Li, W. Zhang, \textit{Almost K\"ahler forms on rational $4$-manifolds}, Amer. J. Math. 137 (2015), no. 5, 1209--1256.
 
 \bibitem{LP} N. Lindsay, D. Panov, \textit{$S^1$-invariant symplectic hypersurfaces in dimension $6$ and the Fano condition},  J. Topol. 12 (2019), no. 1, 221--285.

\bibitem{McD90} D. McDuff, \textit{The structure of rational and ruled symplectic 4-manifolds}, J. Amer. Math. Soc. 3 (1990), no. 3, 679--712.


\bibitem{McD}  D. McDuff, \textit{Singularities and positivity of intersections of $J$-holomorphic curves}, With an appendix by Gang Liu. Progr. Math., 117, Holomorphic curves in symplectic geometry, 191--215, Birkh\"auser, Basel, 1994. 

\bibitem{Mc2} D. McDuff,  \textit{Symplectomorphism groups and almost complex structures}, Essays on geometry and related topics, Vol. 1, 2, 527-556, Monogr. Enseign. Math., 38, Enseignement Math., Geneva, 2001.



\bibitem{McW} J. McCarthy, J. Wolfson, \textit{Symplectic gluing along hypersurfaces and resolution of isolated orbifold singularities}, Invent. Math. 119 (1995), no. 1, 129--154. 

\bibitem{MW} M. Micallef, B. White, \textit{The structure of branch points in minimal surfaces and in pseudoholomorphic curves},
Ann. of Math. (2) 141 (1995), no. 1, 35--85. 

\bibitem{NW} A. Nijenhuis, W.B. Woolf, \textit{Some integration problems in almost-complex and complex manifolds}, Ann. of Math. (2) 77 (1963), 424--489.

\bibitem{RS} M. Raussen, C. Skau, \textit{Interview with Mikhail Gromov}, Notices Amer. Math. Soc. 57 (2010), no. 3, 391--403.

\bibitem{Rem}R. Remmert, \textit{Projektionen analytischer Mengen}, (German) Math. Ann. 130 (1956), 410--441.




\bibitem{Ru}  Y. Ruan, \textit{Surgery, quantum cohomology and birational geometry}, Northern California Symplectic Geometry Seminar, 183--198, Amer. Math. Soc. Transl. Ser. 2, 196, Adv. Math. Sci., 45, Amer. Math. Soc., Providence, RI, 1999.

\bibitem{SU}  J. Sacks, K. Uhlenbeck, \textit{The existence of minimal immersions of $2$-spheres}, Ann. of Math. (2) 113 (1981), no. 1, 1--24.

\bibitem{Sa1} A. Sard, \textit{The measure of the critical values of differentiable maps}, Bull. Amer. Math. Soc. 48 (1942), 883--890.

\bibitem{Sa2}  A. Sard, \textit{Hausdorff measure of critical images on Banach manifolds},  Amer. J. Math. 87 (1965), 158--174.

\bibitem{Sha} A. R. Shastri. \textit{Divisors with finite local fundamental group on a surface}, Algebraic geometry, Bowdoin, 1985 (Brunswick, Maine, 1985), 467--481, Proc. Sympos. Pure Math., 46, Part 1, Amer. Math. Soc., Providence, RI, 1987.




\bibitem{T} C. Taubes, \textit{${\rm SW}\Rightarrow{\rm Gr}$: from the Seiberg-Witten equations to pseudo-holomorphic curves},  J. Amer. Math. Soc.  9  (1996),  no. 3, 845--918.

\bibitem{Tsd} C. Taubes, \textit{Seiberg-Witten invariants and pseudo-holomorphic subvarieties for self-dual, harmonic $2$-forms}, Geom. Topol. 3 (1999), 167--210.

\bibitem{Thwz} C. Taubes, \textit{Seiberg-Witten invariants, self-dual harmonic $2$-forms and the Hofer-Wysocki-Zehnder formalism}, Surv.
Diff. Geom. VII (2000), 625--672.

\bibitem{Ticm} C. Taubes, \textit{The geometry of the Seiberg-Witten invariants}, Proceedings of the International Congress of Mathematicians, Vol. II (Berlin, 1998). Doc. Math. 1998, Extra Vol. II, 493--504. 

\bibitem{T1} C. Taubes, \textit{Tamed to compatible: Symplectic forms via
moduli space integration}, J. Symplectic Geom. 9 (2011), 161--250.

\bibitem{Th} R. Thom, \textit{Les singularit\'es des applications diff\'erentiables}, (French) Ann. Inst. Fourier, Grenoble 6 (1955--1956), 43--87.



\bibitem{Yau}  S.T. Yau, \textit{Open problems in geometry}, Differential geometry: partial differential equations on manifolds (Los Angeles, CA, 1990), 1--28, Proc. Sympos. Pure Math., 54, Part 1, Amer. Math. Soc., Providence, RI, 1993.

\bibitem{p=h} W. Zhang, \textit{The curve cone of almost complex $4$-manifolds},   Proc. Lond. Math. Soc. (3) 115 (2017), no. 6, 1227--1275. 

\bibitem{Zadv} W. Zhang, \textit{Geometric structures, Gromov norm and Kodaira dimensions}, Adv. Math. 308 (2017), 1--35.

\bibitem{ZCJM} W. Zhang, \textit{Intersection of almost complex submanifolds}, Camb. J. Math. 6 (2018), no. 4, 451--496.

\bibitem{Zmod} W. Zhang, \textit{Moduli space of J-holomorphic subvarieties}, arXiv:1601.07855.
\end{thebibliography}
\end{document}